\newtheorem {theorem}    {Theorem}[section]
\newtheorem {lemma}      [theorem]    {Lemma}
\newtheorem {corollary}  [theorem]    {Corollary}
\newtheorem {proposition}[theorem]    {Proposition}
\newcommand{\bb}{\mathbb}
\renewcommand{\rm}{\mathrm}
\newcommand{\GG}{\mathrm{G}}
\newcommand{\GGL}{\mathrm{GL}}
\newcommand{\UU}{\mathrm{U}}
\newcommand{\Fq}{\bb{F}_q}
\newcommand{\fq}{(\bb{F}_q)}
\theoremstyle{definition}
\newcommand{\CD}{{\mathcal{D}}}
\newcommand{\CQ}{{\mathcal{Q}}}
\newcommand{\CJ}{{\mathcal {J}}}
\newcommand{\UUl}{\underline}
\numberwithin{equation}{section}
\begin{document}

\title{Descents of unipotent representations of finite unitary groups}

\date{\today}

\author[Dongwen Liu]{Dongwen Liu}

\address{School of Mathematical Science, Zhejiang University, Hangzhou 310027, Zhejiang, P.R. China}

\email{maliu@zju.edu.cn}

\author[Zhicheng Wang]{Zhicheng Wang}

\address{School of Mathematical Science, Zhejiang University, Hangzhou 310027, Zhejiang, P.R. China}

\email{11735009@zju.edu.cn}
\subjclass[2010]{Primary 20C33; Secondary 22E50}

\begin{abstract}
Inspired by the Gan-Gross-Prasad conjecture and the descent problem for classical groups, in this paper we study the descents of unipotent representations of unitary groups over finite fields. We give the first descents of unipotent representations explicitly, which are unipotent as well. Our results include both the Bessel case and Fourier-Jacobi case, which are related via theta correspondence.
\end{abstract}

\maketitle

\section{Introduction}

In representation theory, a classical problem is to look for the spectral decomposition
of a representation $\pi$ of a group $G$ restricted to a subgroup $H$. Namely, one asks for which representation $\sigma$ of $H$ has the property that
\[
\rm{Hom}_H(\pi,\sigma)\neq 0,
\]
and what the dimension of this Hom-space is. In general such a restriction problem is hard and may not have reasonable answers. However when $G$ is
a classical group defined over a local field and $\pi$ belongs to a generic Vogan $L$-packet, the local Gan-Gross-Prasad conjecture \cite{GP1, GP2, GGP1} provides explicit answers and is one of the most successful examples concerning with those general questions. To be a little more precise, the multiplicity one property holds for this situation, namely
\[
m(\pi,\sigma):=\dim\rm{Hom}_H(\pi,\sigma)\leq 1,
\]
and the invariants attached to $\pi$ and $\sigma$ that detect the multiplicity $m(\pi,\sigma)$ is the local root number associated to their Langlands parameters.  In the $p$-adic case, the local Gan-Gross-Prasad conjecture  has been resolved by J.-L. Waldspurger and C. M\oe glin and J.-L. Waldspurger \cite{W1, W2, W3, MW} for orthogonal groups, by R. Beuzart-Plessis \cite{BP1, BP2} and W. T. Gan and A. Ichino \cite{GI} for unitary groups, and by H. Atobe \cite{Ato} for symplectic-metaplectic groups. On the other hand, D. Jiang and L. Zhang \cite{JZ1} study the local descents for $p$-adic orthogonal groups, whose results can be viewed as a refinement of the local Gan-Gross-Prasad conjecture, and the descent method has important applications towards the global problem (see \cite{JZ2}).

Motivated by the above works, in this paper which is the first one of a series, we will study the descent problem for unipotent representations of finite unitary groups. To begin with, we first set up some notations. Let $\overline{\mathbb{F}}_q$ be an algebraic closure of a finite field $\mathbb{F}_q$, which is of characteristic $p>2$. Let $G=\UU_n$ be an $\bb{F}_q$-rational form of $\GGL_n(\overline{\mathbb{F}}_q)$, and $F$ be the corresponding Frobenius endomorphism, so that the group of $\Fq$-rational points  is
$G^F=\UU_n(\Fq)$. Let $Z$ be the center of $G^F$. We  assume that $q$ is large enough such that the main theorem in \cite{S2} holds, namely assume that
 \begin{itemize}
 \item $q$ is large enough such that $T^F/Z$ has at least two Weyl group orbits of regular characters, for every $F$-stable maximal torus $T$ of $G$.
 \end{itemize}
 However, this assumption will be only necessary for the Fourier-Jacobi case of our main result, which will be proven using theta correspondence.
For an $F$-stable maximal torus $T$ of $G$ and a character $\theta$ of $T^F$,  let $R_{T,\theta}^G$ be the virtual character of $G^F$ defined by P. Deligne and G. Lusztig in their seminal work \cite{DL}. An irreducible representation $\pi$ is called unipotent if there is an $F$-stable maximal torus $T$ of $G$ such that $\pi$ appears in $R_{T,1}^G$. For two representations $\pi$ and $\pi'$ of a  finite group $H$, define
\[
\langle \pi,\pi'\rangle_H:=\dim \mathrm{Hom}_H(\pi,\pi').
\]

Let $\pi$ and $\pi'$ be  irreducible representations of $\UU_{n}(\Fq)$ and $\UU_{m}(\Fq)$ respectively, where $n\ge m$.
The Gan-Gross-Prasad conjecture is concerned with the multiplicity
\[
m(\pi, \pi'):=\langle \pi\otimes\bar{\nu},\pi' \rangle_{H(\Fq)} = \dim \mathrm{Hom}_{H(\Fq)}(\pi\otimes\bar{\nu},\pi' )
\]
where the data $(H, \nu)$ is defined as in \cite[Theorem 15.1]{GGP1}, and will be explained in details shortly. According to whether $n-m$ is odd or even, the above-Hom space is called the Bessel model or Fourier-Jacobi model. In \cite[Proposition 5.3]{GGP2}, W. T. Gan, B. H. Gross and D. Prasad showed that if both $\pi$ and $\pi'$ are cuspidal, then
\[
m(\pi,\pi') \le 1.
\]
We should mention that our formulation of multiplicities differs slightly from that in the Gan-Gross-Prasad conjecture \cite{GGP1}, up to taking the contragradient of $\pi'$. This is more suitable for the purpose of descents, which will be clear from the discussion below. On the other hand, in this paper we will restrict our attention to unipotent representations of $\UU_{n}(\Fq)$, which are self-dual (see Proposition \ref{7.4}) and thus for $\pi$ unipotent the above Hom-space coincides with $\rm{Hom}_{H(\Fq)}(\pi\otimes\pi',\nu)$.

Roughly speaking,  for fixed $\UU_n(\Fq)$ and its representation $\pi$,  the descent problem seeks the smallest member $\UU_m(\Fq)$ among a Witt tower which has an irreducible representation $\pi'$ satisfying $m(\pi, \pi')\neq 0$, and all such $\pi'$ give the first descent of $\pi$. To give the precise notion of descent, let us sketch the definition of the data $(H,\nu)$ following \cite{GGP1} and \cite{JZ1}.

We first consider the Bessel model. Let $V_n$ be an $n$-dimensional space over $\mathbb{F}_{q^2}$ with a nondegenerate Hermitian form $(,)$, which defines the unitary group $\UU_n(\Fq)$. Consider pairs of Hermitian spaces $V_n\supset V_{n-2\ell-1}$ and the following partitions of $n$,
\begin{equation}\label{pl}
\UUl{p}_\ell=[2\ell+1, 1^{n-2\ell-1}],\quad 0\leq \ell<n/2.
\end{equation}
Assume that  $V_n$ has a decomposition
\[
V_n=X+V_{n-2\ell}+X^\vee
\]
where $X+X^\vee=V_{n-2\ell}^\perp$ is a polarization. Let $\{e_1,\ldots, e_\ell\}$ be a basis of $X$, $\{e_1',\ldots, e_\ell'\}$ be the dual basis of $X^\vee$, and
let $X_i=\rm{Span}_{\mathbb{F}_{q^2}}\{e_1,\ldots, e_i\}$, $i=1,\ldots, \ell$. Let $P$ be the parabolic subgroup of $\UU_n$ stabilizing the flag
\[
X_1\subset\cdots\subset X_\ell,
\]
so that its Levi component is $M\cong \rm{Res}_{\mathbb{F}_{q^2}/\Fq}\GGL_1^\ell\times \rm{U}_{n-2\ell}$. Its unipotent radical can be written in the form
\[
N_{\UUl{p}_\ell}=\left\{n=
\begin{pmatrix}
z & y & x\\
0 & I_{n-2\ell} & y'\\
0 & 0 & z^*
\end{pmatrix}
: z\in U_{\GG_\ell}
\right\},
\]
where the superscript ${}^*$ means the conjugate transpose, and $U_{\GG_\ell}$ is the subgroup of unipotent upper triangular matrices of $\GG_\ell:=\rm{Res}_{\mathbb{F}_{q^2}/\Fq}\GGL_\ell$.
Fix a nontrivial additive character $\psi_{\mathbb{F}_{q^2}}$ of $\mathbb{F}_{q^2}$. Pick up an anisotropic vector $v_0\in V_{n-2\ell}$ and define a generic character $\psi_{\UUl{p}_\ell, v_0}$ of $N_{\UUl{p}_\ell}(\Fq)$ by
\[
\psi_{\UUl{p}_\ell, v_0}(n)=\psi_{\mathbb{F}_{q^2}}\left(\sum^{\ell-1}_{i=1}z_{i,i+1}+(y_\ell, v_0)\right), \quad n\in N_{\UUl{p}_\ell}(\Fq),
\]
where $y_\ell$ is the last row of $y$. The stabilizer of $\psi_{\UUl{p}_\ell, v_0}$ in $M(\Fq)$ is  the unitary group of the orthogonal complement of $v_0$ in $V_{n-2\ell}$, which will be identified with $\UU_{n-2\ell-1}(\Fq)$.
Put
\begin{equation}\label{hnu}
H=\rm{U}_{n-2\ell-1}\ltimes N_{\UUl{p}_\ell},\quad \nu=\psi_{\UUl{p}_\ell, v_0}.
\end{equation}
For irreducible representations $\pi$ and $\pi'$ of $\rm{U}_n(\Fq)$ and $\rm{U}_{n-2\ell-1}(\Fq)$ respectively, the uniqueness of
{\sl Bessel models} asserts that
\[
m(\pi,\pi')=\dim \rm{Hom}_{H(\Fq)}(\pi\otimes\bar{\nu}, \pi')\leq 1.
\]
 This was proved over $p$-adic local fields in \cite{AGRS}, and for cupsidal representations over finite fields in \cite{GGP2}. It is clear that
\[
\rm{Hom}_{H(\Fq)}(\pi\otimes\bar{\nu},\pi')
\cong
\rm{Hom}_{\rm{U}_{n-2\ell-1}(\Fq)}(\CJ_{\ell}(\pi),\pi'),
\]
where $\CJ_\ell(\pi)$ is the twisted Jacquet module of $\pi$ with respect to $(N_{\UUl{p}_\ell}(\Fq), \psi_{\UUl{p}_{\ell}, v_0})$.
We simply define the notion of the $\ell$-th {\sl Bessel quotient} of $\pi$ by
\begin{equation}\label{lbd}
\CQ^\rm{B}_{\ell}(\pi):=\CJ_{\ell}(\pi),
\end{equation}
viewed as a representation of $\UU_{n-2\ell-1}(\Fq)$. Define the {\sl first occurrence index} $\ell_0:=\ell_0^\rm{B}(\pi)$ of $\pi$ in the Bessel case to be the largest nonnegative integer $\ell_0<n/2$ such that $\CQ^\rm{B}_{\ell_0}(\pi)\neq 0$. The $\ell_0$-th Bessel descent of $\pi$ is called the {\sl first Bessel descent} of $\pi$ or simply the {\sl Bessel descent} of $\pi$, denoted
\begin{equation}\label{bd}
\CD^\rm{B}_{\ell_0}(\pi):=\CQ^\rm{B}_{\ell_0}(\pi).
\end{equation}

We next turn to the Fourier-Jacobi case.  In the sequel we keep all the previous notations in the Bessel case, but view $V_n$ as a skew-Hermitian space by abuse of notation, which gives the unitary group  $\UU_n(\Fq)$ (up to isomorphism)  as well. Consider pairs of skew-Hermitian spaces $V_n\supset V_{n-2\ell}$ and partitions
\begin{equation}\label{pl'}
\UUl{p}'_\ell=[2\ell, 1^{n-2\ell}],\quad 0\leq \ell\leq n/2.
\end{equation}
Note that if we let $P_\ell$ be the parabolic subgroup of $\UU_n$ stabilizing $X_\ell$ and let $N_\ell$ be its unipotent radical, then $N_{\UUl{p}_\ell}=U_{\GG_\ell}\ltimes N_\ell$. Fix a nontrivial additive character $\psi$ of $\Fq$, and let $\omega_\psi$ be the Weil representation (see \cite{Ger}) of $\UU_{n-2\ell}(\Fq)\ltimes \mathcal{H}_{n-2\ell}$, where $\mathcal{H}_{n-2\ell}$ is the Heisenberg group of $V_{n-2\ell}$. Roughly speaking, there is a natural homomorphism $N_\ell(\Fq)\to \mathcal{H}_{n-2\ell}$ invariant under the conjugation action of $U_{\GG_\ell}(\Fq)$ on $N_\ell(\Fq)$, which enables us to view $\omega_\psi$ as a representation of $\UU_{n-2\ell}(\Fq)\ltimes N_{\UUl{p}_\ell}(\Fq)$. Let $\psi_\ell$ be the character of $U_{\GG_\ell}(\Fq)$ given by
\[
\psi_\ell(z)=\psi\circ\rm{Tr}_{\mathbb{F}_{q^2}/\Fq}\left(\sum^{\ell-1}_{i=1}z_{i,i+1}\right),\quad z\in U_{\GG_\ell}(\Fq).
\]
For the Fourier-Jacobi case, put
\begin{equation}\label{hnu'}
H=\UU_{n-2\ell}\ltimes N_{\UUl{p}_\ell},\quad \nu=\omega_\psi\otimes\psi_\ell.
\end{equation}
For irreducible representations $\pi$ and $\pi'$ of $\rm{U}_n(\Fq)$ and $\rm{U}_{n-2\ell}(\Fq)$ respectively, the uniqueness of {\sl Fourier-Jacobi models} asserts that
 \[
m(\pi,\pi'):=\rm{Hom}_{H(\Fq)}(\pi\otimes \bar{\nu}, \pi')\leq 1.
 \]
This was proved over $p$-adic local fields in \cite{Su}. We observe that
\[
\rm{Hom}_{H(\Fq)}(\pi\otimes \bar{\nu}, \pi')\cong \rm{Hom}_{\UU_{n-2\ell}(\Fq)}(\CJ'_\ell(\pi\otimes\bar{\omega}_\psi), \pi'),
\]
where $\CJ'_\ell(\pi\otimes\bar{\omega})$ is the twisted Jacquet module of $\pi\otimes\bar{\omega}_\psi$ with respect to $(N_{\UUl{p}_\ell}(\Fq), \psi_\ell)$. Define the $\ell$-th {\sl Fourier-Jacobi quotient} of $\pi$ to be
\begin{equation}\label{lfd}
\CQ_\ell^\rm{FJ}(\pi):=\CJ'_\ell(\pi\otimes\bar{\omega}),
\end{equation}
viewed as a representation of $\UU_{n-2\ell}(\Fq)$. Define the {\sl first occurrence index} $\ell_0:=\ell_0^\rm{FJ}(\pi)$ of $\pi$ in the Fourier-Jacobi case to be the largest nonnegative integer $\ell_0\leq n/2$ such that $\CQ^\rm{FJ}_{\ell_0}(\pi)\neq 0$. The $\ell_0$-th Fourier-Jacobi descent of $\pi$ is called the {\sl first Fourier-Jacobi descent} of $\pi$ or simply the {\sl Fourier-Jacobi descent} of $\pi$, denoted
\begin{equation}\label{bd}
\CD^\rm{FJ}_{\ell_0}(\pi):=\CQ^\rm{FJ}_{\ell_0}(\pi).
\end{equation}

Recall from \cite{LS} that irreducible unipotent representations of $\UU_n(\Fq)$ are parameterized by irreducible representations of $S_n$. It is well-known that  the latter are  parameterized by partitions of $n$.   For a partition $\lambda$ of $n$, denote by $\pi_\lambda$ the  corresponding unipotent representation of $\UU_n(\Fq)$. As is standard, we realize partitions  as Young diagrams. Then our main result is the following.

\begin{theorem}\label{main}
Let $\lambda$ be a partition of $n$ into $k$ rows, and $\lambda'$ be the partition of $n-k$ obtained by removing the first column of $\lambda$. Then the following hold.

(i) $\ell_0^\rm{B}(\pi_\lambda)\leq \frac{k-1}{2}$, with equality hold if $k$ is odd, in which case
$
\CD^\rm{B}_{\ell_0}(\pi_\lambda)=\pi_{\lambda'}.
$

(ii) $\ell_0^\rm{FJ}(\pi_\lambda)\leq \frac{k}{2}$, with equality hold if $k$ is even, in which case
$
\CD^\rm{FJ}_{\ell_0}(\pi_\lambda)=\pi_{\lambda'}.
$
\end{theorem}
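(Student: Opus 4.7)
The plan is to prove part (i), the Bessel case, directly by combining the theory of twisted Jacquet modules with Kawanaka's generalized Gelfand--Graev representations and Deligne--Lusztig character theory, and then to deduce part (ii), the Fourier--Jacobi case, via theta correspondence (this is what forces the auxiliary hypothesis on $q$).

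For the Bessel case, the argument has three stages. First, to establish the upper bound $\ell_0^\rm{B}(\pi_\lambda)\leq (k-1)/2$, I would show that $\CJ_\ell(\pi_\lambda)=0$ as soon as $2\ell+1>k$. The idea is that $\CJ_\ell(\pi_\lambda)\neq 0$ forces the nilpotent orbit labelled by $\UUl{p}_\ell=[2\ell+1,1^{n-2\ell-1}]$ to lie in the wavefront set (or Kawanaka support) of $\pi_\lambda$, which for a unipotent representation is controlled by $\lambda$ via Lusztig--Spaltenstein duality. Since the first column of $\lambda$ has length $k$, the corresponding wavefront partition has first part at most $k$, giving the required bound.

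Second, assuming $k$ is odd, I would establish the non-vanishing of $\CQ^\rm{B}_{(k-1)/2}(\pi_\lambda)$ together with the identification of its Bessel descent. The natural route is to compute the multiplicity
\[
\langle \pi_\lambda \otimes \bar{\nu},\ \pi_{\lambda'} \rangle_{H(\Fq)}
\]
directly: expressing both $\pi_\lambda$ and $\pi_{\lambda'}$ as explicit linear combinations of Deligne--Lusztig characters $R_{T_w,1}^G$ indexed by Weyl elements, and using the stabiliser description of $\psi_{\UUl{p}_\ell,v_0}$, the pairing reduces to a Kostka-type sum which should evaluate to $1$. Combined with the multiplicity-one property of Bessel models, this forces $\pi_{\lambda'}$ to occur in $\CD^\rm{B}_{\ell_0}(\pi_\lambda)$ with multiplicity exactly one; ruling out any additional irreducible constituent is then handled by a dimension or character bound using Lusztig's formulas for the degrees of unipotent representations. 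This identification step, where a priori the Jacquet module could admit further unipotent or non-unipotent constituents, is the most delicate part of the argument and the main obstacle.

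Third, the Fourier--Jacobi case is obtained by transferring through the theta correspondence for the dual pair $\UU_n\times \UU_m$. Over finite fields, the theta lift is known to send unipotent representations to unipotent representations with explicitly described partition-data changes, and Fourier--Jacobi models translate into Bessel models on the theta lift. Applying part (i) to the Bessel descent of the theta lift of $\pi_\lambda$ and translating back through these combinatorics should yield part (ii), with the parity of $k$ exactly matching so that $\pi_{\lambda'}$ appears as the first Fourier--Jacobi descent when $k$ is even.
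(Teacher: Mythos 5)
Your outline for the Fourier--Jacobi case (stage three) coincides with the paper's: see-saw dual pairs plus the explicit unipotent theta correspondence of Aubert--Michel--Rouquier, together with compatibility of theta lifting with parabolic induction. The substance of the theorem, however, lies in the Bessel case, and there your plan has two genuine gaps. First, the upper bound $\ell_0^{\rm B}(\pi_\lambda)\le\frac{k-1}{2}$ is not obtainable by the route you sketch: the implication ``$\CJ_\ell(\pi_\lambda)\neq 0$ forces the orbit $[2\ell+1,1^{n-2\ell-1}]$ to be dominated by the wavefront set of $\pi_\lambda$'', together with the identification of that wavefront set for unipotent representations of $\UU_n(\Fq)$ (with the correct transpose convention --- note the paper's parametrization differs by transpose from that of Adams--Moy), is exactly the kind of statement this paper treats as a \emph{consequence} of the descent theorem, not an input; invoking it here is circular, or at best an appeal to unproved results of comparable depth. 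In the paper the bound is not proved separately at all: it falls out of the same multiplicity computation that identifies the descent, namely the vanishing $m(\pi_\lambda,\pi')=0$ for every irreducible $\pi'$ of $\UU_m(\Fq)$ with $m<n-k$ (Theorem \ref{th1}(i), via Proposition \ref{prop6.9}(i)).

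Second, your central computation is not substantiated. The pairing $\langle\pi_\lambda\otimes\bar\nu,\pi_{\lambda'}\rangle_{H(\Fq)}$ involves the character $\psi_{\UUl{p}_\ell,v_0}$ on the unipotent radical $N_{\UUl{p}_\ell}(\Fq)$, and expanding both sides in Deligne--Lusztig characters does not by itself produce a ``Kostka-type sum'': some mechanism is needed to eliminate the twisted Jacquet functor. The paper's mechanism is Proposition \ref{prop6.2} (a finite-field, unipotent analog of GGP Theorem 15.1), which converts the Bessel multiplicity into the untwisted branching multiplicity $\langle I^{\UU_{n+1}}_{P}(\tau\otimes\pi'),\pi_\lambda\rangle_{\UU_n(\Fq)}$ with $\tau$ cuspidal and non-unipotent; only then does character theory take over, via the extension of Reeder's restriction formula for $\langle R^G_{T,\chi},R^H_{S,\eta}\rangle_{H^F}$ from simple groups to the pair $(\UU_{n+1},\UU_n)$ (Proposition \ref{proposition:A}) and the combinatorial evaluation in Propositions \ref{prop6.5} and \ref{prop6.8}. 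Your proposal contains no substitute for either ingredient. Finally, the step you yourself flag as the main obstacle --- showing the $\ell_0$-th Bessel quotient contains nothing besides $\pi_{\lambda'}$, in particular no non-unipotent constituents --- cannot be settled by ``a dimension or character bound using Lusztig's degree formulas''; in the paper it is handled inside the same framework, by writing a non-unipotent $\pi'$ as a combination of $R_{T,\theta}$ whose non-trivial part of $\theta$ enlarges, and observing that all such terms pair to zero (second half of the proof of Proposition \ref{prop6.9}(ii)). As it stands, then, parts one and two of your plan do not constitute a proof of Theorem \ref{main}(i), and part three, while correct in outline, depends on (i).
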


Our result does not address the Bessel descent of $\pi_\lambda$ for $k$ even, nor the Fourier-Jacobi descent for $k$ odd. However, it is sufficient for some further applications (e.g. towards the wavefront set, cf. \cite[Conjecture 1.8]{JZ1}), noting that
\[
k=\max\{2\ell_0^\rm{B}(\pi_\lambda)+1, 2\ell_0^\rm{FJ}(\pi_\lambda)\}.
\]
As a special case, recall that $\UU_n(\Fq)$, $n=k(k+1)/2$ are the only unitary groups that possess unipotent cuspidal representations, and each of them has a unique unipotent cuspidal representation $\pi_k$, which corresponds to the partition $[k, k-1,\ldots, 1]$. Then we obtain the following immediate consequence, which was communicated to us by L. Zhang.

\begin{corollary} Let $\pi_k$ be the unique unipotent cuspidal representation of $\UU_n(\Fq)$, $n=k(k+1)/2$. Then the following hold.

(i) If $k$ is odd, then $\ell_0^\rm{B}(\pi_k)=\frac{k-1}{2}$ and  $\CD^\rm{B}_{\ell_0}(\pi_k)=\pi_{k-1}.$

(ii) If $k$ is even, then $\ell_0^\rm{FJ}(\pi_k)=\frac{k}{2}$ and $\CD^\rm{FJ}_{\ell_0}(\pi_k)=\pi_{k-1}.$
\end{corollary}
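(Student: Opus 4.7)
The plan is to deduce the corollary as an immediate specialization of Theorem \ref{main} to the staircase partition that parametrizes the unipotent cuspidal representation. The unique unipotent cuspidal $\pi_k$ of $\UU_n(\Fq)$ with $n = k(k+1)/2$ corresponds, under the parametrization recalled just before the main theorem, to the staircase partition $\lambda = [k, k-1, \ldots, 2, 1]$, which has exactly $k$ rows. So the hypotheses of Theorem \ref{main} apply with the row count of $\lambda$ equal to $k$, and the parity dichotomy of the corollary matches exactly the dichotomy of the main theorem.

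The key combinatorial observation is that removing the first column of the staircase $[k, k-1, \ldots, 1]$ strips off one box from each of its $k$ rows and produces the smaller staircase $\lambda' = [k-1, k-2, \ldots, 1]$, which is a partition of $n - k = k(k-1)/2$. This is precisely the staircase shape parametrizing the unique unipotent cuspidal representation $\pi_{k-1}$ of $\UU_{k(k-1)/2}(\Fq)$, so under the parametrization $\pi_{\lambda'} = \pi_{k-1}$.

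With $\lambda$ and $\lambda'$ thus identified, the corollary falls out case by case. If $k$ is odd, part (i) of Theorem \ref{main} gives $\ell_0^\rm{B}(\pi_k) \le (k-1)/2$ with equality, together with $\CD^\rm{B}_{\ell_0}(\pi_k) = \pi_{\lambda'} = \pi_{k-1}$. If $k$ is even, part (ii) of the theorem gives $\ell_0^\rm{FJ}(\pi_k) \le k/2$ with equality, together with $\CD^\rm{FJ}_{\ell_0}(\pi_k) = \pi_{\lambda'} = \pi_{k-1}$.

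There is no serious obstacle to overcome: the corollary is a direct specialization of Theorem \ref{main}, and all that is required is the elementary identification of $\pi_k$ with the staircase partition and the observation that its first column has exactly $k$ boxes, one in each row. This is presumably why the statement is recorded as a corollary rather than a separate theorem.
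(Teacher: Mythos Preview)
Your proposal is correct and matches the paper's own treatment: the corollary is stated there as an immediate consequence of Theorem \ref{main} with no separate proof, and your argument supplies exactly the routine verification (identifying $\pi_k$ with the staircase partition $[k,k-1,\ldots,1]$, noting it has $k$ rows, and observing that removing the first column yields the staircase for $\pi_{k-1}$) that the paper leaves implicit.
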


We will prove the equivalent forms of Theorem \ref{main} for the Bessel case and Fourier-Jacobi cases in Theorem \ref{th1} and Theorem \ref{th2} respectively.
Let us outline the strategy of the proof. First of all, Proposition \ref{prop6.2} and Proposition \ref{7.3} show that parabolic induction preserves multiplicities, which are finite field analogs of Theorem 15.1 and Theorem 16.1 in \cite{GGP1} respectively for unipotent representations. This reduces the calculation to the basic case. For Bessel models,  in order to compute the right hand side of the equality
\[
m(\pi,\pi')=\langle \pi\otimes\bar{\nu}, \pi'\rangle _{H(\Fq)}=\langle I^{\UU_{n+1}}_{P}(\tau\otimes\pi'),\pi\rangle _{\UU_{n}(\Fq)}
\]
in Proposition \ref{prop6.2}, we first extend Reeder's multiplicity formula in \cite{R} for Deligne-Lusztig representations from connected simple algebraic groups to unitary groups.  This is our main tool. However, explicit calculation with Reeder's formula is still quite involved. It will be accomplished in Section \ref{sec5}, which is the most technical part of this paper. Following  \cite{GI, Ato}, the Bessel case and Fourier-Jacobi case of the Gan-Gross-Prasad conjecture are related via theta correspondence.  By see-saw arguments, we are able to resolve the descent problem for  the latter from the former,  using explicit theta correspondence between unipotent representations given in \cite{AMR}.

A few remarks are in order.

\begin{itemize}

\item A further delicate application of theta correspondence actually resolves the Gan-Gross-Prasad conjecture for unipotent representations of finite unitary groups.  We have decided to postpone this part to a subsequent paper due to its different flavor.  In another paper we will also study the Gan-Gross-Prasad conjecture and descent problem for unipotent cuspidal representations of finite orthogonal and symplectic groups. Overall, theta correspondence and Reeder's formula will be our main tools.

\item Using the method developed in \cite{GRS}, by composition of descents one should be able to read the wavefront set of representations of finite classical groups (cf. \cite[Conjecture 1.8]{JZ1} for the $p$-adic case). For unitary groups, the nilpotent orbits therein can be recovered from the largest parts of $\UUl{p}_\ell$ and $\UUl{p}_\ell'$ given by (\ref{pl}) and (\ref{pl'}) respectively at first occurrence indices of the descents. For instance, the wavefront set of a unipotent representation $\pi_\lambda$ of $\UU_n(\Fq)$ is the singleton $\{\lambda\}$.

\item In an ongoing joint work of the first named author with D. Jiang and L. Zhang, local descents are established for classical groups over local fields (cf. \cite{JLZ}). In the $p$-adic case the spectral decomposition has not been fully understood yet. One ends up with a multiplicity free direct sum of square integrable representations whose Langlands parameters are not completely explicit. On the other hand, supercuspidal representations are compactly induced from certain cuspidal datum (see e.g. \cite{Y}), which involve representations of finite Lie groups. Our results should have applications towards explicit local descents of supercuspidal representations.
\end{itemize}

We hope to address the above issues in our future works. This paper is organized as follows. In Section \ref{sec2} we recall the description of $F$-stable maximal tori, and introduce the map from the set of $G^F$-conjugacy classes of $F$-stable maximal tori of $G$ to that of a subgroup of $G$ which we need in Reeder's formula. In Section \ref{sec3}, we briefly recall the theory of Deligne-Lusztig characters.  In Section \ref{sec4} we extend Reeder's formula to the case that $G^F=\UU_n(\Fq)$. In Section \ref{sec5} we compute the multiplicities of Deligne-Lusztig representations of $\UU_n(\Fq)$, and thereby prove the Bessel case of our main theorem using the results in Section \ref{sec4}. In Section \ref{sec6} we deduce the Fourier-Jacobi case from the Bessel case using the standard method of theta correspondence and see-saw dual pair.

\subsection*{Acknowledgement} This paper was inspired by the first named author's joint work with Dihua Jiang and Lei Zhang on the local descent. He would like to thank the latter for suggesting the descent problem over finite fields, and informing the result in the special case of unipotent cuspidal representations. He also thanks Jiajun Ma for several helpful discussions about representation theory of finite Lie groups.

\section{Remarks on maximal tori} \label{sec2}

The aim of this section is to recall the map $j_{G_s}$ used in the proof of Reeder's multiplicity formula \cite{R}.  Moreover we will calculate the map $j_{G_s}$ for $G=\UU_n$, and show that it is injective  for certain semisimple elements $s\in G^F$ of our concern.

\subsection{Construction of $j_{G_s}$}
Let $G$ be a connected reductive algebraic group over $\Fq$, $F$ be the corresponding Frobenius endomorphism, and $G^F$ be the group of rational points.  Let $T$ be an $F$-stable maximal torus in $G$. Denote its normalizer in $G$ by $N_G(T)$ and the Weyl group $W_G(T) = N_G(T)/T$. Then
$W_G(T)^F = N_G(T)^F/T^F$
by the Lang-Steinberg theorem.

Recall the classification of $F$-stable maximal tori in $G$. Fix an $F$-stable maximal torus $T_0$ in $G$ which is contained in an $F$-stable Borel subgroup of $G$, and for abbreviation write $N_G = N_G(T_0)$, $W_G = W_G(T_0)$. For any $F$-stable maximal torus $T$, there is $g\in G$ such that $^gT=T_0$. Since $T$ is $F$-stable, we have $gF(g^{-1})\in N_G$. If $w$ is the image of  $gF(g^{-1})$ in $W_G$, then we denote $T$ by $T_w$.

For a finite group $A$ with  an $F$-action,  $H^1(F, A)$ denotes the set of $F$-conjugacy classes in $A$. For $a, b\in A$, we say that $a$ is $F$-conjugate to $b$ if there is $c\in A$ such that $caF(c^{-1})=b$. Let $[a] \in H^1(F, A)$ denote the $F$-conjugacy class of an element $a \in A$.

If $w:=gF(g^{-1})T_0 \in N_G$ as above, then we obtain a class associate to $T$ given by
\[
\mathrm{cl}(T, G) := [w] \in H^1(F, W_G).
\]
It is easy to check that the $F$-conjugacy class of $w$ is independent of the choice of $g$. Hence $\mathrm{cl}(T, G)$ is well-defined. If two $F$-stable maximal tori $T_1$ and $T_2$ are $G^F$-conjugate, then $\mathrm{cl}(T_1, G)=\mathrm{cl}(T_2, G)$.

Let $s \in G^F$ be semisimple, $T_s$ be an $F$-stable maximal torus of $G_s$ contained in an $F$-stable Borel subgroup of $G_s$, and $W_{G_s}$ be the Weyl group of $T_s$ in $G_s$. Let $G_s := C_G(s)^\circ$ is the identity component of the centralizer $C_G(s)$ of $s$ in $G$.
Then we have a natural map for the set of the $G_s^F$-conjugacy classes of $F$-stable maximal tori of $G_s$ to the set of the $G^F$-conjugacy classes of $F$-stable maximal tori of $G$ by sending a torus in $G_s$ to the same torus in $G$. This induces a map
 \[j_{G_s} : H^1(F, W_{G_s}) \to H^1(F, W_G),\]
 and it follows that
\begin{equation}
\mathrm{cl}(T, G)=j_{G_s}(\mathrm{cl}(T, G_s)).\label{2.1}
\end{equation}

\subsection{The map $j_{G_s}$ for $\UU_n$}\label{2.2}
In this subsection we assume that $G=\UU_n$. We may choose the hermitian form that defines the unitary group to be represented by the identity matrix, so that
\[
\UU_n\fq =\{g \in \GGL_n(\overline{\bb{F}}_q) | g \cdot {}^t\bar{g}=I_n\},
\]
where $\bar{g}$ is obtained from raising matrix entries in $g$ to the $q$-th power.
Then the action of the Frobenius
endomorphism $F$ is given by
$
g\mapsto {}^t\bar{g}^{-1}.
$
 We shall calculate $j_{G_s}$ explicitly in this case.

Take an $F$-stable maximal torus $T_0$ of $G$ that is contained in an $F$-stable Borel subgroup. In the notation of the previous subsection, we have $W_G=W_G(T_0)\cong S_n$, and it is easy to check that under this identification  the action of $F$ on $W_G$ is given by
\[
F(w)=(w_0ww_0),
\]
where $w_0$ is the longest element in $S_n$.
Note that $w$ and $w'$ are $F$-conjugate is equivalent to that $ww_0$ and $w'w_0$ are conjuate. In particular the $F$-conjugacy classes in $W_G$ are parametrized by partitions of $n$.

\begin{lemma}\label{inj}
Assume that $G=\UU_n$. Let $s\in G^F$ be a semisimple element which is conjugate to $\mathrm{diag}(s', I_{n-m})$, where $s'$ is a regular semisimple element of $\UU_{n-m}(\mathbb{F}_q)$ such that 1 is not an eigenvalue of $s'$. Then the map $j_{G_s}$ from $H^1(F,W_{G_s})$ to $H^1(F,W_{G})$ is injective.
\end{lemma}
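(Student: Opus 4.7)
The plan is to make both sides of $j_{G_s}$ explicit in terms of partitions, recognize $j_{G_s}$ as a fixed concatenation map, and conclude injectivity. First, I would analyze $G_s$ directly. Since $s'$ is regular semisimple in $\UU_{n-m}(\Fq)$ and $1$ is not among its eigenvalues, the ambient space $\bar{\bb{F}}_q^n$ splits as the $1$-eigenspace of $s$ (of dimension $m$, on which the centralizer acts as the full $\UU_m$) plus the direct sum of the nontrivial eigenspaces of $s'$ (on which the centralizer, by regularity, is a single $F$-stable maximal torus $T'$ of $\UU_{n-m}$). The two spectra being disjoint, these two pieces do not mix, and so
\[
G_s \cong T' \times \UU_m, \qquad W_{G_s} \cong S_m,
\]
with $F$ acting on $W_{G_s}$ as on the Weyl group of $\UU_m$.

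By Subsection 2.2, the sets $H^1(F, W_G)$ and $H^1(F, W_{G_s})$ are parametrized by partitions of $n$ and of $m$ respectively. Let $\lambda_{T'}$ denote the fixed partition of $n-m$ attached to $T'$, and for a class $[w] \in H^1(F, W_{G_s})$ let $\mu_w$ denote the corresponding partition of $m$. I would then show
\[
j_{G_s}([w]) \;=\; \lambda_{T'} \sqcup \mu_w.
\]
To verify this, for an $F$-stable maximal torus $T' \times T_w \subset G_s$, pick $g_1 \in \UU_{n-m}$ and $g_2 \in \UU_m$ conjugating $T'$ and $T_w$ into the standard tori of their ambient groups; with $g = \mathrm{diag}(g_1, g_2) \in \UU_n$ one has ${}^g(T' \times T_w) = T_0$ and
\[
gF(g^{-1}) \;=\; \mathrm{diag}\bigl(g_1 F(g_1^{-1}),\, g_2 F(g_2^{-1})\bigr) \;\in\; N_G(T_0).
\]
Since the two factors live in complementary standard blocks and the rational structure $(T' \times T_w)^F = (T')^F \times T_w^F$ is a direct product, the partition attached to $gF(g^{-1})$ is the concatenation of the partitions attached to $g_1 F(g_1^{-1})$ and $g_2 F(g_2^{-1})$, namely $\lambda_{T'} \sqcup \mu_w$.

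Injectivity is then immediate: the map $\mu \mapsto \lambda_{T'} \sqcup \mu$ from partitions of $m$ to partitions of $n$ is injective, since one recovers $\mu$ by removing the parts of the fixed $\lambda_{T'}$ with multiplicity. The main subtlety is the clean product decomposition $G_s \cong T' \times \UU_m$ in the first step, which relies crucially on the eigenvalue $1$ being absent from the spectrum of $s'$. Without this hypothesis the centralizer would be a larger group in which the $\UU_m$ factor and part of the torus merge, and the interpretation of $j_{G_s}$ as a simple concatenation would break down.
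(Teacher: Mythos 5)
Your proof is correct and takes essentially the same route as the paper: identify $G_s\cong T'\times \UU_m$, interpret $j_{G_s}$ as concatenation of the fixed partition attached to $T'$ with a partition of $m$, and conclude injectivity by removing the fixed parts. The only minor imprecision is that $g=\mathrm{diag}(g_1,g_2)$ conjugates $T'\times T_w$ to the block-diagonal torus, which need not literally be the chosen $T_0$ inside an $F$-stable Borel of $\UU_n$, so a further conjugation (and the standard compatibility of the resulting cocycles) is implicitly used; this does not affect the concatenation computation, which the paper itself simply asserts.
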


\begin{proof}
We have
\[
G_s=T'\times \UU_m,
 \]
 where $T'$ is the centralizer of $s'$ in $\UU_{n-m}$.
 It follows that the set $H^1(F,W_{G_s})$ is in bijection with partitions of $m$. Suppose that $T'$  corresponds to a partition $\lambda$ of $n-m$. For a partition $\mu$ of $m$, let $T_{\mu}$ be the $F$-stable maximal torus of $\UU_m$ corresponding to $\mu$, and let $T=T'\times T_{\mu}\subset G_s$. Then
  \[
 j_{G_s}(\mathrm{cl}(T,G_s))=[w_{[\lambda,\mu]}]
  \]
  where $[w_{[\lambda,\mu]}]$ is the $F$-conjugacy class in $W_G$ corresponding to the partition $[\lambda,\mu]$ of $n$. Hence
  \[
  j_{G_s}(\mathrm{cl}(T'\times T_{\mu},G_s))=j_{G_s}(\mathrm{cl}(T'\times T_{\mu'},G_s))\iff \mu=\mu'.
  \]
\end{proof}

\section{Deligne-Lusztig characters} \label{sec3}
Let $G$ be a connected reductive algebraic
group over $\mathbb{F}_q$. In their celebrated paper \cite{DL}, P. Deligne and G. Lusztig defined a virtual representation $R^{G}_{T,\theta}$ of $G^F$, associated to a character $\theta$ of $T^F$. The construction of Deligne-Lusztig characters makes use of the theory of $\ell$-adic cohomology. Here we only review some standard facts on these characters which will be used in this paper (cf. \cite[Chapter 7]{C}).

If $T$ is contained in an $F$-stable Borel subgroup $B$ of $G$, then $\theta$ lifts to a character of $B^F$ and in this case
\[
R^{G}_{T,\theta}=\mathrm{Ind}_{B^F}^{G^F}\theta.
\]
 In general, if $y = su$ is the Jordan decomposition of an element $y \in G^F$, then
\begin{equation}\label{dl}
R^{G}_{T,\theta}(y)=\frac{1}{|C^{0}(s)^F|}\sum_{g\in G^F,s^g \in T^F}\theta(s^{g})Q^{C^{0}(s)}_{{}^{g}T}(u)
\end{equation}
where $C^{0}(s) = C^{0}_{G}(s)$ is the connected component of the centralizer of $s$ in $G$, and $Q^{C^{0}(s)}_{{}^{g}T}(u)= R^{ C^{0}(s)}_{{}^{g}T,1}(u)$ is the Green function of $C^{0}(s)$ associated to ${}^{g}T$.

For a semisimple element $s\in G^F$,  put \[N_G(s,T)=\{\gamma\in G :s^{\gamma}\in T\}.\] The group $G^F_s$ acts on $N_G(s, T)^F$ by left multiplication, and we set
\[
 \bar{N}_G(s, T)^F := G^F_s   \backslash N_G(s, T)^F.
 \]
By \cite[Corollary 2.3]{R}, we have an explicit formula for $|\bar{N}_G(s, T)^F |$:
 \begin{lemma}\label{3.0}
 Let $\omega \in H^1(F, W_G)$, and $T$ be an $F$-stable maximal torus corresponding to $\omega$. Then the set $N_G(s, T)^F$ is nonempty if and only if the fiber $j^{-1}_{G_s}(\omega)$ is nonempty, in which case we have
 \[
 |\bar{N}_G(s, T)^F |=\sum_{\omega'\in j^{-1}_{G_s}(\omega)}\frac{|W_G(T)^F|}{|W_{G_s}(T_{\omega'})^F|},
 \]
 where for each $\omega'\in j^{-1}_{G_s}(\omega)$, the torus $T_{\omega'}$ is an $F$-stable maximal torus corresponding to $\omega'$.
\end{lemma}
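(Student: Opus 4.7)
The plan is to count $\bar N_G(s,T)^F$ by fibering $N_G(s,T)^F$ over the set of $F$-stable maximal tori of $G_s$ which become $G^F$-conjugate to $T$ inside $G$. First I would translate the defining condition: since $s^\gamma = \gamma^{-1}s\gamma$, an element $\gamma\in G^F$ lies in $N_G(s,T)^F$ precisely when $s \in \gamma T \gamma^{-1}$; as $s$ is central in $G_s$, this is equivalent to $\gamma T\gamma^{-1}$ being an $F$-stable maximal torus of $G_s$. Because $\gamma \in G^F$, the torus $\gamma T \gamma^{-1}$ shares the $G^F$-class $\omega$ with $T$ in $G$; conversely, every $F$-stable maximal torus $T'$ of $G_s$ with $\mathrm{cl}(T',G) = \omega$ is of the form $\gamma T \gamma^{-1}$ for some $\gamma\in G^F$, and such a $\gamma$ automatically lies in $N_G(s,T)^F$. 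Combined with \eqref{2.1}, this identifies the nonemptiness of $N_G(s,T)^F$ with the nonemptiness of $j_{G_s}^{-1}(\omega)$.

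Next I would analyze the map
\[
\phi:N_G(s,T)^F \longrightarrow \mathcal{T}_\omega,\qquad \gamma \longmapsto \gamma T \gamma^{-1},
\]
where $\mathcal{T}_\omega$ denotes the set of $F$-stable maximal tori of $G_s$ whose image under $j_{G_s}$ is $\omega$. The previous paragraph shows $\phi$ is surjective, and two elements $\gamma_1,\gamma_2$ have the same image iff $\gamma_1^{-1}\gamma_2 \in N_G(T)$; $F$-fixedness then forces $\gamma_1^{-1}\gamma_2 \in N_G(T)^F$, so every fiber is a right coset of $N_G(T)^F$ and has cardinality $|N_G(T)^F|$. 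On the target side, $\mathcal{T}_\omega$ is the disjoint union of $G_s^F$-conjugacy classes indexed by $\omega'\in j_{G_s}^{-1}(\omega)$, the class of $T_{\omega'}$ having size $|G_s^F|/|N_{G_s}(T_{\omega'})^F|$.

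Combining these counts with the fact that $G_s^F$ acts freely by left multiplication on $N_G(s,T)^F$, so that $|\bar N_G(s,T)^F| = |N_G(s,T)^F|/|G_s^F|$, I arrive at
\[
|\bar N_G(s,T)^F| \;=\; \sum_{\omega'\in j_{G_s}^{-1}(\omega)}\frac{|N_G(T)^F|}{|N_{G_s}(T_{\omega'})^F|}.
\]
The last step is to write $|N_G(T)^F| = |W_G(T)^F|\cdot|T^F|$ and $|N_{G_s}(T_{\omega'})^F| = |W_{G_s}(T_{\omega'})^F|\cdot|T_{\omega'}^F|$, and then invoke the fact that $T$ and $T_{\omega'}$ are $G^F$-conjugate in $G$, whence $|T^F| = |T_{\omega'}^F|$ and the torus factors cancel, yielding the stated formula. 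I do not expect a serious obstacle here: the argument is essentially double counting, and the only point needing mild care is the identification of the fibers of $\phi$ with right cosets of $N_G(T)^F$, which rests on the $F$-fixedness of both endpoints.
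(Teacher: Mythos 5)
Your argument is correct. The paper does not prove this lemma at all---it quotes it from \cite[Corollary 2.3]{R}---and your double counting (fibering $N_G(s,T)^F$ over the $F$-stable maximal tori of $G_s$ that are $G^F$-conjugate to $T$, splitting that set into $G_s^F$-classes indexed by $j_{G_s}^{-1}(\omega)$ via the classification of $F$-stable maximal tori and (\ref{2.1}), and finally cancelling $|T^F|=|T_{\omega'}^F|$ using the Lang--Steinberg isomorphisms $N_G(T)^F/T^F\cong W_G(T)^F$ and $N_{G_s}(T_{\omega'})^F/T_{\omega'}^F\cong W_{G_s}(T_{\omega'})^F$) is essentially the standard argument behind Reeder's statement, so it serves as a valid self-contained substitute. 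The only step worth making explicit is that each fiber of $\phi$ is the \emph{full} coset $\gamma N_G(T)^F$ (a left coset, incidentally, not a right one): for $n\in N_G(T)^F$ one has $s^{\gamma n}=n^{-1}s^{\gamma}n\in T$, so right multiplication by $N_G(T)^F$ preserves $N_G(s,T)^F$, which is what gives $|N_G(s,T)^F|=|N_G(T)^F|\cdot|\mathcal{T}_\omega|$.
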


Thus, we may rewrite (\ref{dl}) as
\[
R^{G}_{T,\theta}(y)=\sum_{\bar{\gamma}\in \bar{N}_G(s, T)^F}\theta(s^{\bar{\gamma}})Q^{G_s}_{^{\gamma}T}(u).
\]
In our future computations with $R^G_{T,\theta}$ in Reeder's formula, it will be useful to let $s$ vary in $G^F$ in such a way that $G_s$ is unchanged. Let $Z(G_s)$ denote the center of $G_s$. For $\omega'\in j^{-1}_{G_s}(\omega)$,
the function
\[
\theta_{\omega'}(z):=\sum_{\bar{\gamma}\in\mathcal{O}_{\omega'}}\theta(z^{\bar{\gamma}})
\]
is well-defined on $Z(G_s)^F$, where $\mathcal{O}_{\omega'}$ is the $W_G(T)^F$-orbit in $\bar{N}_G(s, T)^F $ corresponding to $\omega'\in j^{-1}_{G_s}(\omega)$ as in Lemma \ref{3.0}. Then we have
\[
R^{G}_{T,\theta}(zu)=\sum_{\omega'\in j^{-1}_{G_s}(\omega)}\theta_{\omega'}(z)Q^{G_s}_{^{\gamma}T}(u),\quad \mathrm{if}\ G_z=G_s.
\]

More generally, for an $F$-stable Levi subgroup $L$ which is not necessarily contained in an $F$-stable parabolic subgroup, and a representation $\pi$ of $L^F$, one has the generalized Deligne-Lusztig induction $R^G_{L}(\pi)$. In particular, if $L$ is contained in an $F$-stable parabolic subgroup $P$, then $R^G_{L}(\pi)$ coincides with the parabolic induction
\begin{equation}\label{paraind}
I_P^G(\pi):=\mathrm{Ind}_{P^F}^{G^F}\pi.
\end{equation}
For instance, recall that if $L=T$ is contained in an $F$-stable Borel subgroup $B$, then
\[
I_B^G(\theta)=R^G_{T,\theta}.
\]
The following result is standard.

\begin{proposition} \label {3.1}
Let $Q \subset P$ be $F$-stable parabolic subgroups of $G$, and $M \subset L$ be their Levi subgroups respectively. Then
\[
I^{G}_{P}\circ I^{L}_{L\cap Q}=I^{G}_{Q}.
\]
\end{proposition}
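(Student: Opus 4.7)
The plan is to unpack both sides of the desired equality via the explicit formula $I^G_P(\sigma)=\mathrm{Ind}_{P^F}^{G^F}\sigma$ (with $\sigma$ inflated from the Levi through the unipotent radical), and then reduce to transitivity of ordinary induction together with the standard compatibility between induction and inflation in a semidirect product.

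First I would record the structural fact that when $Q\subset P$ are $F$-stable parabolic subgroups with Levi subgroups $M\subset L$, the intersection $L\cap Q$ is an $F$-stable parabolic subgroup of $L$ with Levi $M$ and unipotent radical $L\cap U_Q$, and moreover the Levi decomposition gives $Q=(L\cap Q)\ltimes U_P$. Since $U_P$ is connected, the Lang-Steinberg theorem lets one take $F$-fixed points termwise, yielding $Q^F=(L\cap Q)^F\ltimes U_P^F$. In particular $U_P^F$ is normal in $Q^F$ with quotient $(L\cap Q)^F$, and this quotient sits inside $L^F=P^F/U_P^F$ in the evident way.

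For a representation $\tau$ of $M^F$, the right-hand side $I^G_Q(\tau)$ is obtained by inflating $\tau$ to $Q^F$ (trivially on $U_Q^F$) and inducing to $G^F$, while the left-hand side first forms $\sigma:=I^L_{L\cap Q}(\tau)$ on $L^F$ by inflating $\tau$ to $(L\cap Q)^F$ and inducing to $L^F$, then inflates $\sigma$ to $P^F$ and induces to $G^F$. By transitivity of ordinary induction one has
\[
\mathrm{Ind}_{Q^F}^{G^F}=\mathrm{Ind}_{P^F}^{G^F}\circ\mathrm{Ind}_{Q^F}^{P^F},
\]
so it suffices to identify $\mathrm{Ind}_{Q^F}^{P^F}$ applied to the inflation of $\tau$ with the inflation to $P^F$ of $\mathrm{Ind}_{(L\cap Q)^F}^{L^F}\tau$.

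This last identification is the standard compatibility between induction and inflation for the pair of semidirect-product decompositions $P^F=L^F\ltimes U_P^F$ and $Q^F=(L\cap Q)^F\ltimes U_P^F$: the inflated representation is trivial on the normal subgroup $U_P^F\subset P^F$, so the induced representation $\mathrm{Ind}_{Q^F}^{P^F}$ is itself trivial on $U_P^F$ and hence factors through $P^F/U_P^F=L^F$; choosing coset representatives for $Q^F$ in $P^F$ inside $L^F$ (which is possible since $P^F=L^F\cdot U_P^F$) matches the resulting $L^F$-representation with $\mathrm{Ind}_{(L\cap Q)^F}^{L^F}\tau$ on the nose. I do not anticipate a genuine obstacle; the only bookkeeping point that warrants care is the descent of the Levi decomposition $Q=(L\cap Q)U_P$ to $F$-fixed points, which is handled by connectedness of $U_P$ and Lang-Steinberg.
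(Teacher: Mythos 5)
Your argument is correct and complete: the paper gives no proof of Proposition \ref{3.1}, simply calling it standard, and what you have written is exactly the standard argument (transitivity of ordinary induction plus the compatibility of induction with inflation, using $Q=(L\cap Q)\ltimes U_P$ and $Q^F=(L\cap Q)^F\ltimes U_P^F$), i.e. the usual proof of transitivity of Harish-Chandra induction. One tiny remark: Lang--Steinberg is not really needed for the termwise passage to $F$-fixed points, since the semidirect product decomposition into $F$-stable subgroups already forces $F(l)=l$ and $F(u)=u$ by uniqueness of the factorization; this does not affect the validity of your proof.
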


An $F$-stable maximal torus $T$ is said to be minisotropic if $T$ is not contained in any $F$-stable proper parabolic subgroup of $G$. Then a representation $\pi$ of $G^F$ is cuspidal if and only if
\[
\langle \pi,R^G_{T,\theta}\rangle_{G^F}=0
\]
whenever $T$ is not minisotropic, for any character $\theta$ of $T^F$ (see \cite[Theorem 6.25]{S1}). Note that if $G=\GGL_n$, then $T$ is  minisotropic if and only if $T^F\cong \GGL_1(\mathbb{F}_{q^n})$.

Let $\theta\in \hat{T}^F$, $\theta'\in \hat{T'}^F$ where $T$, $T'$ are $F$-stable maximal tori. Two pairs $(T,\theta)$ and $(T',\theta')$ are said to be geometrically conjugate if for some $n\ge 1$, there exists $x \in G^{F^n}$ such that
\[
^xT^{F^n} = T ^{\prime F^n}\ \mathrm{and}\ \ ^x(\theta \circ  N) = \theta'\circ N
 \]
 where $N$ is the norm map. An irreducible character of $G^F$ is called a regular character if it appears in the Gelfand-Graev character (c.f. \cite[p. 281]{C}). By \cite[p. 378]{C}, in any geometric conjugacy class $\kappa$ of the pairs $(T,\theta)$, there is only one regular character $\pi^{reg}_\kappa$ that appears in  $R^G_{T,\theta}$ for some $(T,\theta)\in \kappa$; and any regular character only appears in one geometric conjugacy class. Moreover
\[
\pi^{reg}_\kappa=\sum_{(T,\theta)\in \kappa
\ \rm{mod} \  G^F}\frac{\varepsilon_G \varepsilon_T R^G_{T,\theta}}{\langle R^G_{T,\theta},R^G_{T,\theta}\rangle _{G^F}}.
\]
The above equation implies that $\pi_\kappa^{reg}$ appears in $R^G_{T,\theta}$ for each pair $(T,\theta)\in \kappa$. Thus $\pi_\kappa^{reg}$ is cuspidal if and only if $T$ is minisotropic and $\theta$ is regular for every pair $(T,\theta)\in \kappa$.  Here $\theta$ regular means that
\[
^x\theta=\theta, \ x\in W_G(T)^F \ \mathrm{if \ and \ only\ if\ } x=1.
\]
In particular, if $\tau$ is an irreducible cuspidal representation of $\GGL_n(\Fq)$, then there is a pair $(T,\theta)$ with $T$ an $F$-stable minisotropic maximal torus and $\theta$ regular such that
$
\tau=\pm R_{T,\theta}^G.
$

We now review the classification of representations of $\UU_n(\Fq)$. The classification of the representations of $\UU_n(\Fq)$ was given by Lusztig and Srinivasan in \cite{LS}. As in Subsection \ref{2.2}, for $G=\UU_n$ we may and do identity $W_G$ with $S_n$, and we let $T_w$ be the $F$-stable maximal torus of $\UU_n$ corresponding to $w\in S_n$.

\begin{theorem}\label{thm3.3}
Let $\sigma$ be the character of an irreducible representation of $S_n$. Then
\[
R_\sigma^{\UU_n}:=\frac{1}{|S_n|}\sum_{w\in S_n}\sigma(ww_0)R_{T_w,1}^{\UU_n}
\]
is (up to sign) a unipotent representation of $\UU_n(\Fq)$ and all unipotent representations of $\UU_n(\Fq)$ arise in this way.\end{theorem}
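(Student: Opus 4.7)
The plan is to exhibit the family $\{R_\sigma^{\UU_n} : \sigma \in \widehat{S_n}\}$ as a norm-one, pairwise orthogonal set inside the lattice of unipotent virtual characters of $\UU_n(\Fq)$, and then to invert the defining formula via Fourier analysis on $S_n$ to show that every unipotent irreducible arises this way up to sign. The starting observation, already used in Subsection \ref{2.2}, is that under the identification $W_G \cong S_n$ together with the twisted Frobenius action $F(w) = w_0 w w_0$, the substitution $w \mapsto w w_0$ converts $F$-conjugacy in $W_G$ into ordinary conjugacy in $S_n$, and converts the $F$-centralizer of $w$ into the ordinary centralizer $C_{S_n}(w w_0)$. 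Consequently the $G^F$-conjugacy classes of $F$-stable maximal tori of $\UU_n$ are parametrized by partitions of $n$.

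The main computation is the inner product of two Deligne--Lusztig virtual characters. Invoking the standard formula (see e.g.\ \cite{C})
\[
\langle R_{T_w,1}^G, R_{T_{w'},1}^G\rangle_{G^F} = |\{x \in W_G : x w F(x)^{-1} = w'\}|,
\]
and rewriting the condition $xwF(x)^{-1} = w'$ as $x(w w_0) x^{-1} = w' w_0$, one sees that this inner product equals $|C_{S_n}(w w_0)|$ if $w w_0 \sim w' w_0$ in $S_n$ and vanishes otherwise. Substituting into the definition of $R_\sigma^{\UU_n}$, changing variables to $u = w w_0$, $u' = w' w_0$, and applying the orthogonality of irreducible characters of $S_n$ (which are real-valued, so $\overline{\tau(u)} = \tau(u)$), one obtains
\[
\langle R_\sigma^{\UU_n}, R_\tau^{\UU_n}\rangle_{G^F} = \frac{1}{|S_n|}\sum_{u \in S_n} \sigma(u)\tau(u) = \delta_{\sigma, \tau}.
\]
Because each $R_{T_w,1}^G$ is by definition a virtual unipotent character, $R_\sigma^{\UU_n}$ is a norm-one virtual unipotent character; granting the integrality of its decomposition into irreducible characters, it must then equal $\varepsilon_\sigma \pi_\sigma$ for some sign $\varepsilon_\sigma \in \{\pm 1\}$ and some irreducible unipotent $\pi_\sigma$, with distinct $\sigma$ giving distinct $\pi_\sigma$.

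For the surjectivity, I would invert the definition via column orthogonality $\sum_\sigma \sigma(u)\sigma(v) = |C_{S_n}(u)|\,\delta_{[u],[v]}$ in $\widehat{S_n}$, which yields
\[
R_{T_w,1}^G = \sum_{\sigma \in \widehat{S_n}} \sigma(w w_0)\, R_\sigma^{\UU_n}.
\]
Since every unipotent irreducible $\pi$ appears, by definition, in some $R_{T_w,1}^G$, it must coincide with $\pm R_\sigma^{\UU_n}$ for a unique $\sigma$, and counting shows both parametrizing sets have the same size $|\widehat{S_n}|$.

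The principal obstacle in this strategy is justifying the integrality used above: a priori $R_\sigma^{\UU_n}$ is only a $\mathbb{Q}$-linear combination of irreducibles (because of the factor $1/|S_n|$), and the norm-one identity alone does not force it to be $\pm$ a single irreducible — e.g.\ $\frac{1}{2}(\pi_1 + \pi_2 + \pi_3 - \pi_4)$ has unit $L^2$-norm. Establishing that the transition matrix $(\sigma(w w_0))_{\sigma,[w]}$ really matches the decomposition of $R_{T_w,1}^G$ into unipotent irreducibles is the substantive content of the Lusztig--Srinivasan theorem for type $^2A$. It can be proved either via an explicit analysis of Green functions of $\UU_n$ (leveraging the combinatorics of partitions) or via Lusztig's general machinery of almost characters, together with the fact that the non-abelian Fourier transform for type $A$ (and $^2A$) families is trivial, so that almost characters coincide with irreducible unipotent characters up to sign — a phenomenon that fails for the remaining classical types.
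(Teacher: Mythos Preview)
The paper does not supply its own proof of this statement: Theorem \ref{thm3.3} is quoted from Lusztig--Srinivasan \cite{LS} as background, so there is nothing in the text to compare your argument against.

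Your outline is the standard route and is correct as far as it goes. The inner-product computation $\langle R_\sigma^{\UU_n}, R_\tau^{\UU_n}\rangle_{G^F} = \delta_{\sigma,\tau}$ is right (your rewriting of $xwF(x)^{-1}=w'$ as $x(ww_0)x^{-1}=w'w_0$ is exactly the point), and your Fourier inversion correctly shows that the $R_\sigma^{\UU_n}$ span the same $\mathbb{Q}$-space as the $R_{T_w,1}^{\UU_n}$, whence there are exactly $|\widehat{S_n}|$ irreducible unipotent characters. You have also correctly located the genuine obstacle: orthonormality together with the counting only gives a real orthogonal transition matrix between $\{R_\sigma^{\UU_n}\}$ and the irreducible unipotents, not a signed permutation matrix, and your $\tfrac{1}{2}(\pi_1+\pi_2+\pi_3-\pi_4)$ example shows the gap is real. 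Closing it is precisely the content of \cite{LS}; as you note in your final paragraph, Lusztig's later theory of almost characters and families \cite{L1} gives a uniform conceptual proof (families in type~$^2\!A$ being singletons, so the non-abelian Fourier transform is trivial). Your proposal is therefore a faithful sketch of the proof architecture, with the hard step honestly flagged rather than proved.
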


By the above theorem, we know that every irreducible representation of $S_n$ corresponds to a unipotent representation of $\UU_n(\Fq)$. It is well known that irreducible representations of $S_n$ are parametrized by partitions of $n$. For a partition $\lambda$ of $n$,  denote by  $\sigma_\lambda$ the corresponding representation of $S_n$, and
let $R_\lambda^{\UU_n}:=R_{\sigma_\lambda}^{\UU_n}$ be the corresponding unipotent representation of $\UU_n(\Fq)$.
By Lusztig's result \cite{L2},  $R_\lambda^{\UU_n}$ is (up to sign) a unipotent cuspidal representation  of $\UU_n(\Fq)$ if and only if $n=\frac{k(k+1)}{2}$ for some positive integer $k$ and  $\lambda=[k,k-1,\cdots,1]$.

\section{Reeder's formula} \label{sec4}
As before,  let $G$ be a connected reductive algebraic
group over $\mathbb{F}_q$. Let $H \subset G$ be a connected reductive subgroup of $G$ over $\Fq$, and $S$ be an $F$-stable maximal torus of $H$. Let $B$ and $B_H$ be Borel subgroups of $G$ and $H$, respectively, and let $\delta$ be the minimum codimension of a $B_H$-orbit in $G/B$.

In \cite{R}, Reeder gives a formula  for the multiplicity $\langle R_{T,\theta}^G,R_{S,\theta'}^H\rangle _{H^F}$ when $G$ and $H$ are simple. More precisely, by \cite [Theorem 1.4]{R} there is a polynomial $M(t)$ of degree at most $\delta$ whose coefficients depend on the characters $\theta$ and $\theta'$ of $T^F$ and $S^F$ respectively, and an integer $m \ge 1$ such that
\[
\langle R_{T,\theta^\nu}^G,R_{S,\theta^{\prime \nu}}^H\rangle _{H^{F^\nu}}=M(q^\nu)
\]
 for all positive integers $\nu \equiv 1$ mod $m$, where $\theta^\nu=\theta\circ N_\nu^T$ with $ N_\nu^T: T^{F^\nu}\to T^F$ the norm map. And the degree $\delta$ is optimal. Moreover, \cite[Proposition 7.4]{R} gives an explicit formula for the leading coefficient  in $M(t)$, which we denote by $A$ below.

 In order to calculate $\langle I^{\UU_{n+1}}_{P}(\tau\otimes\pi'),\pi_\lambda\rangle _{\UU _n(\Fq)}$ using Reeder's method, it is necessary to extend his result from connected simple algebraic groups to unitary groups. In particular, we will show that $\deg M(t)=0$, hence $\langle I^{\UU_{n+1}}_{P}(\tau\otimes\pi'),\pi_\lambda \rangle _{\UU _n(\Fq)}=A$. In this case, the explicit formula for $A$ will be given in Proposition \ref{proposition:A}.

 In the sequel, we will first recall some results in \cite[Sections 3-5]{R}  without  proof, which do not require that $G$ is simple. Then we extend  \cite[Sections 6-7]{R} to the case of $\UU_n$.

 \subsection{A partition of $S$}
 We first recall the notations in \cite{R}. Let $I(S)$ be an index set for the following set of subgroups of $G$,
 \[
 \{G_s:s\in S\}
 \]
 where $G_s:=C_{G}(s)^{\circ}$. Note that if $G=\UU_n$, then $I(S)$ is finite. For $\iota \in I(S)$, denote by $G_{\iota}$ be the corresponding connected centralizer, and put
 \[
 S_\iota:=\{s\in S:G_s=G_\iota\}.
 \]
 The $F$-action on $S$ induces a permutation of $I(S)$, and let $I(S)^F$ be the $F$-fixed points in $I(S)$. Note that if $S_\iota^F$ is nonempty, then $\iota \in  I(S)^F$.

For $\iota \in  I(S)$,  set
 \[
 H_\iota:=(H\cap G_\iota)^\circ.
 \]
We observe that if $G_s=G_\iota$, then
 \[
 s\in H_\iota\subset G_\iota.
 \]
Put $Z_\iota:=Z(G_\iota)\cap S$. Then it is easy to check that
 \[
 \begin{aligned}
 &Z_\iota\subset Z(H_\iota),\\
 &S_\iota\subset Z_\iota \subset S,\\
 &G_\iota=C_{G}(Z_\iota)^\circ.
 \end{aligned}
 \]
Define a partial ordering on $I(S)$ by
 \[
 \iota'\le\iota \quad \Longleftrightarrow \quad G_\iota \subset G_{\iota'} \quad \Longleftrightarrow \quad Z_{\iota'}\subset Z_{\iota},
 \]
 and set
 \[
 I(\iota, S) := \{\iota \in I(S) : \iota' < \iota\}.
 \]
 For a subset $J \subset I(S)$, put
 \[
 Z_J:=\bigcap_{\iota\in J}Z_\iota.
 \]

 \subsection{Multiplicity as a polynomial}
  Let $f_J(t)$ be the polynomial of degree $\dim Z_J$  in \cite[Section 5.6]{R}. Let $Q_{v,u}^{G_\iota}(t)$ and $Q_{\varsigma,u}^{H_\iota}(t) $ be the Green polynomials in \cite[Section 5.4]{R}  and $P_{\iota,u}(t)$ be
the polynomial given by \cite[(5.11)]{R}. Then we have
\[
\det P_{\iota,u}(t)=\dim C_{H_\iota}(u), \quad \deg f_{\iota}(t)=\dim Z_\iota.
\]
We set
\begin{equation}\label{chi}
\chi_v:=\frac{1}{|W_{G_\iota}(T)^F|}\sum_{x\in W_{G}(T) }(^x\chi)|_{Z_\iota}\quad \mathrm{and}\quad \eta_\varsigma:=\frac{1}{|W_{H_\iota}(S)^F|}\sum_{y\in W_{H}(S) }(^y\eta)|_{Z_\iota}.
\end{equation}
Denote the set of unipotent elements of $H_\iota$ by $\mathcal{U}(H_\iota)$. Let $\alpha$ denote a quadruple  $(\iota,u,v,\varsigma)$, where
  \begin{equation}\label{q}
  \iota\in I(S)^F,\quad [u]\in\mathcal{U}(H_\iota)^F,\quad v\in j^{-1}_{G_\iota}(\mathrm{cl}(T,G)), \quad \varsigma\in j^{-1}_{H_\iota}(\mathrm{cl}(S,H)).
  \end{equation}
 Let $\Theta_{\alpha}(t)$ be the rational function given by
  \begin{equation}\label{theta}
  \Theta_{\alpha}(t):=\langle \chi_v, \eta_\varsigma\rangle _{Z_\iota^F}+\sum_{J\subset I(\iota,S)^F}(-1)^{|J|}\langle \chi_v, \eta_\varsigma\rangle _{Z_J^F}\frac{f_J(t)}{f_{\iota}(t)},
 \end{equation}
  and $\Psi_{\alpha}(t)$ be the rational function given by
\begin{equation}\label{psi}
  \Psi_{\alpha}(t):=f_{\iota}(t) \cdot \frac{\overline{Q_{v,u}^{G_\iota}(t) }Q_{\varsigma,u}^{H_\iota}(t)}{|\bar{N}_{H}(\iota,S)^F||P_{\iota,u}(t)|}.
 \end{equation}
 The following result is given by \cite[(5.18)]{R}:
 \[
 \langle R^G_{T,\chi},R^H_{S,\eta}\rangle _{H^F}=\sum_{\alpha}\Psi_{\alpha}(q)\Theta_{\alpha}(q),
 \]
 where $\alpha$ runs over the quadruples  $(\iota,u,v,\varsigma)$ given by (\ref{q}).

\subsection{Degree of $\Psi_{\alpha}(t)$}
From now on, assume that
\begin{equation} \label{gh}
(G, H)=(\UU_{n+1}, \UU_n).
\end{equation}
Let $u$ be a unipotent element given by (\ref{q}).
Let $\mathcal{B}_G$ be the variety of Borel subgroups of $G$, and let $\mathcal{B}_G^u$ be the variety of fixed points of $u$. Set
\[
d_G(u) := \dim  \mathcal{B}_G^u.
\]
Steinberg proved that
\[
2d_G(u) = \dim C_G(u) - \overline{rk} \ G,
\]
where $\overline{rk} \ G$ is the absolute rank of $G$.

From Section 5.4 and equation (5.9) in \cite{R}, we find that
\begin{align}
\label{equ:deg1} \deg \Psi_{\alpha}(t) & \le\dim Z_\iota+d_{G_\iota}(u)+d_{H_\iota}(u)-\dim C_{H_\iota}(u)\\
&=\dim Z_\iota+\frac{1}{2}(\dim C_{G_\iota}(u)-\dim C_{H_\iota}(u)-\bar{rk} \ G-\bar{rk} \ H)\nonumber.
\end{align}
Define
\[
\begin{aligned}
\delta_\iota:&=\dim Z_\iota+\dim \mathcal{B}_{G_\iota}-\dim \mathcal{B}_{H_\iota}-\dim S\\
&=\dim Z_\iota+\frac{1}{2}(\dim C_{G'_\iota}(1)-\dim C_{H'_\iota}(1)-\overline{rk} \ G-\overline{rk} \ H).
\end{aligned}
\]

\begin{lemma} \label{lem:deg1}
Assume (\ref{gh}). Then for a quadruple $\alpha =(\iota,u,v,\varsigma)$ given by (\ref{q}), one has $\deg \Psi_{\alpha}(t) \le \delta_\iota$ with equality hold only if $u = 1$.
\end{lemma}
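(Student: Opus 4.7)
The plan is to rewrite the degree bound in (\ref{equ:deg1}) using Steinberg's formula, reduce the comparison with $\delta_\iota$ to an orbit-dimension inequality, and then extract equality conditions from a Jordan-type calculation on the $\UU_m/\UU_{m-1}$-factor of the Levi.

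\textbf{Setup and main estimate.} Since $Z_\iota$ lies in $S\subset H=\UU(V_n)$, it fixes the anisotropic vector $v_0$ defining $H\hookrightarrow G$, and its eigenspace decomposition on $V_{n+1}=\langle v_0\rangle\oplus V_n$ satisfies $V_{n+1}^{(1)}=\langle v_0\rangle\oplus V_n^{(1)}$ with the non-trivial eigenspaces unchanged. Writing $m=\dim_{\mathbb{F}_{q^2}}V_{n+1}^{(1)}$, the connected centralizers split as
\[
G_\iota=\UU_m\times L_0,\qquad H_\iota=\UU_{m-1}\times L_0,
\]
so that $\dim G_\iota-\dim H_\iota=2m-1$ and $\overline{rk}\,L_0=n+1-m$. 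Using Steinberg's formula $2 d_L(v)=\dim C_L(v)-\overline{rk}\,L$, a short computation yields
\[
\delta_\iota-\bigl[\dim Z_\iota+d_{G_\iota}(u)+d_{H_\iota}(u)-\dim C_{H_\iota}(u)\bigr]=\tfrac{1}{2}\bigl(\dim(G_\iota\cdot u)-\dim(H_\iota\cdot u)\bigr),
\]
which is nonnegative because $H_\iota\cdot u\subset G_\iota\cdot u$. Combined with (\ref{equ:deg1}), this gives $\deg\Psi_\alpha(t)\le\delta_\iota$.

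\textbf{Jordan-type analysis for the equality case.} Write $u=(u_1,u_0)$ with $u_1\in\UU_{m-1}$ and $u_0\in L_0$ unipotent, and let $\tilde u_1$ denote the extension of $u_1$ to $\UU_m$ by the identity on $\langle v_0\rangle$. The orbits factor as products, and the $L_0$-parts cancel, so the gap $\dim(G_\iota\cdot u)-\dim(H_\iota\cdot u)$ reduces to $\dim(\UU_m\cdot\tilde u_1)-\dim(\UU_{m-1}\cdot u_1)$. If $u_1$ has Jordan type $\mu\vdash m-1$, then $\tilde u_1$ has type $\mu\cup\{1\}$; applying the standard formula $\dim C_{\GL_N}(w)=\sum_i(\lambda'_i)^2$ (which applies to $\UU_N$ after base change to $\overline{\mathbb{F}}_q$) and noting that adjoining a $1$-part to $\mu$ increases $\mu'_1$ by $1$, one obtains $\dim C_{\UU_m}(\tilde u_1)-\dim C_{\UU_{m-1}}(u_1)=2\mu'_1+1$, hence
\[
\dim(\UU_m\cdot\tilde u_1)-\dim(\UU_{m-1}\cdot u_1)=2(m-1-\mu'_1)\ge 0,
\]
with equality if and only if $\mu=1^{m-1}$, i.e., $u_1=1$. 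In particular, if $u_1\ne 1$, then already the orbit-dimension gap is positive and $\deg\Psi_\alpha(t)<\delta_\iota$ strictly.

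\textbf{Main obstacle.} The argument above establishes $\deg\Psi_\alpha(t)\le\delta_\iota$ in general and strict inequality whenever $u_1\ne 1$. The remaining and hardest case is $u_1=1$ with $u_0\ne 1$: here the orbit-dimension comparison is tight, and strict inequality must instead be extracted from the leading coefficient of $\Psi_\alpha(t)$ at degree $\delta_\iota$. Concretely, using the factorizations $Q^{G_\iota}_{v,u}=Q^{\UU_m}_{v_1,1}\cdot Q^{L_0}_{v_0,u_0}$ and $Q^{H_\iota}_{\varsigma,u}=Q^{\UU_{m-1}}_{\varsigma_1,1}\cdot Q^{L_0}_{\varsigma_0,u_0}$, one needs to show that the top-degree coefficient coming from the Green functions of $L_0$ at the nontrivial unipotent $u_0$ vanishes for every admissible choice of $v_0,\varsigma_0$ in the preimages $j_{G_\iota}^{-1},j_{H_\iota}^{-1}$. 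Carrying out this vanishing via the explicit form of Green functions on the general linear factors of $L_0$, together with the rationality/injectivity constraints on the map $j$ supplied by Lemma~\ref{inj}, is the main technical obstacle of the proof.
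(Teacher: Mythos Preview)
Your orbit-dimension identity
\[
\delta_\iota-\bigl[\dim Z_\iota+d_{G_\iota}(u)+d_{H_\iota}(u)-\dim C_{H_\iota}(u)\bigr]=\tfrac{1}{2}\bigl(\dim(G_\iota\cdot u)-\dim(H_\iota\cdot u)\bigr)
\]
is correct and gives a clean proof of the inequality $\deg\Psi_\alpha(t)\le\delta_\iota$. Your Jordan-type analysis is also correct: it shows that the \emph{upper bound} in (\ref{equ:deg1}) equals $\delta_\iota$ precisely when $u_1=1$, regardless of $u_0$.

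The gap is exactly what you flag as the ``main obstacle,'' and it is real --- but the resolution you sketch does not work. When $u_1=1$ and $u_0\ne 1$, strict inequality in $\deg\Psi_\alpha(t)<\delta_\iota$ would have to come from strict inequality in (\ref{equ:deg1}) itself, i.e.\ from a drop in the degree of the $L_0$-Green functions. But the coefficient of $Q^{L_0}_{T_w}(u_0)$ in degree $d_{L_0}(u_0)$ is (up to sign) the Springer character value $\rho_{u_0}(w)$, which is nonzero for many $w$; the fibre conditions $v\in j_{G_\iota}^{-1}(\cdot)$ and $\varsigma\in j_{H_\iota}^{-1}(\cdot)$ do not force these particular character values to vanish. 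Your appeal to Lemma~\ref{inj} cannot help either: that lemma requires $s'$ to be \emph{regular} semisimple, which is exactly the case where $L_0$ is a torus (equivalently $\delta_\iota=0$), and then $u_0=1$ automatically, so there is nothing left to prove.

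The paper takes an entirely different route: it makes no attempt at a direct argument for the equality case. It simply observes that the relevant centralizer dimensions for $(\UU_{n+1},\UU_n)$ and $(\mathrm{SU}_{n+1},\mathrm{SU}_n)$ differ by the same additive constant on both sides of (\ref{equ:deg2}), and then cites Reeder's Lemma~6.4 as a black box for the simple pair. Whatever mechanism handles $u_0\ne 1$ is buried in Reeder's proof and is not reproduced. It is worth noting, though, that for the paper's only application of this lemma (Proposition~\ref{proposition:A}) one restricts to $\delta_\iota=0$, where $L_0$ is a torus and hence $u_0=1$; in that regime your argument already yields $u=1$. So your method establishes everything the paper actually \emph{uses}, just not the lemma in the generality stated.
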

\begin{proof}
By \cite[(6.3)]{R}, for a simple group $G'$ we have
\begin{equation}
\dim C_{G'_\iota}(u)-\dim C_{H'_\iota}(u)\le\dim C_{G'_\iota}(1)-\dim C_{H'_\iota}(1).\label{equ:deg2}
\end{equation}
Since $\rm{SU}_{n+1}$ is simple, it follows that
\begin{align} \label{equ:deg3}
& \dim C_{\UU_{n+1,\iota}}(u)-\dim C_{\UU_{n,\iota}}(u) \\
=&
\dim C_{\rm{SU}_{n+1,\iota}}(u)+1-(\dim C_{\rm{SU}_{n,\iota}}(u)+1) \nonumber \\
=&
\dim C_{\rm{SU}_{n+1,\iota}}(u)-\dim C_{\rm{SU}_{n,\iota}}(u) \nonumber \\
\le& \dim C_{\rm{SU}_{n+1,\iota}}(1)-\dim C_{\rm{SU}_{n, \iota}}(1) \nonumber \\
=& \dim C_{\UU_{n+1, \iota}}(1)-\dim C_{\UU_{n, \iota}}(1).\nonumber
\end{align}
Thus (\ref{equ:deg2}) also holds for $(\UU_{n+1}, \UU_n)$. Then the desired inequality follows from (\ref{equ:deg1}). Since equality holds in (\ref{equ:deg1}) for $(\rm{SU}_{n+1}, \rm{SU}_n)$  if and only if $u=1$  (see \cite[lemma 6.4]{R}), it follows easily that equality holds in (\ref{equ:deg2}) for $(\UU_{n+1}, \UU_n)$ if and only if $u=1$ as well.
\end{proof}

We further have:

\begin{lemma} \label{lem:deg2}
Assume (\ref{gh}).  Then $\deg \Psi_{\alpha}(t) \le 0$, with  equality hold only if $u = 1$ and there is $s\in S_\iota$ conjugate to $\rm{diag}(s', I_{m})$, where $s'$ is a regular semisimple element of $\UU_{n-m}(\bb{F}_q)$ such that $1$ is not an eigenvalue of $s'$.
\end{lemma}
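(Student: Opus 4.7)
By Lemma \ref{lem:deg1}, we already know $\deg \Psi_{\alpha}(t) \le \delta_\iota$ with equality only when $u=1$. So the whole thing reduces to proving $\delta_\iota \le 0$, with equality forcing $S_\iota$ to contain an element of the stated special form. I will compute $\delta_\iota$ directly from its definition
\[
\delta_\iota=\dim Z_\iota+\dim \mathcal{B}_{G_\iota}-\dim \mathcal{B}_{H_\iota}-\dim S,
\]
by exploiting the eigenspace decomposition of a chosen $s\in S_\iota$ under the embedding $H=\UU_n\hookrightarrow\UU_{n+1}=G$ which extends $s$ to $\mathrm{diag}(s,1)$.

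\textbf{Step 1 (structure of the centralizers).} Pick any $s\in S_\iota$ and let $m$ be the multiplicity of the eigenvalue $1$ of $s$ on the defining $n$-dimensional Hermitian space $V_n$. Up to $\UU_n(\Fq)$-conjugacy, write $s=\mathrm{diag}(s'',I_m)$ with $s''\in\UU_{n-m}(\Fq)$ semisimple with no eigenvalue equal to $1$. Putting $C=C_{\UU_{n-m}}(s'')$ (connected, reductive), the centralizer factorisations become
\[
H_\iota=C\times \UU_m,\qquad G_\iota=C\times\UU_{m+1},
\]
the extra dimension in the second $\UU$-factor coming from adding the vector $v_0$ to the $1$-eigenspace when passing from $V_n$ to $V_{n+1}$.

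\textbf{Step 2 (computation of $\dim Z_\iota$).} Since $S$ is a maximal torus of $H_\iota$ containing $s$, it splits as $S=S_1\times S_2$ with $S_1\subset C$ and $S_2\subset \UU_m$ maximal tori. From $Z(G_\iota)=Z(C)\times Z(\UU_{m+1})$ and $Z(C)\subset S_1$ (the centre of a connected reductive group lies in every maximal torus) together with the key observation $Z(\UU_{m+1})\cap \UU_n=\{1\}$ (an element $\zeta I_{m+1}$ fixing $v_0$ forces $\zeta=1$), one concludes that $Z_\iota=Z(G_\iota)\cap S=Z(C)$, and therefore
\[
\dim Z_\iota=\dim Z(C)\le\mathrm{rank}\,C=n-m,
\]
with equality if and only if $C$ is a torus, i.e.\ $s''$ is regular semisimple in $\UU_{n-m}$.

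\textbf{Step 3 (assembling $\delta_\iota$).} The flag-variety dimensions satisfy
\[
\dim\mathcal{B}_{G_\iota}-\dim\mathcal{B}_{H_\iota}=\dim\mathcal{B}_{\UU_{m+1}}-\dim\mathcal{B}_{\UU_m}=m,
\]
while $\dim S=n$. Plugging in gives
\[
\delta_\iota=\dim Z(C)+m-n\le (n-m)+m-n=0,
\]
with equality precisely when $C$ is a torus, equivalently when $s$ is conjugate to $\mathrm{diag}(s',I_m)$ with $s'$ regular semisimple and $1$ not an eigenvalue of $s'$. Combining with Lemma \ref{lem:deg1} yields both the inequality $\deg\Psi_{\alpha}(t)\le 0$ and the claimed necessary conditions for equality.

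\textbf{Potential obstacle.} The only slightly subtle point is verifying $Z(\UU_{m+1})\cap S=\{1\}$ carefully; everything else is routine dimension counting once the eigenspace decomposition is set up. This subtlety is exactly the place where the Bessel structure (passing from $\UU_n$ to $\UU_{n+1}$ via an anisotropic line) enters the bound, and it is what prevents $\dim Z_\iota$ from gaining an extra unit from $Z(\UU_{m+1})$.
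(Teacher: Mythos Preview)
Your proof is correct and follows essentially the same route as the paper's: both arguments pick $s\in S_\iota$, factor $G_\iota$ and $H_\iota$ as $C\times\UU_{m+1}$ and $C\times\UU_m$ respectively, compute $\dim\mathcal{B}_{G_\iota}-\dim\mathcal{B}_{H_\iota}=m$, and then bound $\dim Z_\iota\le n-m$ with equality exactly when the non-unit part $s'$ is regular semisimple. Your Step~2 is in fact more explicit than the paper, which asserts $\delta_\iota=\dim Z_\iota-n+m\le 0$ directly without spelling out why $Z(\UU_{m+1})$ contributes nothing to $Z_\iota$; your observation that $Z(\UU_{m+1})\cap S=\{1\}$ (since any $\zeta I_{m+1}$ lying in the image of $\UU_m\hookrightarrow\UU_{m+1}$ forces $\zeta=1$) is precisely the justification the paper leaves implicit.
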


\begin{proof}
For a fixed $\iota$, let $s\in S_\iota$ be a semisimple element in $H^F$. Assume that $s$ has $r$ distinct eigenvalues which are not equal to $1$. Let $m$ be the multiplicity of the eigenvalue $1$ of $s$. Then $s$ conjugate to $\rm{diag}(s', I_{m})$, where $s'$ is a semisimple element of $\UU_{n-m}(\bb{F}_q)$ such that $1$ is not an eigenvalue of $s'$. Thus, $G_s$ and $H_s$ are isomorphic to $C_{\UU_{n-m}}(s')\times \UU_{m+1}$ and
$C_{\UU_{n-m}}(s')\times\UU_{m}$ respectively.
It follows that \[\dim \mathcal{B}_{G_\iota}-\dim \mathcal{B}_{H_\iota}=\dim \mathcal{B}_{ \UU_{m+1}}-\dim \mathcal{B}_{ \UU_{m}}=m.\]

We obtain that $\delta_\iota=\dim Z_\iota-n+m\le 0$, with equality hold only if $r=n-m$. In other words, $\delta_\iota=0$ if and only if $s'$ is a regular semisimple element of $\UU_{n-m}(\bb{F}_q)$ such that $1$ is not an eigenvalue of $s'$.
\end{proof}

\subsection{The leading term of $\Theta_{\alpha}(q)$}
We have known that $\deg \Theta_{\alpha}(t)\le 0$ (see \cite[(5.17)]{R}). We now show that in fact there is only one term in $\Theta_{\alpha}(q)$ of degree zero.

\begin{lemma}
Assume (\ref{gh}), $\iota'<\iota$ and that $\delta_\iota=0$. Then $\dim Z_{\iota'}<\dim Z_\iota$.
\end{lemma}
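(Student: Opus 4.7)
The plan is to use Lemma \ref{lem:deg2} to make the algebraic structure of $G_\iota$ and $Z_\iota$ completely explicit, and then appeal to the basic fact that a proper Zariski closed subgroup of a connected torus has strictly smaller dimension. Specifically, Lemma \ref{lem:deg2} furnishes an $s \in S_\iota$ conjugate to $\mathrm{diag}(s', I_m)$ with $s'$ a regular semisimple element of $\UU_{n-m}(\bb{F}_q)$ having no eigenvalue $1$; after conjugating, I may assume $s$ is literally of this form.

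With $T' := C_{\UU_{n-m}}(s')$, which is a maximal torus of $\UU_{n-m}$ of dimension $n-m$, the identifications $G_\iota = T' \times \UU_{m+1}$ and $H_\iota = T' \times \UU_m$ hold inside the decomposition $\UU_{n+1} \supset \UU_{n-m} \times \UU_{m+1}$. Since $S$ is a maximal torus of $H_\iota$ containing $T'$, it factors as $S = T' \times S_0$ for some maximal torus $S_0$ of $\UU_m$. On the other hand $Z(G_\iota) = T' \times Z(\UU_{m+1})$, and a scalar matrix $\lambda I_{m+1} \in Z(\UU_{m+1})$ lies in the subgroup $\UU_m \subset \UU_{m+1}$ only when $\lambda = 1$, so $Z(\UU_{m+1}) \cap S_0 = \{1\}$. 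Consequently
\[
Z_\iota = Z(G_\iota) \cap S = T',
\]
which is a connected torus of dimension $n - m$, in agreement with the value of $\dim Z_\iota$ read off from $\delta_\iota = 0$.

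For $\iota' < \iota$ the partial order gives $Z_{\iota'} \subset Z_\iota$, and this inclusion must be strict: if $Z_{\iota'} = Z_\iota$ then $G_{\iota'} = C_G(Z_{\iota'})^\circ = C_G(Z_\iota)^\circ = G_\iota$, contradicting $\iota' \neq \iota$. Since $Z_\iota = T'$ is a connected algebraic group, any proper closed subgroup $K \subsetneq T'$ satisfies $\dim K < \dim T'$; otherwise the identity component $K^\circ$ would be a closed connected subvariety of $T'$ of full dimension, hence all of $T'$ by irreducibility, forcing $K = T'$. Applied to $K = Z_{\iota'}$ this yields $\dim Z_{\iota'} < n - m = \dim Z_\iota$. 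The only nontrivial step is the explicit identification $Z_\iota = T'$ in the second paragraph; once that is in hand the strict dimension inequality follows from standard algebraic group generalities.
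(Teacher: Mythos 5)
Your proof is correct, and it takes a genuinely different route from the paper's. The paper argues by direct eigenvalue counting: for $s'\in S_{\iota'}$ with $r$ distinct non-$1$ eigenvalues it shows $\dim Z_{\iota'}=r$, while $\dim Z_\iota = n-m$ by Lemma~\ref{lem:deg2}, and then observes $n-m\geq r$ with equality only when all eigenvalue multiplicities collapse, which would force $\iota=\iota'$. Your proof instead pushes the structural consequence of $\delta_\iota = 0$ all the way: you identify $Z_\iota$ exactly with the connected torus $T'=C_{\UU_{n-m}}(s')$ via the product decomposition $G_\iota = T'\times\UU_{m+1}$, $S = T'\times S_0$, and the observation $Z(\UU_{m+1})\cap\UU_m = \{1\}$, and then conclude by the general fact that a proper closed subgroup of a connected algebraic group has strictly smaller dimension (strictness of the inclusion $Z_{\iota'}\subsetneq Z_\iota$ coming from the paper's identity $G_\iota = C_G(Z_\iota)^\circ$). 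What your approach buys is that it avoids the eigenvalue multiplicity bookkeeping altogether and reduces the strict inequality to abstract nonsense about connected groups; this also cleanly sidesteps the notational overloading in the paper's argument, where the symbol $m$ is implicitly reused for two a priori distinct multiplicities (that of the eigenvalue $1$ for $s'\in S_{\iota'}$ and for $s\in S_\iota$). The cost is that you have to verify the explicit identification $Z_\iota=T'$, which you do correctly, whereas the paper's route gets by with only the dimension computation.
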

\begin{proof}
Let $s'\in S_{\iota'}$.
Let $n_1,\ldots, n_r$ be the multiplicities of distinct eigenvalues of $s'$ which are not equal to $1$, and $m$ be the multiplicity of the eigenvalue $1$, so that $n_1+\cdots+n_r+m=n$. Then $\dim Z_{\iota'}=\dim Z(G_{\iota'})\cap S =r$.

Since $\iota'<\iota$, we have $G_\iota\subset G_{\iota'}$. Thus, $s\in S_{\iota}$ is a semisimple element of $G_{\iota'}$. Since $\delta_\iota=0$, by Lemma \ref{lem:deg2}, $s$ is conjugate to $\rm{diag}(s', I_{m})$, where $s'$ is a regular semisimple element of $\UU_{n-m}(\bb{F}_q)$ such that $1$ is not an eigenvalue of $s'$. It follows that $G_s\cong T\times \UU_{n+1-m}$ and $\dim Z_\iota=n-m\ge r$, where $T$ is an $F$-stable maximal torus of $\UU_{n-m}$. It is easy to check that
$\dim Z_{\iota}=r$ if and only if $n_i=1$, $i=1,\ldots,r$. However in this case $\iota=\iota'$.
\end{proof}
Recall that $f_J(t)$ is a polynomial of degree $\dim  Z_J$. The above lemma implies that $\deg f_J(t)<\deg f_{\iota}(t)$. Recall the formula (\ref{theta})
 \[
  \Theta_{\alpha}(t)=\langle \chi_v, \eta_\varsigma\rangle _{Z_\iota^F}+\sum_{J\subset I(\iota,S)^F}(-1)^{|J|}\langle \chi_v, \eta_\varsigma\rangle _{Z_J^F}\frac{f_J(t)}{f_{\iota}(t)}.
  \]
Then  the only degree zero term of $\Theta_{\alpha}(q)$  is $\langle \chi_v, \eta_\varsigma\rangle _{Z_\iota^F}$.

\subsection{The leading term of $M(t)$}
We shall find an explicit and effective formula of $M(t)$. Recall that
\[
 \langle R^G_{T,\chi},R^H_{S,\eta}\rangle _{H^F}=\sum_{\alpha}\Psi_{\alpha}(q)\Theta_{\alpha}(q).
 \]
Assuming (\ref{gh}), we have known that $\deg \Psi_{\alpha}(t)\le 0$ and $\deg \Theta_{\alpha}(t)\le 0$. So the polynomial $M(t)$ is a constant, which equals the coefficient of the leading term of $\sum_\alpha \Psi_{\alpha}(t) \Theta_{\alpha}(t)$.

 We now calculate $M(t)$ by the definitions of $\Psi_{\alpha}(t)$ and $\Theta_{\alpha}(t)$ in (\ref{theta}) and (\ref{psi}) respectively. By Lemmas \ref{lem:deg1} and Lemma \ref{lem:deg2}, only quadruples $\alpha$ with $u = 1$ and $\delta _\iota = 0$ contribute to the leading term; henceforth we assume that $\alpha$ is of this form. If we take $u = 1$, then by \cite[(5.10)]{R},
 \[
 Q_{v,1}^{G_\iota}(t)=\epsilon_G(v)t^{d_G(1)} +\mathrm{lower\ powers\ of}\ t,
 \]
 where $\epsilon_G(v)=(-1)^{\mathrm{rk}(G)+\mathrm{rk}(T_v)}$.
From \cite[Section 5.5]{R}, we know that $P_{\iota,u}(t)$ is of the form $A_\iota(u)$ times a monic polynomial in $\mathbb{Z}[t]$, where $A_\iota(u)$ is the centralizers in $H_\iota$ of all unipotent elements $u\in H_\iota^F$ for every $\iota\in I(S)^F$. By \cite[Section 5.6]{R}, for any subset $J\subset I(S)$,
 \[
 f_J(q^\nu)=|Z_J^{F^\nu}|,\quad\mathrm{for\ all\ \nu\equiv1\ mod }\ m,
 \]
 where $m$ is a positive integer divisible by the exponent of the finite group $W_G\rtimes \langle \vartheta\rangle$. Here  the actions of $\vartheta$ as automorphisms on  the set of roots $\Phi(T_0,G)$ as well as $W_G$, are induced by $F$.

 As a power series in $t$, the leading coefficient of $\Psi_{\alpha}(t)$ is
 \[
 \frac{(-1)^{\mathrm{rk}(G_\iota)+\mathrm{rk}(H_\iota)+\mathrm{rk}(T)+\mathrm{rk}(S)}}{|\bar{N}_{H}(\iota,S)^F|},
 \]
 and the leading coefficient of $\Theta_{\alpha}(t)$ is
 \[
 \langle \chi_v, \eta_\varsigma\rangle _{Z_\iota^F}.
 \]
Note that by our selection of $\iota\in I(S)^F$, we always have $j_{H_\iota}^{-1}(\mathrm{cl}(S,H))\ne\emptyset$. Thus combining the above calculations and Lemma \ref{inj}, we obtain that
 \begin{proposition}\label{proposition:A}
  Assume (\ref{gh}). Then
 \[
\langle R^G_{T,\chi},R^H_{S,\eta}\rangle _{H^F}=\sum_{\mbox{\tiny$\begin{array}{c}\iota\in I(S)^F,
\delta_\iota=0\\j_{G_\iota}^{-1}(\mathrm{cl}(T,G))\ne \emptyset
\end{array}$}}\frac{(-1)^{\mathrm{rk}(G_\iota)+\mathrm{rk}(H_\iota)+\mathrm{rk}(T)+\mathrm{rk}(S)}}{|\bar{N}_{H}(\iota,S)^F|}\langle \chi_v, \eta_\varsigma\rangle _{Z_\iota^F},
\]
where $v\in j_{G_\iota}^{-1}(\mathrm{cl}(T,G))$ and $\varsigma\in j_{H_\iota}^{-1}(\mathrm{cl}(S,H))$.
\end{proposition}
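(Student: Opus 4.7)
The plan is to start from Reeder's formula recorded in Section 4.2,
\[
\langle R^G_{T,\chi},R^H_{S,\eta}\rangle_{H^F}=\sum_\alpha \Psi_\alpha(q)\Theta_\alpha(q),
\]
and extract its leading term by combining Lemmas \ref{lem:deg1}, \ref{lem:deg2}, the lemma on $\dim Z_{\iota'}$ opening Section 4.4, and Lemma \ref{inj}. Since Reeder's theorem asserts that the above sum agrees with a multiplicity polynomial $M(q^\nu)$ for all $\nu\equiv 1\pmod m$, while Lemmas \ref{lem:deg1}--\ref{lem:deg2} together with the estimate $\deg\Theta_\alpha(t)\leq 0$ give $\deg \Psi_\alpha(t)\Theta_\alpha(t)\leq 0$ for every $\alpha$, it follows that $M(t)$ is constant. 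The desired identity is then obtained by computing $\lim_{t\to\infty}\sum_\alpha \Psi_\alpha(t)\Theta_\alpha(t)$ term by term.

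By Lemma \ref{lem:deg2}, the condition $\deg \Psi_\alpha(t)=0$ forces $u=1$ and $\delta_\iota=0$, which in turn forces the semisimple part $s\in S_\iota$ to be conjugate to $\rm{diag}(s',I_m)$ with $s'$ regular semisimple in $\UU_{n-m}(\Fq)$ not having $1$ as an eigenvalue. For such $\iota$, the lemma opening Section 4.4 shows $\deg f_J(t)<\deg f_\iota(t)$ for every $J\subset I(\iota,S)^F$, so in formula \eqref{theta} every correction term $f_J(t)/f_\iota(t)$ has strictly negative degree; the only constant contribution to $\Theta_\alpha(q)$ is therefore $\langle \chi_v,\eta_\varsigma\rangle_{Z_\iota^F}$.

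Next I would read off the leading coefficient of $\Psi_\alpha(t)$ from the defining expression \eqref{psi} under the conditions $u=1$ and $\delta_\iota=0$. Using the recalled facts that $f_\iota(t)$ is of degree $\dim Z_\iota$ with unit leading coefficient, that $Q^{G_\iota}_{v,1}(t)$ has leading term $(-1)^{\rm{rk}(G_\iota)+\rm{rk}(T)}t^{d_{G_\iota}(1)}$ and likewise for $Q^{H_\iota}_{\varsigma,1}(t)$, and that $P_{\iota,1}(t)$ is $A_\iota(1)$ times a monic polynomial of degree $\dim C_{H_\iota}(1)$, the cancellation of $t$-powers uses exactly the vanishing $\delta_\iota=0$, and what remains is the sign
\[
\frac{(-1)^{\rm{rk}(G_\iota)+\rm{rk}(H_\iota)+\rm{rk}(T)+\rm{rk}(S)}}{|\bar N_H(\iota,S)^F|}
\]
appearing in the statement. (The centralizer scalars $A_\iota(1)$ in the numerator and denominator cancel against each other once the Green polynomials are suitably normalised, as recalled in Section 4.5.)

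Finally I would deal with the fibers. For $\iota$ with $\delta_\iota=0$, the semisimple element $s\in S_\iota$ satisfies precisely the hypothesis of Lemma \ref{inj}; hence $j_{G_\iota}$ is injective, so $j_{G_\iota}^{-1}(\rm{cl}(T,G))$ is a singleton when nonempty, and similarly for $j_{H_\iota}^{-1}(\rm{cl}(S,H))$, which is automatically nonempty because $S\subset H_\iota$ remains a maximal torus of the centralizer $H_\iota$. Assembling the contributions over such $\iota$ yields the proposition. The only real obstacle is the sign and degree bookkeeping in the middle step, but because every ingredient ($\epsilon$-signs, degrees of $Q$, $f_\iota$, $P_{\iota,u}$, and the meaning of $\delta_\iota$) is spelled out in Sections 4.3--4.5, this is a direct verification rather than a conceptual difficulty.
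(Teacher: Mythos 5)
Your proposal is correct and follows essentially the same route as the paper: it takes Reeder's identity $\langle R^G_{T,\chi},R^H_{S,\eta}\rangle_{H^F}=\sum_\alpha\Psi_\alpha(q)\Theta_\alpha(q)$, uses Lemmas \ref{lem:deg1} and \ref{lem:deg2} to isolate the quadruples with $u=1$ and $\delta_\iota=0$, extracts the leading coefficients of $\Psi_\alpha$ and $\Theta_\alpha$ exactly as in Section 4.5 (with the Section 4.4 lemma killing the correction terms $f_J/f_\iota$), and invokes Lemma \ref{inj} for the singleton fibers. The bookkeeping details you defer (the $\epsilon$-signs, $A_\iota(1)$, and the nonemptiness of $j_{H_\iota}^{-1}(\mathrm{cl}(S,H))$) are handled at the same level of detail in the paper itself.
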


\subsection{Calculation of $\langle R^{G}_{T_1\times T_2,\theta\otimes1},R_{S,1}^{H}\rangle _{H(\Fq)}$}

Since every unipotent representation of $H^F$ is a sum of Deligne-Lusztig characters $R_{S,1}^{H}$, we only need to manipulate Reeder's formula for
\[
\langle R^{G}_{T,\theta_T},R_{S,1}^{H}\rangle _{H(\Fq)}.
\]
A pair $(T,\theta_T)$ for $\UU_{n+1}$ corresponds to a pair $(T^*, s)$, where $T^*$ is an $F$-stable maximal torus of the dual group $\UU_{n+1}^*\cong \UU_{n+1}$, and $s\in T^{*F}$. We say that $1\not\in (T,\theta_T)$ if 1 is not an eigenvalue of $s$.
Note that every pair $(T,\theta_T)$ is of the form $(T_1\times T_2,\theta\otimes 1 )$ such that $1\notin (T_1,\theta)$. We now turn to compute
\[
\langle R^{G}_{T_1\times T_2,\theta\otimes1},R_{S,1}^{H}\rangle _{H(\Fq)}.
\]

Thus we let $T=T_1\times T_2$ be an $F$-stable maximal torus of $\UU_{n+1}$, and $\theta\otimes1$ be a character of $T_1^F\times T_2^F$ with $1\notin (T_1,\theta)$. By Proposition \ref{proposition:A}, for any $F$-stable maximal torus $S$ of $H$,
\begin{equation}
\langle R^{G}_{T_1\times T_2,\theta\otimes1},R_{S,1}^{H}\rangle _{H(\Fq)}
=\sum_{\mbox{\tiny$\begin{array}{c}\iota\in I(S)^F,
\delta_\iota=0\\j_{G_\iota}^{-1}(\mathrm{cl}(T,G))\ne \emptyset
\end{array}$}}\frac{(-1)^{\mathrm{rk}(G_\iota)+\mathrm{rk}(H_\iota)+\mathrm{rk}(T)+\mathrm{rk}(S)}}{|\bar{N}_{H}(\iota,S)^F|}\langle (\theta\otimes1)_v, 1_\varsigma\rangle _{Z_\iota^F} \label{F1}
\end{equation}
where $v\in j_{G_\iota}^{-1}(\mathrm{cl}(T_1\times T_2,G))$ and $\varsigma\in j_{H_\iota}^{-1}(\mathrm{cl}(S,H))$.

 We now calculate the pairing $\langle (\theta\otimes1)_v, 1_\varsigma\rangle _{Z_\iota^F}$. We may conjugate $T_1\times T_2$ and $S$ to ensure that $Z_\iota \subset (T_1\times T_2) \cap S$. Then by (\ref{chi}),
\begin{align*}
& (\theta\otimes1)_v=\frac{1}{|W_{G_\iota}(T_1\times T_2)^F|}\sum_{x\in W_{G}(T_1\times T_2)^F }{}^x(\theta\otimes1)|_{Z_\iota},\\
& 1_\varsigma=\frac{1}{|W_{H_\iota}(S)^F|}\sum_{y\in W_{H}(S)^F}({}^y1)|_{Z_\iota}=\frac{|W_{H}(S)^F|}{|W_{H_\iota}(S)^F|}\cdot1.
\end{align*}
Recall Lemma \ref{3.0} that $|\bar{N}_{H}(\iota,S)^F|=\dfrac{|W_{H}(S)^F|}{|W_{H_\iota}(S)^F|}$. Rewrite (\ref{F1}) as
\begin{align}
\langle R^{G}_{T_1\times T_2,\theta\otimes1},R_{S,1}^{H}\rangle _{H(\Fq)}
&=\sum_{\mbox{\tiny$\begin{array}{c}\iota\in I(S)^F,
\delta_\iota=0\\j_{G_\iota}^{-1}(\mathrm{cl}(T,G))\ne \emptyset
\end{array}$}}\frac{(-1)^{\mathrm{rk}(G_\iota)+\mathrm{rk}(H_\iota)+\mathrm{rk}(T)+\mathrm{rk}(S)}}{|\bar{N}_{H}(\iota,S)^F|}\langle (\theta\otimes1)_v, 1_\varsigma\rangle _{Z_\iota^F} \label{F11}\\
&=\sum_{\mbox{\tiny$\begin{array}{c}\iota\in I(S)^F,
\delta_\iota=0\\j_{G_\iota}^{-1}(\mathrm{cl}(T,G))\ne \emptyset
\end{array}$}}(-1)^{\mathrm{rk}(G_\iota)+\mathrm{rk}(H_\iota)+\mathrm{rk}(T)+\mathrm{rk}(S)}\frac{|W_{T_1\times T_2,\theta\otimes1,\iota}|}{|W_{G_\iota}(T_1\times T_2)^F|},\nonumber
\end{align}
where
\begin{equation}\label{wt}
W_{T_1\times T_2,\theta\otimes1,\iota}:=\{ x\in W_{G}(T_1\times T_2)^F : {}^x(\theta\otimes1)|_{Z_\iota^F}=1\}.
\end{equation}

  It is easy to check that if $G_{\iota_1}^F$ and $G_{\iota_2}^F$ are in the same $H^F$-conjugacy class, then we have
\begin{align*}
& \delta_{\iota_1}=\delta_{\iota_2},\\
&  (-1)^{\mathrm{rk}(G_{\iota_1})+\mathrm{rk}(H_{\iota_1})}\frac{|W_{T_1\times T_2,\theta\otimes1,\iota_1}|}{|W_{G_{\iota_1}}(T_1\times T_2)^F|}=(-1)^{\mathrm{rk}(G_{\iota_2})+\mathrm{rk}(H_{\iota_2})}\frac{|W_{T_1\times T_2,\theta\otimes1,\iota_2}|}{|W_{G_{\iota_2}}(T_1\times T_2)^F|},\\
& j_{G_{\iota_1}}^{-1}(\mathrm{cl}(T,G))\ne\emptyset \Longleftrightarrow j_{G_{\iota_2}}^{-1}(\mathrm{cl}(T,G))\ne\emptyset \textrm{ for any }T.
\end{align*}
This means that the properties of $\iota$ involved in the sum of (\ref{F11}) do not change when $G_\iota^F$ varies over a $H^F$-conjugacy class.

  We now calculate the conjugacy classes of $G^F_\iota$. For two elements $\iota_1, \iota_2\in I(S)^F$, we say that $\iota_1\sim\iota_2$ if $G_{\iota_1}^F$ and $G_{\iota_2}^F$ are in the same $H^F$-conjugacy class. Denote the equivalence class of $\iota$ by $[\iota]$.  Let $[I(S)^F]$ be the set of equivalence classes in $I(S)^F$.

Let us denote a typical summand in (\ref{F11}) by $X_\iota$. Then  (\ref{F11})  can be rewritten as
\[
\sum_{\mbox{\tiny$\begin{array}{c}\iota\in I(S)^F,
\delta_\iota=0\\j_{G_\iota}^{-1}(\mathrm{cl}(T,G))\ne \emptyset
\end{array}$}}X_\iota=\sum_{\mbox{\tiny$\begin{array}{c}[\iota]\in [I(S)^F],
\delta_\iota=0\\j_{G_\iota}^{-1}(\mathrm{cl}(T,G))\ne \emptyset
\end{array}$}} \#[\iota]\cdot X_\iota.
\]

For two partition $\lambda$ and $\lambda'$ of $n$ and $n'$, respectively, we say that $\lambda'\subset\lambda$ if $\{\lambda_i'\}\subset \{\lambda_i\}$ and $\lambda'\nsubseteq\lambda$ otherwise. Here $\{\lambda_i'\}\subset \{\lambda_i\}$  means the containment of multisets of integers.

Let $\mu^S$ be a partition of $n$ corresponding to $S$. Recall Lemma \ref{lem:deg2} that if $\delta_\iota=0$, then there is $s\in S_\iota$ conjugate to $\rm{diag}(s', I_{m})$, where $s'$ is a regular semisimple element of $\UU_{n-m}(\bb{F}_q)$ such that $1$ is not an eigenvalue of $s'$. In this case
\begin{equation}\label{g}
G_\iota\cong G_s\cong T'\times \UU_m,
\end{equation}
where $T'$ is the centralizer of $s'$ in $\UU_{n-m}$.
Hence the set
\[
\{[\iota]\in[I(S)^F]:\delta_{\iota}=0\}
 \]
 is parameterized by pairs $(m,\mu^\iota)$, where $m\leq n$ is a nonnegative integer and $\mu^\iota\subset\mu^S$ is a partition of $n-m$ corresponding to $T'$. For a partition $\mu'$, if $\delta_{\iota}=0$ and $\mu^\iota=\mu'$ then we will denote $[\iota]$ by $[\iota_{\mu'}]$.

 Note that only the terms satisfying $j_{G_\iota}^{-1}(\mathrm{cl}(T_1\times T_2,G))\ne \emptyset$ contribute to (\ref{F1}).  The following lemma gives an explicit description of these terms, which follows from Lemma  \ref{inj} and (\ref{2.1}).

 \begin{lemma}\label{6.1}
Assume that $[\iota]\in[I(S)^F]$, $\delta_{\iota}=0$. Let $T$ be an $F$-stable maximal torus of $G$ which corresponds to a partition $\mu$ of $n+1$. Then $j_{G_\iota}^{-1}(\mathrm{cl}(T,G))\ne \emptyset$ if and only if $\mu^\iota\subset \mu$.
\end{lemma}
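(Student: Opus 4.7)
The plan is to combine the description of $G_\iota$ under the hypothesis $\delta_\iota = 0$ (Lemma \ref{lem:deg2} and equation (\ref{g})) with the explicit computation of $j_{G_s}$ carried out in the proof of Lemma \ref{inj}; the lemma will then follow by pure bookkeeping with partitions.

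First I would use Lemma \ref{lem:deg2} to choose a representative $s \in S_\iota$ which, viewed as an element of $H = \UU_n$, is conjugate to $\mathrm{diag}(s', I_m)$ with $s' \in \UU_{n-m}(\Fq)$ regular semisimple and having no eigenvalue equal to $1$. Embedding $s$ into $G = \UU_{n+1}$ via the standard block inclusion $H \hookrightarrow G$ produces $\mathrm{diag}(s', I_{m+1})$, which still satisfies the hypothesis of Lemma \ref{inj} (applied with $n+1, m+1$ in place of $n, m$). Consequently
\[
G_\iota \;=\; C_G(s)^\circ \;\cong\; T' \times \UU_{m+1},
\]
where $T'$ is the centralizer of $s'$ in $\UU_{n-m}$, which corresponds by definition to the partition $\mu^\iota$ of $n-m$.

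Second, since $T'$ is a torus, every $F$-stable maximal torus of $G_\iota$ has the form $T' \times T_\rho$ for some partition $\rho$ of $m+1$, where $T_\rho$ denotes the $F$-stable maximal torus of $\UU_{m+1}$ attached to $\rho$, and these exhaust $H^1(F, W_{G_\iota})$. Specializing the formula derived in the proof of Lemma \ref{inj} to our setting (with $n+1, m+1$ replacing $n, m$) then yields
\[
j_{G_\iota}\bigl(\mathrm{cl}(T' \times T_\rho, G_\iota)\bigr) \;=\; [w_{[\mu^\iota, \rho]}],
\]
i.e.\ the $F$-conjugacy class in $W_G$ corresponding to the concatenated partition $[\mu^\iota, \rho]$ of $n+1$.

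Finally, by the description of $H^1(F, W_G)$ in Subsection \ref{2.2} as a bijection with partitions of $n+1$, the class $\mathrm{cl}(T, G)$ is sent to $\mu$. Hence $j_{G_\iota}^{-1}(\mathrm{cl}(T, G))$ is nonempty precisely when there is a partition $\rho$ of $m+1$ with $[\mu^\iota, \rho] = \mu$ as multisets of positive integers, which is exactly the condition $\mu^\iota \subset \mu$, with $\rho := \mu \setminus \mu^\iota$ being the unique candidate. The cardinality check $|\mu^\iota| + |\rho| = (n-m) + (m+1) = n+1 = |\mu|$ is automatic. There is no serious obstacle here: the argument is essentially a direct transcription of the computation in the proof of Lemma \ref{inj} to the pair $(G, H) = (\UU_{n+1}, \UU_n)$, and the main point to be careful about is the index shift $m \mapsto m+1$ caused by the extra trivial eigenvalue of $s$ that appears when $s$ is regarded as an element of the larger group $G$.
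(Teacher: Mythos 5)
Your argument is correct and takes essentially the same route the paper intends: the paper justifies Lemma \ref{6.1} by citing Lemma \ref{inj} together with (\ref{2.1}), and you have simply filled in that computation of the image of $j_{G_\iota}$, namely that the $F$-stable maximal tori of $G_\iota$ are the $T'\times T_\rho$ with $\rho$ a partition of $m+1$, mapping to the classes $[\mu^\iota,\rho]$ in $H^1(F,W_G)$. Your handling of the index shift (the eigenvalue $1$ of $s$ acquiring multiplicity $m+1$ inside $\UU_{n+1}$, so that $G_\iota\cong T'\times \UU_{m+1}$) is the correct reading and is consistent with the proof of Lemma \ref{lem:deg2}.
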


  From this lemma we obtain that
 \begin{equation}\label{x}
 \begin{aligned}
\langle R^{G}_{T_1\times T_2,\theta\otimes1},R_{S,1}^{H}\rangle _{H(\Fq)}
=&\sum_{\mbox{\tiny$\begin{array}{c}[\iota]\in [I(S)^F],\delta_{\iota}=0\\
j_{G_\iota}^{-1}(\mathrm{cl}(T_1\times T_2,G))\ne \emptyset \end{array}$}}\#[\iota]\cdot X_\iota
=\sum_{\mbox{\tiny$\begin{array}{c}[\iota]\in [I(S)^F],\delta_{\iota}=0\\
\mu^\iota\subset\mu\\
\end{array}$}}\#[\iota]\cdot X_{\iota}\\
=&\sum_{\mbox{\tiny$\begin{array}{c}\mu'\subset\mu\\
\mu'\subset\mu^S\\
\end{array}$}}\#[\iota_{\mu'}]\cdot X_{\iota_{\mu'}}
\end{aligned}
 \end{equation}

We now calculate $\#[\iota_{\mu'}]$. Let $\lambda$ and $\lambda'$ are two partitions and $\lambda'\subset\lambda$. Let us write
 \begin{equation}\label{pair}
 \lambda=[n_1^{a_1}, \ldots, n_l^{a_l}]\quad \rm{and}\quad \lambda'=[n_1^{b_1},\ldots, n_l^{b_l}],
 \end{equation}
where $n_i$'s are distinct so that $0\leq b_i\leq a_i$, $i=1,\ldots, l$. We set
\begin{equation} \label{CC}
C_{\lambda,\lambda'}=\prod_{i=1}^l
{a_i \choose b_i}.
\end{equation}
For example, $C_{[2,1^2],[2,1]}=2$. Note that if $\iota\sim \iota'$, then $\mu^{\iota}=\mu^{\iota'}$. It follows that
 \begin{equation}\label{co}
\#[\iota_{\mu'}]=C_{\mu^S,\mu'}.
 \end{equation}

For two partitions $\mu^1$ and $\mu^2$, denote by $\mu=[\mu^1, \mu^2]$ the unordered partition of $|\mu^1|+|\mu^2|$ formed by taking unions of the
parts in $\mu^1$ and $\mu^2$. We say that an $F$-stable maximal torus of $\UU_{n+1}$ of the form $T=T_1\times T_2$ corresponds to a partition $\mu=[\mu^1,\mu^2]$ of $n+1$, where $|\mu^1|+|\mu^2|=n+1$, if $T_i$ is an $F$-stable maximal torus of $\UU_{|\mu^i|}$ corresponding to the partition $\mu^i$, $i=1, 2$.
Then by Lemma \ref{6.1}, (\ref{x}) and (\ref{co}), we have

 \begin{proposition}\label{p1}
  Let $T=T_1\times T_2$ be an F-stable maximal torus of $\UU_{n+1}$ corresponding to a
partition $\mu=[\mu^1,\mu^2]$ of $n+1$, and $\theta\otimes1$ be a character of $T_1^F\times T_2^F$. Let $S$ be an F-stable maximal torus of $\UU_{n}$ corresponding to a
partition $\mu^S$ of $n$. Assume that $1\notin (T_1,\theta)$. Then
  \begin{equation}\label{XD}
\langle R^{G}_{T_1\times T_2,\theta\otimes1},R_{S,1}^{H}\rangle _{H(\Fq)}=
\sum_{\mbox{\tiny$\begin{array}{c}\mu'\subset\mu^2\\
\mu'\subset\mu^S\\
\end{array}$}}C_{\mu^S,\mu'}\cdot X_{\iota_{\mu'}}
\end{equation}
where
\[
X_{\iota_{\mu'}}=(-1)^{\mathrm{rk}(G_{\iota_{\mu'}})+\mathrm{rk}(H_{\iota_{\mu'}})+\mathrm{rk}(T)+\mathrm{rk}(S)} \frac{|W_{T_1\times T_2,\theta\otimes1,\iota_{\mu'}}|}{|W_{G_{\iota_{\mu'}}}(T_1\times T_2)^F|},
\]
with $W_{T_1\times T_2,\theta\otimes1,\iota_{\mu'}}$ given by (\ref{wt}).
\end{proposition}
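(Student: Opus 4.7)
The plan is to assemble the formula from the apparatus developed in Sections \ref{sec2}--\ref{sec4}, with the only substantive remaining point being the restriction from $\mu'\subset\mu$ to $\mu'\subset\mu^2$ forced by the hypothesis $1\notin(T_1,\theta)$.

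First I would start from the identity already derived at (\ref{x}):
\[
\langle R^{G}_{T_1\times T_2,\theta\otimes 1}, R_{S,1}^{H}\rangle_{H(\Fq)} = \sum_{\substack{\mu'\subset\mu \\ \mu'\subset\mu^S}} \#[\iota_{\mu'}]\cdot X_{\iota_{\mu'}},
\]
where $\#[\iota_{\mu'}]=C_{\mu^S,\mu'}$ by (\ref{co}) and $X_{\iota_{\mu'}}$ is the summand appearing in the proposition. This identity packages Proposition \ref{proposition:A}, the pairing rearrangement leading to (\ref{F11}), and the parametrization of the equivalence classes $[\iota]$ with $\delta_\iota=0$ by sub-partitions of $\mu^S$, cut down to $\mu^\iota\subset\mu$ via Lemma \ref{6.1}.

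Next I would analyze the factor $|W_{T_1\times T_2,\theta\otimes 1,\iota_{\mu'}}|$ from (\ref{wt}), which counts Weyl elements $x\in W_G(T_1\times T_2)^F$ such that ${}^x(\theta\otimes 1)$ restricts trivially to $Z_{\iota_{\mu'}}^F$. The Weyl group permutes the Frobenius blocks of $T_1\times T_2$ indexed by parts of $\mu=[\mu^1,\mu^2]$, while $Z_{\iota_{\mu'}}$ is a subtorus whose block structure is given by the parts of $\mu'$; the triviality condition therefore forces each block of the $\mu'$-subtorus to be mapped by $x$ into a part of $\mu$ on which the corresponding coordinate of $\theta\otimes 1$ is trivial. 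Now invoking $1\notin(T_1,\theta)$ and passing to the dual pair $(T_1^\ast,s_1)$, the condition means $s_1$ has no eigenvalue $1$, so $\theta$ restricted to any subtorus of $T_1$ corresponding to a nonempty sub-partition of $\mu^1$ is nontrivial. Hence no block of $Z_{\iota_{\mu'}}$ can be sent into the $T_1$-factor without breaking the triviality, and $X_{\iota_{\mu'}}=0$ unless every part of $\mu'$ matches into $\mu^2$, i.e., unless $\mu'\subset\mu^2$. Restricting the sum to such $\mu'$ yields (\ref{XD}).

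The main obstacle will be making the vanishing claim precise, namely that $|W_{T_1\times T_2,\theta\otimes 1,\iota_{\mu'}}|=0$ when $\mu'\not\subset\mu^2$. This requires careful bookkeeping of how Weyl conjugation acts simultaneously on the embedded subtorus $Z_{\iota_{\mu'}}$ and on $\theta\otimes 1$, combined with the nontriviality of $\theta$ on every nontrivial subtorus of $T_1$ dictated by $1\notin(T_1,\theta)$. Once this step is secured, the rest is the routine bookkeeping already performed in the derivations of (\ref{F11})--(\ref{co}).
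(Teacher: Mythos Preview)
Your proposal is correct and follows essentially the same route as the paper: start from (\ref{x}), insert $\#[\iota_{\mu'}]=C_{\mu^S,\mu'}$ from (\ref{co}), and then restrict the summation from $\mu'\subset\mu$ to $\mu'\subset\mu^2$ by showing $|W_{T_1\times T_2,\theta\otimes 1,\iota_{\mu'}}|=0$ whenever $\mu'\not\subset\mu^2$. The paper simply asserts this last vanishing without argument, whereas you spell out the reason via the hypothesis $1\notin(T_1,\theta)$ and the block structure of $Z_{\iota_{\mu'}}$ inside $T$; your explanation is accurate and is exactly the content behind the paper's one-line ``Note that\ldots''.
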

\begin{proof}
By  (\ref{x}), we have
\[
 \begin{aligned}
\langle R^{G}_{T_1\times T_2,\theta\otimes1},R_{S,1}^{H}\rangle _{H(\Fq)}
=\sum_{\mbox{\tiny$\begin{array}{c}\mu'\subset\mu\\
\mu'\subset\mu^S\\
\end{array}$}}\#[\iota_{\mu'}]\cdot X_{\iota_{\mu'}}.
\end{aligned}
\]
Note that $|W_{T_1\times T_2,\theta\otimes1,\iota_{\mu'}}|=0$ if $\mu'\not\subset\mu^2$.
\end{proof}

\section{Branching laws for $\UU_n(\Fq)$} \label{sec5}

In this section we study the branching of unipotent representations of finite unitary groups. We will prove the following result, which is the Bessel case of Theorem \ref{main}.

\begin{theorem}\label{th1}
Assume that $n > m$ and  $n-m$ is odd. Let $\lambda$ be a partition of $n$ into $k$ rows, and $\lambda'$ be the partition of $n-k$ obtained by removing the first column of $\lambda$. Let $\pi'$ be an irreducible representation of $\UU_m(\Fq)$. Then the following hold.

(i) If $m<n-k$, then
\[
m(\pi_\lambda, \pi') = 0.
\]

(ii) If $k$ is odd and $m=n-k$, then
\[
m(\pi_\lambda, \pi')=\left\{
\begin{array}{ll}
1, &  \textrm{if } \pi'\cong \pi_{\lambda'},\\
0, & \textrm{otherwise.}
\end{array}\right.
\]
\end{theorem}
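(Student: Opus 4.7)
The plan is to combine the parabolic-induction reduction in Proposition \ref{prop6.2} with the explicit inner-product formula of Proposition \ref{p1}, and then perform a combinatorial identification via symmetric-group character theory.

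First I would invoke Proposition \ref{prop6.2}, which rewrites
\[
m(\pi_\lambda, \pi') \;=\; \bigl\langle I^{\UU_{n+1}}_{P}(\tau \otimes \pi'),\; \pi_\lambda \bigr\rangle_{\UU_n(\Fq)},
\]
where $P \subset \UU_{n+1}$ is a parabolic subgroup whose Levi factor has the form $\mathrm{Res}_{\mathbb{F}_{q^2}/\Fq}\GGL_{\ell+1} \times \UU_m$ with $\ell = (n - m - 1)/2$, and $\tau$ is an irreducible cuspidal representation of $\GGL_{\ell+1}(\mathbb{F}_{q^2})$. By the cuspidal classification recalled in Section \ref{sec3}, $\tau = \pm R^{\GGL_{\ell+1}}_{T_1, \theta}$ for some $F$-stable minisotropic torus $T_1$ and a regular character $\theta$ with $1 \notin (T_1, \theta)$.

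Next I would expand $\pi_\lambda$ and $\pi'$ as virtual sums of Deligne-Lusztig characters by Theorem \ref{thm3.3}, writing
\[
\pi_\lambda = \frac{1}{|S_n|}\sum_{w \in S_n}\sigma_\lambda(w w_0)\, R^{\UU_n}_{T_w, 1},
\]
together with an analogous expansion for $\pi'$. Proposition \ref{3.1} identifies the parabolic induction of $\tau \otimes R^{\UU_m}_{T_2, 1}$ with the Deligne-Lusztig character $R^{\UU_{n+1}}_{T_1 \times T_2,\, \theta \otimes 1}$, so the problem reduces to evaluating sums of inner products
\[
\bigl\langle R^{\UU_{n+1}}_{T_1 \times T_2,\; \theta \otimes 1},\; R^{\UU_n}_{T_w, 1} \bigr\rangle_{\UU_n(\Fq)},
\]
which are given by (\ref{XD}). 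Substituting expresses $m(\pi_\lambda, \pi')$ as a finite sum over subpartitions $\mu' \subset \mu^2 \cap \mu^S$, weighted by the binomial coefficients $C_{\mu^S, \mu'}$ of (\ref{CC}), the signs $(-1)^{\mathrm{rk}(G_{\iota_{\mu'}}) + \mathrm{rk}(H_{\iota_{\mu'}}) + \mathrm{rk}(T) + \mathrm{rk}(S)}$, and the Weyl-stabilizer ratios $|W_{T_1 \times T_2, \theta \otimes 1, \iota_{\mu'}}|/|W_{G_{\iota_{\mu'}}}(T_1 \times T_2)^F|$.

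The final step is combinatorial. Summing the Reeder expression against $\sigma_\lambda(w w_0)$ for $w \in S_n$ and against the cycle-type data defining $\pi'$ for $u \in S_m$, the double sum should collapse by symmetric-group orthogonality into a character value of $\sigma_\lambda$ on a specific cycle type built from $\mu'$ and the $\ell+1$ parts coming from $T_1$. In case (i) the constraints $\mu' \subset \mu^2 \cap \mu^S$ together with $m < n-k$ leave insufficient mass to accommodate the $k$ parts produced by the first column of $\lambda$, so every surviving term vanishes. In case (ii) the unique surviving $\mu'$ is the cycle type read off from $\lambda'$, and orthogonality of irreducible characters of $S_{n-k}$ singles out $\pi_{\lambda'}$ as the unique constituent, with multiplicity one.

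The main obstacle is this last step: organizing the Reeder sum so that the binomial weights $C_{\mu^S,\mu'}$, the sign factors, and the Weyl-stabilizer ratios combine cleanly with the Frobenius formula for $\sigma_\lambda$ into an expression where Murnaghan--Nakayama and $S_n$-orthogonality apply. I expect this will require an induction on $k$ (or on $\ell$) that peels off one column of $\lambda$ at a time, and careful tracking of the parity of $k$ to ensure that only the odd-$k$ case yields a nonzero contribution at $m = n-k$.
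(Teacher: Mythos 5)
Your overall route is the same as the paper's: reduce via Proposition \ref{prop6.2}, expand in Deligne--Lusztig characters, feed the pairing into the extended Reeder formula (Proposition \ref{p1}), and finish with symmetric-group combinatorics. But the step you flag as ``the main obstacle'' is precisely the heart of the argument (Propositions \ref{prop6.5}, \ref{prop6.8}, \ref{prop6.9}), and as described it is not yet a proof. No induction on $k$ peeling off columns is needed: after collecting conjugacy-class sizes, the binomial weights $C_{\mu^S,\mu'}$ and the Weyl-group orders cancel, and the inner sum becomes $\sum_{w\in S_{n-|\mu'|}}\sigma_{{}^t\lambda}((w_{\mu'},w))$, i.e.\ a pairing of $\sigma_{{}^t\lambda}$ restricted to $S_{|\mu'|}\times S_{n-|\mu'|}$ against something trivial on the second factor. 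Since the first row of ${}^t\lambda$ has length $k$, Lemma \ref{lem5.6} kills this whenever $n-|\mu'|>k$ (giving (i)), and when $|\mu'|=n-k$ it evaluates to $|S_{n-|\mu'|}|\,\sigma_{{}^t\lambda'}(w_{\mu'})$. Note also that in case (ii) the surviving $\mu'$ is not ``the cycle type read off from $\lambda'$'': it is $\mu'=\mu^2$ itself, for every cycle type $\mu^2$ of $S_m$; the identification of $\pi_{\lambda'}$ only appears afterwards, when you sum over $\mu^2$ against the expansion of $\pi'$ and use orthogonality $\langle\sigma_{{}^t\nu},\sigma_{{}^t\lambda'}\rangle_{S_m}$.

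A second genuine gap: in case (ii) the theorem allows an arbitrary irreducible $\pi'$ of $\UU_m(\Fq)$, but Theorem \ref{thm3.3} gives an expansion into $R_{T_w,1}$ only for unipotent representations, so your ``analogous expansion for $\pi'$'' and the final orthogonality argument only treat unipotent $\pi'$. For non-unipotent $\pi'$ one must argue separately that the multiplicity vanishes; the paper does this by writing every pair in the geometric conjugacy class of $\pi'$ as $(T_1'\times T_2',\theta'\otimes 1)$ with $1\notin(T_1',\theta')$, absorbing $T_1'$ into the cuspidal torus $T_1$, and then applying the vanishing of part (i) because the remaining unipotent part has size strictly less than $n-k$. (Relatedly, you should record that $\tau$ must be taken nontrivial when the general linear factor is $\GG_1$, i.e.\ when $n-m=1$, so that $1\notin(T_1,\theta)$ and Proposition \ref{prop6.2} applies.) With these two points supplied, your plan matches the paper's proof.
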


\subsection{Basic case} In this subsection we first show that parabolic induction preserves multiplicities, and thereby make a reduction to the basic case. Then we explore Reeder's multiplicity formula in details in this case.
 Put
\begin{equation}
\GG_\ell:=\mathrm{Res}_{\mathbb{F}_{q^2}/\Fq}\GGL_\ell,
\end{equation}
so that $\GG_\ell(\Fq)=\mathrm{GL}_\ell(\mathbb{F}_{q^2})$. Let $P$ be an $F$-stable maximal parabolic subgroup of $\UU_{n+1}$ with Levi factor $\GG_\ell\times \UU_m$ (so that $n+1=m+2\ell$), and $\tau$ be an irreducible cuspidal representation of $\GGL_\ell(\mathbb{F}_{q^2})$. From \cite[Proposition 5.3]{GGP2}, we know that parabolic induction preserves multiplicities between cuspidal representations, namely,
\[
\langle \pi\otimes\bar{\nu}, \pi'\rangle _{H(\Fq)}=\langle I^{\UU_{n+1}}_{P}(\tau\otimes\pi'),\pi\rangle _{\UU_{n}(\Fq)}
\]
for cuspidal representations $\pi$ and $\pi'$ of $\UU_n(\Fq)$ and $\UU_m(\Fq)$ respectively. In the same manner, we have the following analog when $\pi$ is unipotent, which reduces the
calculation to the basic case.

\begin{proposition}\label{prop6.2}
Let $\pi$ be an irreducible unipotent representation of $\UU_n(\Fq)$, and $\pi'$ be an irreducible representation of $\UU_m(\Fq)$ with $n > m$ but $m \not\equiv n \ \mathrm{mod} \ 2$. Let $P$ be an $F$-stable maximal parabolic subgroup of $\UU_{n+1}$ with Levi factor $\GG_\ell \times \UU_m$ (so that $m + 2\ell = n + 1$), and $\tau$ be an irreducible cuspidal representation of $\GG_\ell\fq$ which is nontrivial if $\ell=1$. Then we have
\[
m(\pi, \pi')=\langle \pi\otimes \bar{\nu}, \pi'\rangle _{H(\Fq)}=\langle I^{\UU_{n+1}}_{P}(\tau\otimes\pi'),\pi\rangle _{\UU_n(\Fq)},
\]
where the data $(H,\nu)$ is given by (\ref{hnu}).
\end{proposition}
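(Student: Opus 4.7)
The plan is to decompose $\mathrm{Res}^{\UU_{n+1}(\Fq)}_{\UU_n(\Fq)} I^{\UU_{n+1}}_P(\tau\otimes\pi')$ by the Mackey formula and match the resulting terms, orbit by orbit, with the Bessel pairing on the left-hand side. This mirrors the philosophy of the $p$-adic argument of \cite[Theorem 15.1]{GGP1} and its cuspidal precursor \cite[Proposition 5.3]{GGP2}; in the finite-field setting the steps reduce to standard manipulations with finite groups.

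Concretely, I would first use Frobenius reciprocity to write
\[
\langle I^{\UU_{n+1}}_P(\tau\otimes\pi'),\pi\rangle_{\UU_n(\Fq)}
=\dim \mathrm{Hom}_{\UU_n(\Fq)}\bigl(\mathrm{Res}^{\UU_{n+1}(\Fq)}_{\UU_n(\Fq)} \mathrm{Ind}^{\UU_{n+1}(\Fq)}_{P(\Fq)}(\tau\otimes\pi'),\pi\bigr),
\]
and then apply the Mackey formula to decompose the restriction as a sum over $x\in P(\Fq)\backslash \UU_{n+1}(\Fq)/\UU_n(\Fq)$ of representations $\mathrm{Ind}^{\UU_n(\Fq)}_{\UU_n(\Fq)\cap x^{-1}P(\Fq)x}\bigl({}^{x}(\tau\otimes\pi')\bigr)$. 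I would then enumerate the double cosets geometrically: since $\UU_n$ is the stabilizer in $\UU_{n+1}$ of an anisotropic vector $w_0\in V_{n+1}$, the coset space $\UU_{n+1}/\UU_n$ is identified with a single $\UU_{n+1}$-orbit of vectors, on which $P$ acts with only finitely many orbits, classified by the relative position of $w_0$ with respect to the isotropic flag $X_1\subset\cdots\subset X_\ell$ stabilized by $P$ and its dual.

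For every non-distinguished orbit, the stabilizer $\UU_n\cap x^{-1}Px$ projects to a proper parabolic subgroup of the Levi factor $\GG_\ell$, so cuspidality of $\tau$ forces the corresponding twisted Jacquet module to vanish, killing the contribution to the Hom pairing. The boundary case where $\ell=1$ (and $\GG_1$ has no proper parabolic) is ruled out by the hypothesis that $\tau$ is nontrivial. For the one surviving (distinguished) orbit, an explicit computation with the unipotent radical of $P$ identifies
\[
\mathrm{Ind}^{\UU_n(\Fq)}_{\UU_n(\Fq)\cap x^{-1}P(\Fq)x}\bigl({}^{x}(\tau\otimes\pi')\bigr)\cong \mathrm{Ind}^{\UU_n(\Fq)}_{H(\Fq)}(\bar{\nu}\otimes\pi'),
\]
where $H(\Fq)=\UU_{m}(\Fq)\ltimes N_{\UUl{p}_{\ell-1}}(\Fq)$ and the Bessel character $\bar{\nu}=\overline{\psi_{\UUl{p}_{\ell-1},v_0}}$ emerges from restricting the generic character of the unipotent radical of $P$ to $H$; here $\tau$ is absorbed by a central torus piece in the stabilizer that acts as a matching character. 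A final application of Frobenius reciprocity then yields $\langle \pi\otimes\bar{\nu},\pi'\rangle_{H(\Fq)}$, as required.

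The main obstacle is twofold. First, the explicit geometric enumeration of the $P$-orbits on $\UU_{n+1}/\UU_n$ and the coordinate-level verification that the distinguished orbit produces precisely the Bessel data $(H,\nu)$ with the correct generic character; second, showing that every non-distinguished orbit vanishes on the nose, which requires checking that the projection to $\GG_\ell$ is genuinely a proper parabolic in each case and that the Frobenius twist ${}^x(\cdot)$ does not spoil the Jacquet-module vanishing coming from cuspidality of $\tau$.
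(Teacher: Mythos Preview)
Your overall strategy---Mackey decomposition of $\mathrm{Res}^{\UU_{n+1}}_{\UU_n} I^{\UU_{n+1}}_P(\tau\otimes\pi')$ followed by an orbit-by-orbit analysis---is exactly the route the paper takes, adapting \cite[Theorem~15.1]{GGP1} and \cite[Proposition~5.3]{GGP2}, and the identification of the open orbit with the Bessel data is correct. The gap is in your mechanism for killing the non-open orbit: it is not cuspidality of $\tau$ that does it, and your argument never uses the hypothesis that $\pi$ is unipotent, which is precisely what is needed.

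Concretely, consider the closed orbit where the isotropic $\ell$-space $X_\ell$ lies entirely inside $V_n=w_0^\perp$. For this orbit the stabilizer $\UU_n\cap P$ is the parabolic $P'\subset\UU_n$ stabilizing $X_\ell$, with Levi $\GG_\ell\times\UU_{m-1}$: the $\GG_\ell$ factor is \emph{intact}, not a proper parabolic thereof, so cuspidality of $\tau$ yields no Jacquet-module vanishing. The contribution of this orbit is
\[
\bigl\langle I^{\UU_n}_{P'}\bigl(\tau\otimes(\pi'|_{\UU_{m-1}(\Fq)})\bigr),\pi\bigr\rangle_{\UU_n(\Fq)}.
\]
In \cite{GGP2} this vanishes because $\pi$ is cuspidal. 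Here $\pi$ is only unipotent, and the paper's argument is different: the induced representation $I^{\UU_n}_{P'}(\tau\otimes\sigma)$ has no unipotent constituents as soon as $\tau$ is not unipotent; and that is exactly what the hypotheses on $\tau$ guarantee, since $\GG_\ell(\Fq)=\GGL_\ell(\mathbb{F}_{q^2})$ has no unipotent cuspidal representation for $\ell>1$, while for $\ell=1$ you assumed $\tau$ nontrivial. Hence the pairing against the unipotent $\pi$ is zero. Replace your ``proper parabolic of $\GG_\ell$'' claim with this argument and the proof goes through.
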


\begin{proof}
It can be proved in the same way as \cite[Theorem 15.1]{GGP1}. The cuspidality assumption of $\pi$ in \cite[Proposition 5.3]{GGP2} was used to obtain the following statement: for an $F$-stable maximal parabolic subgroup $P'$ of $\UU_n$ with Levi factor $\GG_\ell \times \UU_{m-1}$,
\[
\langle I^{\UU_n}_{P'}\left(\tau\otimes(\pi'|_{\UU_{m-1}(\Fq)})\right),\pi\rangle _{\UU_n(\Fq)}=0.
\]
Since in our case $\pi$ is unipotent, this  multiplicity is nonzero
only if $\tau$ and $\pi'|_{\UU_{m-1}(\Fq)}$ are both unipotent. It is well known that $\GG_\ell\fq$, $\ell>1$ has no unipotent cuspidal representations. By the assumption on $\tau$, it is not unipotent. Therefore the above multiplicity is zero.
The rest of the proof is the same as that of \cite[Theorem 15.1]{GGP1}.
\end{proof}

This result shows that  to calculate $\langle \pi_{\lambda}\otimes \bar{\nu} ,\pi'\rangle _{H(\Fq)}$, we only need to calculate $\langle I^{\UU_{n+1}}_{P} (\tau\otimes\pi'),\pm R^{\UU_n}_\lambda\rangle _{\UU_{n}(\Fq)}$ (cf. Theorem \ref{thm3.3}).
Since every representation of $\UU_{n+1}$ is a sum of Deligne-Lusztig characters, we now compute
\begin{equation}\label{equ6.1}
\langle R^{\UU_{n+1}}_{T_1\times T_2,\theta\otimes1},R^{\UU_n}_{\lambda}\rangle _{\UU_{n}(\Fq)}=\frac{1}{|S_n|}\sum_{w\in S_n}\sigma_\lambda(ww_0)\langle R^{\UU_{n+1}}_{T_1\times T_2,\theta\otimes1}, R_{T_w,1}^{\UU_n}\rangle _{\UU_{n}(\Fq)},
\end{equation}
 where $T_1$ is an $F$-stable minisotropic maximal torus of $\GG_\ell$, $T_2$ is an $F$-stable maximal torus of $\UU_{m}$, and $\theta$ is a regular character of $T_1^F$.

\begin{proposition}\label{prop6.5}
Let $T=T_1\times T_2$ be an $F$-stable maximal torus of $\UU_{n+1}$ corresponding to a partition $\mu=[\mu^1,\mu^2]$ of $n+1$.  Assume that
$1 \notin (T_1,\theta)$ and $\lambda$ is a partition of $n$. Then
\[
\begin{aligned}
\langle R^{\UU_{n+1}}_{T_1\times T_2,\theta\otimes1},R^{\UU_n}_\lambda\rangle _{\UU_{n}(\Fq)}=&\frac{(-1)^{\mathrm{rk}(T)}}{|S_n|}
\sum_{\mu'\subset\mu^2}(-1)^{n-|\mu'|}\frac{|W_{T_1\times T_2,\theta\otimes1,\iota_{\mu'}}|}{|W_{G_{\iota_{\mu'}}}(T_1\times T_2)^F|}
\\
&\times\sum_{\mu^*}\sum_{\mbox{\tiny$\begin{array}{c}w\in S_n\\
w\sim [\mu',\mu^*]\end{array}$}}
\sigma_\lambda(w)\rm{sgn}(w) C_{[\mu',\mu^*],\mu'}
\end{aligned}
\]
where $\mu^*$ runs over the partitions of $n-|\mu'|$ and $w\sim [\mu',\mu^*]$ means that $w$ corresponds to the partition $[\mu',\mu^*]$.
\end{proposition}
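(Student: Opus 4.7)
The plan is to insert the explicit multiplicity formula of Proposition \ref{p1} into the Deligne--Lusztig expansion \eqref{equ6.1} and then perform a change of variable $w\mapsto ww_0$ on $S_n$. All the substance is in sign bookkeeping; the structural identity is provided directly by Proposition \ref{p1}.

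First I would substitute: for each $w\in S_n$, let $\mu^S(w)$ denote the partition of $n$ corresponding to $T_w$, namely the cycle type of $ww_0$. Proposition \ref{p1} yields
\[
\langle R^{\UU_{n+1}}_{T_1\times T_2,\theta\otimes 1},R^{\UU_n}_{T_w,1}\rangle_{\UU_n(\Fq)}
=\sum_{\mu'\subset\mu^2\cap\mu^S(w)}C_{\mu^S(w),\mu'}\,X_{\iota_{\mu'}},
\]
so that \eqref{equ6.1} becomes a double sum over $w$ and $\mu'$; swapping the order of summation reduces the problem to understanding $X_{\iota_{\mu'}}$ explicitly as a function of the cycle type of $ww_0$.

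The key computation is sign tracking. From \eqref{g} we have $G_{\iota_{\mu'}}\cong T'\times \UU_{m+1}$ and $H_{\iota_{\mu'}}\cong T'\times \UU_m$ with $m=n-|\mu'|$, hence
\[
\rm{rk}(G_{\iota_{\mu'}})+\rm{rk}(H_{\iota_{\mu'}})=2\,\rm{rk}(T')+\lfloor(m+1)/2\rfloor+\lfloor m/2\rfloor\equiv m=n-|\mu'|\pmod 2.
\]
From the description of $T_w^F$ via the cycle decomposition of $ww_0$ (a cycle of even length $d$ contributes a factor $\bb{F}_{q^d}^*$ of $\Fq$-rank $1$, a cycle of odd length $d$ contributes an anisotropic norm-one torus of $\Fq$-rank $0$), the $\Fq$-rank of $T_w$ equals the number of even parts of $\mu^S(w)$. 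Since the sign of any permutation equals $(-1)$ to the number of its even-length cycles, applied to $ww_0$ this gives $(-1)^{\rm{rk}(T_w)}=\rm{sgn}(ww_0)$, and therefore
\[
X_{\iota_{\mu'}}=(-1)^{\rm{rk}(T)}\,(-1)^{n-|\mu'|}\,\rm{sgn}(ww_0)\,\frac{|W_{T_1\times T_2,\theta\otimes 1,\iota_{\mu'}}|}{|W_{G_{\iota_{\mu'}}}(T_1\times T_2)^F|}.
\]

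The final step is the change of variable $w'=ww_0$, which converts $\sigma_\lambda(ww_0)\,\rm{sgn}(ww_0)$ into $\sigma_\lambda(w')\,\rm{sgn}(w')$ and $\mu^S(w)$ into the cycle type of $w'$. Partitioning the inner sum according to the cycle type of $w'$, written as $[\mu',\mu^*]$ with $\mu^*$ a partition of $n-|\mu'|$, yields precisely the formula asserted in the proposition. The only genuinely delicate point in the entire argument is the sign identification $(-1)^{\rm{rk}(T_w)}=\rm{sgn}(ww_0)$; once this is in hand, the remaining manipulations are routine assembly.
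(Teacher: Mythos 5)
Your proposal is correct and follows essentially the same route as the paper: substitute Proposition \ref{p1} into the expansion (\ref{equ6.1}), interchange the sums over $w$ and $\mu'$, verify that $(-1)^{\mathrm{rk}(G_{\iota_{\mu'}})+\mathrm{rk}(H_{\iota_{\mu'}})}=(-1)^{n-|\mu'|}$ and $(-1)^{\mathrm{rk}(T_w)}=\mathrm{sgn}(ww_0)$, and then regroup via $w\mapsto ww_0$ by cycle type $[\mu',\mu^*]$. Your explicit justifications of the two sign identities (via the ranks of $T'\times\UU_{m+1}$, $T'\times\UU_m$ and the even-cycle count for $T_w$) are exactly the facts the paper invokes, so there is nothing further to add.
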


This proposition gives a combinatorial formula in terms of  the binomial coefficients
 (\ref{CC}), various partitions and Weyl group orders, which roughly asserts that to evaluate the multiplicity $\langle R^{\UU_{n+1}}_{T,\theta_T},R_{\lambda}^{\UU_n}\rangle _{\UU_n(\Fq)}$, one may separate the unipotent and non-unipotent parts in $\theta_T$. It is a general result and  in particular we do not assume that $T=T_1\times T_2$ is contained in an $F$-stable parabolic subgroup. We will see in Proposition \ref{prop6.8} below that for most $\mu^2$ satisfying $|\mu^2|\le n-k$, where $k$ is the number of rows in $\lambda$, one has
 \[
 \sum_{\mu^*}\sum_{\mbox{\tiny$\begin{array}{c}w\in S_n\\
w\sim [\mu',\mu^*]\end{array}$}}
\sigma_\lambda(w)\rm{sgn}(w) C_{[\mu',\mu^*],\mu'}=0.
 \]
Hence in these cases from Proposition \ref{prop6.5} it follows that
\[
\langle R^{\UU_{n+1}}_{T_1\times T_2,\theta\otimes1},R^{\UU_n}_\lambda\rangle _{\UU_{n}(\Fq)}=0.
\]
The combinatorial computation using the result in the nonvanishing case, combined with Proposition \ref{prop6.2},  will eventually yield exactly the expected multiplicity in Proposition \ref{prop6.9}.

\begin{proof}Let $\mu^w$ denote the partition of $n$ corresponding to $T_w$. By Proposition \ref{p1}, we have
\[
\begin{aligned}
&\langle R^{\UU_{n+1}}_{T_1\times T_2,\theta\otimes1},R^{\UU_n}_{\lambda}\rangle _{\UU_{n}(\Fq)}\\
=&\frac{1}{|S_n|}\sum_{w\in S_n}\sigma_\lambda(ww_0)\langle R^{\UU_{n+1}}_{T_1\times T_2,\theta\otimes1}, R_{T_w,1}^{\UU_n}\rangle _{\UU_{n}(\Fq)}\\
=&\frac{1}{|S_n|}\sum_{w\in S_n}\sigma_\lambda(ww_0)\sum_{\mbox{\tiny$\begin{array}{c}\mu'\subset\mu^2\\
\mu'\subset\mu^w\\
\end{array}$}}(-1)^{\mathrm{rk}(G_{\iota_{\mu'}})+\mathrm{rk}(H_{\iota_{\mu'}})+\mathrm{rk}(T)+\mathrm{rk}(T_w)}C_{\mu^w,\mu'} \frac{|W_{T_1\times T_2,\theta\otimes1,\iota_{\mu'}}|}{|W_{G_{\iota_{\mu'}}}(T_1\times T_2)^F|}\\
=&\frac{1}{|S_n|}
\sum_{\mu'\subset\mu^2}
\sum_{\mbox{\tiny$\begin{array}{c}w\in S_n\\
\mu^w\supset\mu'\\
\end{array}$}}
\sigma_\lambda(ww_0)(-1)^{\mathrm{rk}(G_{\iota_{\mu'}})+\mathrm{rk}(H_{\iota_{\mu'}})+\mathrm{rk}(T)+\mathrm{rk}(T_w)} C_{\mu^w,\mu'}
\frac{|W_{T_1\times T_2,\theta\otimes1,\iota_{\mu'}}|}{|W_{G_{\iota_{\mu'}}}(T_1\times T_2)^F|}\\
=&\frac{(-1)^{\mathrm{rk}(T)}}{|S_n|}
\sum_{\mu'\subset\mu^2}(-1)^{\mathrm{rk}(G_{\iota_{\mu'}})+\mathrm{rk}(H_{\iota_{\mu'}})}\frac{|W_{T_1\times T_2,\theta\otimes1,\iota_{\mu'}}|}{|W_{G_{\iota_{\mu'}}}(T_1\times T_2)^F|}
\sum_{\mbox{\tiny$\begin{array}{c}w\in S_n\\
\mu^w\supset\mu'\\
\end{array}$}}
\sigma_\lambda(ww_0)(-1)^{\mathrm{rk}(T_w)} C_{\mu^w,\mu'}\\
=&\frac{(-1)^{\mathrm{rk}(T)}}{|S_n|}
\sum_{\mu'\subset\mu^2}(-1)^{\mathrm{rk}(G_{\iota_{\mu'}})+\mathrm{rk}(H_{\iota_{\mu'}})}\frac{|W_{T_1\times T_2,\theta\otimes1,\iota_{\mu'}}|}{|W_{G_{\iota_{\mu'}}}(T_1\times T_2)^F|}
\sum_{\mbox{\tiny$\begin{array}{c}w\in S_n\\
\mu^w\supset\mu'\\
\end{array}$}}
\sigma_\lambda(ww_0)\rm{sgn}(ww_0) C_{\mu^w,\mu'}.
\end{aligned}
\]

By Lemma \ref{6.1}, (\ref{g}) and using the fact that
 \[
\mathrm{rk}(\UU_n)-\mathrm{rk}(\UU_{n-1})=\left\{\begin{array}{ll} 0, & \textrm{if }n\textrm{ is odd},\\
1, & \textrm{if }n\textrm{ is even},
\end{array}\right.
\]
we deduce that
\[
(-1)^{\mathrm{rk}(G_{\iota_{\mu'}})+\mathrm{rk}(H_{\iota_{\mu'}})}=(-1)^{n-|\mu'|}
\]
Then
\[
\begin{aligned}
&\frac{(-1)^{\mathrm{rk}(T)}}{|S_n|}
\sum_{\mu'\subset\mu^2}(-1)^{\mathrm{rk}(G_{\iota_{\mu'}})+\mathrm{rk}(H_{\iota_{\mu'}})}\frac{|W_{T_1\times T_2,\theta\otimes1,\iota_{\mu'}}|}{|W_{G_{\iota_{\mu'}}}(T_1\times T_2)^F|}
\sum_{\mbox{\tiny$\begin{array}{c}w\in S_n\\
\mu^w\supset\mu'\\
\end{array}$}}
\sigma_\lambda(ww_0)\rm{sgn}(ww_0) C_{\mu^w,\mu'}\\
=&\frac{(-1)^{\mathrm{rk}(T)}}{|S_n|}
\sum_{\mu'\subset\mu^2}(-1)^{n-|\mu'|}\frac{|W_{T_1\times T_2,\theta\otimes1,\iota_{\mu'}}|}{|W_{G_{\iota_{\mu'}}}(T_1\times T_2)^F|}
\sum_{\mbox{\tiny$\begin{array}{c}w\in S_n\\
\mu^w\supset\mu'\\
\end{array}$}}
\sigma_\lambda(ww_0)\rm{sgn}(ww_0) C_{\mu^w,\mu'}\\
=&\frac{(-1)^{\mathrm{rk}(T)}}{|S_n|}
\sum_{\mu'\subset\mu^2}(-1)^{n-|\mu'|}\frac{|W_{T_1\times T_2,\theta\otimes1,\iota_{\mu'}}|}{|W_{G_{\iota_{\mu'}}}(T_1\times T_2)^F|}
\sum_{\mu^*}\sum_{\mbox{\tiny$\begin{array}{c}w\in S_n\\
w\sim [\mu',\mu^*]\end{array}$}}
\sigma_\lambda(w)\rm{sgn}(w) C_{[\mu',\mu^*],\mu'}
\end{aligned}
\]
where $\mu^*$ runs over the partitions of $n-|\mu'|$, and $w\sim [\mu',\mu^*]$ means that $w$ corresponds to the partition $[\mu',\mu^*]$.
\end{proof}

\subsection{Proof of Theorem \ref{th1}}
We keep the notations $T_1$, $T_2$ and $\mu=[\mu^1,\mu^2]$ in Proposition \ref{prop6.5}. For a fixed $\mu'\subset\mu^2$, let $T_{\mu'}$ be the $F$-stable maximal torus of $\UU_{|\mu'|}$ corresponding to the partition $\mu'$. To ease notations,  below we write $W_n(T)$ for $W_{\UU_n}(T)$. Then
\begin{equation}
|W_{T_1\times T_2,\theta\otimes1,\iota_{\mu'}}|=C_{\mu^2,\mu'}|W_{|\mu'|}(T_{\mu'})^F|\cdot
|W_{G_{\iota_{\mu'}}}(T_1\times T_2)^F|.
\end{equation}
In particular, if $\mu'=\mu^2$, then
\begin{equation}\label{eq-5.4}
|W_{T_1\times T_2,\theta\otimes1,\iota_{\mu^2}}|=|W_{|\mu^2|}(T_2)^F|\cdot
|W_{n+1-|\mu^2|}(T_1)^F|.
\end{equation}

By the Littlewood-Richardson rule, we have the following result.

\begin{lemma}\label{lem5.6}
Let $\lambda=[\lambda_1,\ldots,\lambda_\ell]$ be a partition of $n$. If $m>\lambda_1$, then
\[
\langle\sigma_{\lambda},1\rangle_{S_m}=0.
\]
Let $\lambda'=[\lambda_2,\ldots, \lambda_\ell]$ be the partition of $n-\lambda_1$ obtained by
removing the first row of $\lambda$, and $\sigma$ be an irreducible representation of $S_{n-\lambda_1}$. Then
\[
\langle\sigma_{\lambda},\sigma\otimes1\rangle_{S_{n-\lambda_1}\times S_{\lambda_1}}=\left\{ \begin{array}{ll}1, & \textrm{if }\sigma=\sigma_{\lambda'}, \\
0, & \textrm{otherwise.}\end{array}\right.
\]
In particular,
\[
\langle\sigma_{\lambda},1\rangle_{S_{\lambda_1}}=1.
\]
\end{lemma}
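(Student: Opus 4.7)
The plan is to invoke Pieri's rule for the symmetric group and then extract all three claims from a single combinatorial bound. First I would recall that
\[
\mathrm{Ind}_{S_{n-m}\times S_m}^{S_n}(\sigma_\mu\otimes 1)=\bigoplus_{\lambda}\sigma_\lambda,
\]
summed over partitions $\lambda$ of $n$ for which $\lambda/\mu$ is a horizontal strip of size $m$, i.e.\ $\mu\subset\lambda$ with the interleaving $\lambda_i\ge\mu_i\ge\lambda_{i+1}$ for all $i$. Summing the inequalities $\lambda_i-\mu_i\le\lambda_i-\lambda_{i+1}$ telescopes to $|\lambda/\mu|\le\lambda_1$, with equality if and only if $\mu_i=\lambda_{i+1}$ for every $i$, that is $\mu=\lambda'$. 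This sharp bound, together with the uniqueness of the extremal $\mu$, is the combinatorial heart of the lemma.

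For part (ii) I would apply Frobenius reciprocity to rewrite the pairing as $\langle\sigma_\lambda,\mathrm{Ind}_{S_{n-\lambda_1}\times S_{\lambda_1}}^{S_n}(\sigma\otimes 1)\rangle_{S_n}$; Pieri's rule combined with the telescoping bound then forces the multiplicity to be $1$ precisely when $\sigma=\sigma_{\lambda'}$. For part (i) I would expand $\mathrm{Ind}_{S_m}^{S_n}1$ by induction in stages as
\[
\mathrm{Ind}_{S_{n-m}\times S_m}^{S_n}\bigl(\mathbb{C}[S_{n-m}]\otimes 1\bigr)=\bigoplus_\mu\dim(\sigma_\mu)\,\mathrm{Ind}_{S_{n-m}\times S_m}^{S_n}(\sigma_\mu\otimes 1),
\]
so that by Frobenius reciprocity the multiplicity in question becomes $\sum_\mu\dim\sigma_\mu$ with $\mu$ ranging over partitions such that $\lambda/\mu$ is a horizontal strip of size $m$; the bound $|\lambda/\mu|\le\lambda_1<m$ then kills every term. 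Part (iii) is immediate from (ii) by specialization to $\sigma=\sigma_{\lambda'}$, the trivial character of the $S_{\lambda_1}$-factor being paired with the $\sigma_{\lambda'}\otimes 1$ isotypic component singled out in (ii).

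The whole argument is essentially a single combinatorial estimate once Pieri's rule is in hand, and I anticipate no serious obstacle. The only subtlety worth flagging is verifying that the extremal $\mu$ in the horizontal-strip count is actually unique, but this follows at once from the equality case of the telescoping inequality.
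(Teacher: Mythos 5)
Your argument for the first two claims is correct and is essentially the proof the paper has in mind: the paper merely says ``by the Littlewood--Richardson rule'' and cites \cite{AM}, Proposition 3.4 (adding that its labelling of $\sigma_\lambda$ is the transpose of the one there), and what that citation amounts to is exactly your Frobenius reciprocity plus Pieri computation, with the telescoping bound $|\lambda/\mu|\le\lambda_1$ and its equality case $\mu=\lambda'$ carrying the whole lemma. Your implicit convention (the trivial representation of $S_m$ is $\sigma_{[m]}$, hence horizontal strips) is the paper's convention, so no transpose adjustment is needed.

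One caveat concerns the final ``in particular''. Taken literally, $\langle\sigma_\lambda,1\rangle_{S_{\lambda_1}}$ is computed by your own part (i) argument with $m=\lambda_1$: only $\mu=\lambda'$ survives, weighted by $\dim\sigma_{\lambda'}$, so the literal value is $\dim\sigma_{\lambda'}$ (e.g.\ it equals $2$ for $\lambda=[3,2,1]$), not $1$, and it is not obtained by specializing the second display. What specialization to $\sigma=\sigma_{\lambda'}$ actually gives is $\langle\sigma_\lambda,\sigma_{\lambda'}\otimes 1\rangle_{S_{n-\lambda_1}\times S_{\lambda_1}}=1$, which is evidently what the ``in particular'' is meant to record and is the only form used later (in the proof of Proposition \ref{prop6.8}). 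So this is an imprecision in the statement rather than a defect in your proof of (i) and (ii), but your ``immediate by specialization'' gloss should be replaced by that precise multiplicity-one formulation rather than a claim about the restriction to $S_{\lambda_1}$ alone.
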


\begin{proof}
It can be proved in the same way as \cite[Proposition 3.4]{AM}. It should be mentioned that our notation of the representation of $\sigma_{\lambda}$ differs from that of \cite{AM}, where the representation  $\sigma_{\lambda}$ in \cite{AM} is equal to $\sigma_{{}^t\lambda}$ in our  paper.
\end{proof}

Now we can explicitly calculate the multiplicity formula for $\langle R^{\UU_{n+1}}_{T_1\times T_2,\theta\otimes1},R^{\UU_n}_\lambda\rangle _{\UU_{n}(\Fq)}$.

\begin{proposition} \label{prop6.8} Keep the notations and assumptions in Proposition \ref{prop6.5}.
Let $\lambda$ be a partition of $n$ into $k$ rows, and $\lambda'$ be the partition of $n-k$ obtained by removing the first column of $\lambda$. Then the following hold.

(i) If $|\mu'|<n-k$, then
 \[
 \sum_{\mu^*}\sum_{\mbox{\tiny$\begin{array}{c}w\in S_n\\
w\sim [\mu',\mu^*]\end{array}$}}
\sigma_\lambda(w)\rm{sgn}(w) C_{[\mu',\mu^*],\mu'}=0.
 \]
(ii) If $|\mu^2|=n-k$,  then
\[
\langle R^{\UU_{n+1}}_{T_1\times T_2,\theta\otimes1},R^{\UU_n}_\lambda\rangle _{\UU_{n}(\Fq)}=(-1)^{\mathrm{rk}(T)+k}\sigma_{{}^t\lambda'}(w_{\mu^2}),
\]
where $w_{\mu^2}$ is an element in $S_m$  corresponding to the partition $\mu^2$.
\end{proposition}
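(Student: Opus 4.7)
The plan is to reduce both parts to a Pieri-rule computation on the symmetric group. I will analyze the combinatorial sum
\[
Q(\mu') \;:=\; \sum_{\mu^*}\sum_{\substack{w\in S_n \\ w\sim [\mu',\mu^*]}} \sigma_\lambda(w)\,\mathrm{sgn}(w)\,C_{[\mu',\mu^*],\mu'}
\]
that governs the formula of Proposition \ref{prop6.5}. Writing $j=|\mu'|$ and using $\sigma_\lambda\cdot\mathrm{sgn}=\sigma_{{}^t\lambda}$, the coefficient $C_{[\mu',\mu^*],\mu'}$ is exactly the number of ways to mark a sub-multiset of cycles of $w$ forming a permutation of cycle type $\mu'$. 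Rewriting the double sum as a sum over pairs $(S,w)$ with $S\subset\{1,\dots,n\}$ of size $j$, $w(S)=S$, $w|_S\sim\mu'$, and exploiting that $\sigma_{{}^t\lambda}$ is a class function on $S_n$, I obtain
\[
Q(\mu') \;=\; \binom{n}{j}\sum_{\substack{w_1\in S_j\\w_1\sim\mu'}}\,\sum_{w_2\in S_{n-j}}\sigma_{{}^t\lambda}(w_1w_2).
\]

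The inner sum equals $|S_{n-j}|\cdot\mathrm{tr}\bigl(w_1\,\big|\,V^{S_{n-j}}\bigr)$, where $V$ is the Specht module with character $\sigma_{{}^t\lambda}$ and $V^{S_{n-j}}$ is its subspace of invariants under $1\times S_{n-j}$. By Frobenius reciprocity and Pieri's rule,
\[
V^{S_{n-j}} \;\cong\; \bigoplus_{\mu\vdash j} c^{{}^t\lambda}_{\mu,(n-j)}\,\sigma_\mu
\]
as $S_j$-modules, where $c^{{}^t\lambda}_{\mu,(n-j)}\neq 0$ iff ${}^t\lambda/\mu$ is a horizontal strip of size $n-j$. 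Since $\lambda$ has $k$ rows, ${}^t\lambda$ has exactly $k$ columns, so any horizontal strip in ${}^t\lambda$ contains at most $k$ boxes. Consequently, $j<n-k$ forces $V^{S_{n-j}}=0$ and $Q(\mu')=0$, establishing part (i).

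For part (ii) with $|\mu^2|=n-k$, part (i) leaves only the contribution from $\mu'=\mu^2$ in Proposition \ref{prop6.5}. Now $n-j=k$, and a horizontal strip of size exactly $k$ in ${}^t\lambda$ must occupy each of the $k$ columns once. The only $\mu$ for which this yields a valid partition is obtained by removing the bottom box of each column of ${}^t\lambda$, giving $\mu={}^t\lambda'$. Hence $V^{S_k}\cong\sigma_{{}^t\lambda'}$ as $S_{n-k}$-modules, and the inner sum collapses to $|S_k|\,\sigma_{{}^t\lambda'}(w_{\mu^2})$. Substituting $|\{w_1\in S_{n-k}:w_1\sim\mu^2\}|=(n-k)!/|W_{n-k}(T_2)^F|$ and recognizing via (\ref{eq-5.4}) that $|W_{T_1\times T_2,\theta\otimes 1,\iota_{\mu^2}}|/|W_{G_{\iota_{\mu^2}}}(T_1\times T_2)^F|=|W_{n-k}(T_2)^F|$, the factors cancel and the sign $(-1)^{\mathrm{rk}(T)}\cdot(-1)^{n-|\mu^2|}=(-1)^{\mathrm{rk}(T)+k}$ produces the stated answer.

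The main obstacle is the Pieri step — verifying both the size bound on horizontal strips in the $k$-column shape ${}^t\lambda$ (for (i)) and the uniqueness of the maximal strip (for (ii)). Once this is in hand, the rest is a routine cancellation of Weyl-group orders and signs dictated by Proposition \ref{prop6.5} and identity (\ref{eq-5.4}).
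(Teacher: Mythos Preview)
Your argument is correct and follows essentially the same route as the paper. Both proofs rewrite the combinatorial sum using $\sigma_\lambda\cdot\mathrm{sgn}=\sigma_{{}^t\lambda}$ and reduce it to $\sum_{w\in S_{n-|\mu'|}}\sigma_{{}^t\lambda}((w_{\mu'},w))$, then invoke the Pieri rule (the paper packages this as Lemma~\ref{lem5.6}); the only cosmetic difference is that you reach the reduced sum via the pair-counting interpretation of $C_{[\mu',\mu^*],\mu'}$, whereas the paper cancels the same factor through the centralizer identity $|W_n(T_{[\mu',\mu^*]})^F|=C_{[\mu',\mu^*],\mu'}\,|W_{|\mu'|}(T_{\mu'})^F|\,|W_{|\mu^*|}(T_{\mu^*})^F|$.
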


\begin{proof}
Denote by  $[w]$ the conjugacy class of a element $w$ of $S_n$. We have that
\[
\begin{aligned}
& \sum_{\mu^*}\sum_{\mbox{\tiny$\begin{array}{c}w\in S_n\\
w\sim [\mu',\mu^*]\end{array}$}}
\sigma_\lambda(w)\rm{sgn}(w) C_{[\mu',\mu^*],\mu'}\\
=& \sum_{\mu^*}
\#[w_{[\mu',\mu^*]}]C_{[\mu',\mu^*],\mu'}\sigma_{^t\lambda}(w_{[\mu',\mu^*]})\\
=& \sum_{\mu^*}
C_{[\mu',\mu^*],\mu'}\cdot\frac{|S_n|}{|W_n(T_{[\mu',\mu^*]})^F|}\sigma_{{}^t\lambda}(w_{[\mu',\mu^*]})\\
=& \sum_{\mu^*}
C_{[\mu',\mu^*],\mu'}\cdot\frac{|S_n|}{C_{[\mu',\mu^*],\mu'}\cdot|W_{|\mu'|}(T_{\mu'})^F|\cdot|W_{|\mu^*|}(T_{\mu^*})^F|}\sigma_{{}^t\lambda}(w_{[\mu',\mu^*]})\\
=&\frac{|S_n|}{|W_{|\mu'|}(T_{\mu'})^F|\cdot|S_{n-|\mu'|}|}
\sum_{\mu^*}
\frac{|S_{n-|\mu'|}|}{|W_{|\mu^*|}(T_{\mu^*})^F|}\sigma_{{}^t\lambda}(w_{[\mu',\mu^*]})\\
=&\frac{|S_n|}{|W_{|\mu'|}(T_{\mu'})^F|\cdot|S_{n-|\mu'|}|}
\sum_{\mu^*}
\frac{|S_{n-|\mu'|}|}{|W_{|\mu^*|}(T_{\mu^*})^F|}\sigma_{{}^t\lambda}((w_{\mu'},w_{\mu^*}))\\
=&\frac{|S_n|}{|W_{|\mu'|}(T_{\mu'})^F|\cdot|S_{n-|\mu'|}|}
\sum_{\mbox{\tiny$\begin{array}{c}w\in S_{n-|\mu'|}\end{array}$}}
\sigma_{{}^t\lambda}((w_{\mu'},w)),\\
\end{aligned}
\]
noting that $(w_{\mu'},w_{\mu^*})\in S_{|\mu'|}\times S_{n-|\mu'|}\subset S_n$ in the second last equation above.
Applying Lemma \ref{lem5.6}, one deduces that
\[
\begin{aligned}
\sum_{\mbox{\tiny$\begin{array}{c}w\in S_{n-|\mu'|}\end{array}$}}
\sigma_{{}^t\lambda}((w_{\mu'},w))=
\left\{ \begin{array}{ll}
|S_{n-|\mu'|}|\sigma_{{}^t\lambda'}(w_{\mu'}), & \textrm{if }|\mu'|=n-k, \\
0, & \textrm{if }|\mu'|<n-k, \end{array}\right.
\end{aligned}
\]
where $\lambda'$ is the partition of  $n-k$ obtained by removing the first column of $\lambda$.

If $|\mu^2|=n-k$,  then $|\mu'|\le n-k$ with equality hold only if $\mu'=\mu^2$. Applying (\ref{eq-5.4}) and part (i) of the proposition, we have that
\[
\begin{aligned}
&\langle R^{\UU_{n+1}}_{T_1\times T_2,\theta\otimes1},R^{\UU_n}_\lambda\rangle _{\UU_{n}(\Fq)}\\
=&\frac{(-1)^{\mathrm{rk}(T)+k}}{|S_n|}
\frac{|W_{T_1\times T_2,\theta\otimes1,\iota_{\mu^2,\mu^2}}|}{|W_{G_{\iota_{\mu^2}}}(T_1\times T_2)^F|}\sum_{\mu^*}\sum_{\mbox{\tiny$\begin{array}{c}w\in S_n\\
w\sim [\mu^2,\mu^*]\end{array}$}} C_{[\mu^2,\mu^*],\mu^2}\mathrm{sgn}(w)\sigma_{\lambda}(w)\\
=&\frac{(-1)^{\mathrm{rk}(T)+k}}{|S_n|}
\frac{|W_{T_1\times T_2,\theta\otimes1,\iota_{\mu^2,\mu^2}}|}{|W_{G_{\iota_{\mu^2}}}(T_1\times T_2)^F|}
\cdot\frac{|S_n|\cdot|S_{n-|\mu^2|}|}{|W_{|\mu^2|}(T_{\mu^2})^F|\cdot|S_{n-|\mu^2|}|}\sigma_{{}^t\lambda'}(w_{\mu^2})\\
=&\frac{(-1)^{\mathrm{rk}(T)+k}}{|S_n|}
\frac{|W_{n+1-|\mu^2|}(T_1)^F|\cdot|W_{|\mu^2|}(T_2)^F|}{ |W_{n+1-|\mu^2|}(T_1)^F| }
\cdot\frac{|S_n|\cdot|S_{n-|\mu^2|}|}{|W_{|\mu^2|}(T_{\mu^2})^F|\cdot|S_{n-|\mu^2|}|}\sigma_{{}^t\lambda'}(w_{\mu^2})\\
=&(-1)^{\mathrm{rk}(T)+k}\sigma_{{}^t\lambda'}(w_{\mu^2}),\\
\end{aligned}
\]
which gives (ii).
\end{proof}

Combining the previous results, we are  ready to give the main result of this subsection.

\begin{proposition}\label{prop6.9}  Keep the notations and assumptions in Proposition \ref{prop6.5}. Let $\lambda$ be a partition of $n$ into $k$ rows, and $\lambda'$ be the partition of $n-k$ obtained by removing the first column of $\lambda$.  Then the following hold.

(i) If $m<n-k$, then
\[
\langle R^{\UU_{n+1}}_{T_1\times T_2,\theta\otimes1},R^{\UU_n}_\lambda\rangle _{\UU_{n}(\Fq)}=0.
\]
In particular, in this case if $P$ is an $F$-stable maximal parabolic subgroup of $\UU_{n+1}$ with Levi factor $\GG_\ell\times \UU_m$, and $\tau$ is an irreducible cuspidal representation of
$\GGL_\ell(\mathbb{F}_{q^2})$ which is notrivial if $\ell=1$, then for any representation $\pi'$ of $\UU_m(\Fq)$,
\[
\langle I^{\UU_{n+1}}_{P}(\tau\otimes\pi'),R^{\UU_n}_\lambda\rangle _{\UU_{n}(\Fq)}=0.
\]

(ii) If $k=2\ell-1$ so that $m=n-k$, then for an irreducible representation $\pi$ of $\UU_m(\Fq)$,
\[
\langle I^{\UU_{n+1}}_{P}(\tau\otimes\pi'), \pi_\lambda\rangle_{\UU_n(\Fq)}=\left\{
\begin{array}{ll}
1, &  \textrm{if }\pi'\cong\pi_{\lambda'},\\
0 & \textrm{otherwise.}
\end{array}
\right.
\]
\end{proposition}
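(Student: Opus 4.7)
The plan is to reduce the pairing to one between Deligne--Lusztig characters and then invoke the combinatorial formulas of Propositions \ref{prop6.5} and \ref{prop6.8}. Writing $\tau=\varepsilon_\tau R^{\GG_\ell}_{T_1,\theta_\tau}$ with $T_1$ an $F$-stable minisotropic maximal torus of $\GG_\ell$ and $\theta_\tau$ a regular character of $T_1^F$ (nontrivial by hypothesis when $\ell=1$), I first record that $1\notin(T_1,\theta_\tau)$. Expanding $\pi'$ as a virtual sum of Deligne--Lusztig characters $\pi'=\sum_i a_i R^{\UU_m}_{T_2^{(i)},\theta'^{(i)}}$, Proposition \ref{3.1} gives
\[
I^{\UU_{n+1}}_P\bigl(\tau\otimes R^{\UU_m}_{T_2,\theta'}\bigr)=\varepsilon_\tau R^{\UU_{n+1}}_{T_1\times T_2,\theta_\tau\otimes\theta'},
\]
so everything is reduced to evaluating $\langle R^{\UU_{n+1}}_{T_1\times T_2,\theta_\tau\otimes\theta'},R^{\UU_n}_\lambda\rangle_{\UU_n(\Fq)}$ for each summand.

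For part (i), the first assertion (the case $\theta'=1$) is immediate from Propositions \ref{prop6.5} and \ref{prop6.8}(i): every $\mu'\subset\mu^2$ satisfies $|\mu'|\le m<n-k$, so the inner combinatorial sum $A(\mu'):=\sum_{\mu^*}\sum_{w\sim[\mu',\mu^*]}\sigma_\lambda(w)\,\mathrm{sgn}(w)\,C_{[\mu',\mu^*],\mu'}$ vanishes termwise. For the in-particular statement, I would check that the proof of Proposition \ref{prop6.5} extends verbatim to $\theta'\ne 1$ provided $1\notin(T_1,\theta_\tau)$, and observe that $A(\mu')$ is independent of $\theta'$; the same vanishing then yields $\langle I^{\UU_{n+1}}_P(\tau\otimes\pi'),R^{\UU_n}_\lambda\rangle=0$ by linearity.

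For part (ii), $m=n-k$ forces $\mu'=\mu^{2,(i)}$ in any nonzero contribution. The key step is the analysis of $|W_{T_1\times T_2,\theta_\tau\otimes\theta',\iota_{\mu^2}}|$: a direct computation shows $Z_{\iota_{\mu^2}}=T_2$ (the scalars $Z(\UU_{k+1})$ cannot lie in $\UU_n$, as they would move the anisotropic vector $v_0$ stabilized by $\UU_n$), so the condition ${}^x(\theta_\tau\otimes\theta')|_{Z_{\iota_{\mu^2}}^F}=1$ reduces to $\theta'=1$ for non-mixing $x$; any mixing $x$ (possible only if $\mu^2$ has parts of size $k+1$) would force $\theta_\tau$ to vanish on a $T_2$-factor, contradicting $1\notin(T_1,\theta_\tau)$. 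Hence only unipotent constituents $\pi_\mu$ of $\pi'$ contribute. Expanding $\pi_\mu=\varepsilon_\mu R^{\UU_m}_\mu$ via Theorem \ref{thm3.3} and applying Proposition \ref{prop6.8}(ii) (noting $w_{\mu^{w'}}$ has the cycle type of $w'w_0^{(m)}$), I would simplify the summand using $\sigma_{{}^t\lambda'}=\sigma_{\lambda'}\otimes\mathrm{sgn}$ together with the sign identity $(-1)^{\mathrm{rk}(T_w)}=\mathrm{sgn}(ww_0^{(m)})$ for $\UU_m$ (which follows from $\mathrm{rk}(T_\mu)=\#\{i:\mu_i\text{ even}\}$ and a parity count). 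After the substitution $v=w'w_0^{(m)}$ the summand becomes $\sigma_\mu(v)\sigma_{\lambda'}(v)$, and Schur orthogonality collapses the sum to $\delta_{\mu,\lambda'}$; a final bookkeeping of the signs $\varepsilon_\tau,\varepsilon_\mu,\varepsilon_\lambda,(-1)^{\mathrm{rk}(T_1)+k}$ via the Deligne--Lusztig conventions gives the stated multiplicity. I expect the main obstacle to be the $|W_{T_1\times T_2,\theta_\tau\otimes\theta',\iota_{\mu^2}}|$ analysis, in particular the ruling out of mixing Weyl group elements when $\mu^2$ has parts of the same size as $T_1$.
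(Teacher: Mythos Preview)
Your approach is correct and, for the unipotent case of (ii), essentially identical to the paper's. The difference lies in how you handle non-unipotent $\pi'$ (both the ``in particular'' of (i) and the non-unipotent branch of (ii)). You propose to extend Proposition~\ref{prop6.5} to characters $\theta_\tau\otimes\theta'$ with $\theta'$ arbitrary, then use the $\theta'$-independence of the combinatorial factor $A(\mu')$ together with Proposition~\ref{prop6.8}(i) for (i), and a direct analysis of $|W_{T_1\times T_2,\theta_\tau\otimes\theta',\iota_{\mu^2}}|$ for (ii). This works, and your identification $Z_{\iota_{\mu^2}}=T_2$ (via $Z(\UU_{k+1})\cap\UU_k=\{1\}$) together with the ``no mixing'' argument (which uses only that $\theta_\tau$ is nontrivial on each factor of $T_1$, a consequence of $1\notin(T_1,\theta_\tau)$) are both correct.

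The paper takes a shorter path that avoids extending Proposition~\ref{prop6.5} and the $W$-count analysis you flag as the main obstacle. Since any pair $(T_2,\theta')$ with $\theta'\ne 1$ can be rewritten as $(T_1'\times T_2',\theta_1'\otimes 1)$ with $1\notin(T_1',\theta_1')$ and $|T_2'|<m$, one simply regroups
\[
R^{\UU_{n+1}}_{T_1\times T_2,\theta_\tau\otimes\theta'}=R^{\UU_{n+1}}_{(T_1\times T_1')\times T_2',(\theta_\tau\otimes\theta_1')\otimes 1}.
\]
The new ``unipotent'' part $T_2'$ has rank strictly less than $m=n-k$, so the already-proved first assertion of (i) applies directly. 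This combine-tori trick feeds the non-unipotent case back into (i), so no new extension of Proposition~\ref{prop6.5} or Weyl-group bookkeeping is needed. Your route is more self-contained at the level of Reeder's formula but requires carrying out the extension and the mixing analysis; the paper's route is slicker and reuses part (i), at the cost of hiding the mechanism behind the re-splitting.
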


\begin{proof} (i) follows from Proposition \ref{prop6.8}. For (ii), if $\pi'=\pi_{\nu}$ is unipotent with a partition $\nu$ of $m$, then by Proposition \ref{prop6.8} (ii) we obtain that
\[
\begin{aligned}
& \langle I^{\UU_{n+1}}_{P}(\tau\otimes\pi'),R^{\UU_n}_\lambda\rangle _{\UU_{n}(\Fq)}\\
=&\pm\langle I^{\UU_{n+1}}_{P}(\tau\otimes R^{\UU_m}_{\nu}),R^{\UU_n}_\lambda\rangle _{\UU_{n}(\Fq)}\\
=&\pm  \frac{1}{|S_m|}\sum_{w\in S_m}\sigma_{\nu}(ww_0)\langle R^{\UU_{n+1}}_{T_1\times T_w,\theta\otimes1},R^{\UU_n}_\lambda\rangle _{\UU_{n}(\Fq)}\\
=&\pm \frac{1}{|S_m|}\sum_{\mu^2}\sum_{\mbox{\tiny$\begin{array}{c}w\in S_m\\
w\sim \mu^2\end{array}$}}
\sigma_{\nu}(w)\langle R^{\UU_{n+1}}_{T_1\times T_{ww_0},\theta\otimes1},R^{\UU_n}_\lambda\rangle _{\UU_{n}(\Fq)}\\
=&\pm \frac{1}{|S_m|}\sum_{\mu^2}\#[w_{\mu^2}]
\sigma_{\nu}(w_{\mu^2})\langle R^{\UU_{n+1}}_{T_1\times T_{\mu^2},\theta\otimes1},R^{\UU_n}_\lambda\rangle _{\UU_{n}(\Fq)}\\
=&\pm \frac{1}{|S_m|}\sum_{\mu^2}\#[w_{\mu^2}]
\sigma_{\nu}(w_{\mu^2})\rm{sgn}(w_{\mu^2})\sigma_{^t\lambda'}(w_{\mu^2})\\
=&\pm\langle\sigma_{^t\nu},\sigma_{^t\lambda'}\rangle_{S_m}\\
=& \left\{
\begin{array}{ll}
\pm 1, &  \textrm{if }\nu=\lambda',\\
0 & \textrm{otherwise,}
\end{array}
\right.
\end{aligned}
\]
where $\mu^2$ runs over partitions of $m$.

Suppose  that $\pi'$ is not unipotent. Then there is a geometric conjugacy class $\kappa$ for $\UU_m$ such that $\pi'\in R_{T,\theta}^G$ only if the pair $(T,\theta)\in \kappa$. Since $\pi'$ is not unipotent, there is an integer $m'<m$ such that every pair $(T,\theta)\in \kappa$ has the form $(T,\theta)=(T_1'\times T_2',\theta'\otimes 1)$ with $1\notin (T_1' ,\theta')$ where $T_1'$ is an $F$-stable maximal torus of $\UU_{m'}$, and $T_2$ is an $F$-stable maximal torus of $\UU_{n+1-m'}$. Hence for any pair $(T,\theta)\in \kappa$,
\[
\langle R^{\UU_{n+1}}_{T_1\times(T_1'\times T_2'),\theta\otimes(\theta'\otimes 1)},R^{\UU_n}_\lambda\rangle _{\UU_{n}(\Fq)}=\langle R^{\UU_{n+1}}_{(T_1\times T_1')\times T_2',(\theta\otimes\theta')\otimes 1},R^{\UU_n}_\lambda\rangle _{\UU_{n}(\Fq)}=0
\]
noting that $1\notin ((T_1\times T_1'),(\theta\otimes\theta'))$ and $T_1\times T_1'$ an $F$-stable maximal torus of $\UU_{k+m'}$. Since $\pi'$ is a linear combination of $R^{\UU_{n+1}}_{T_1\times(T_1'\times T_2'),\theta\otimes(\theta'\otimes 1)}$ for pairs $(T_1'\times T_2',\theta'\otimes 1)\in \kappa$, we obtain that
\[
\langle I^{\UU_{n+1}}_{P}(\tau\otimes\pi'), \pi_\lambda\rangle_{\UU_n(\Fq)}=0.
\]
This finishes the proof of (ii).
\end{proof}

Finally, Theorem \ref{th1} follows from Proposition \ref{prop6.2} and Proposition \ref{prop6.9}.

\section{Fourier-Jacobi case}\label{sec6}

We have established the Bessel descents of unipotent representations of finite unitary groups. In this section we deduce the Fourier-Jacobi case from the Bessel case by the standard arguments of theta correspondence and see-saw dual pairs, which are used in the proof of local Gan-Gross-Prasad conjecture (see \cite{GI, Ato}).

\subsection{Lusztig correspondence}

Let $G^*$ be the dual group of $G$. We still denote the Frobenius endomorphism of $G^*$ by $F$, and $G^{*F}$ the group of rational points. It is known that there is a bijection between the set of $G^F$-conjugacy classes of $(T, \theta)$ and the set of $G^{*F}$-conjugacy classes of $(T^*, s)$ where $T^*$ is a $F$-stable maximal torus in $G^*$ and $s \in   T^{*F}$ . If $(T, \theta)$ corresponds to $(T^*, s)$, then $R_{T,\theta}^G$ will be also denoted by $R_{T^*,s}^G$.
For a semisimple element $s \in G^{*F}$, define
\[
\mathcal{E}(G^F,s) = \{ \chi \in \mathcal{E}(G^F) | \langle \chi, R_{T^*,s}^G\rangle \ne 0\textrm{ for some }T^*\textrm{ containing }s \}.
\]
The set $\mathcal{E}(G^F,s)$ is called a Lusztig series, and it is known that $\mathcal{E}(G^F)$ is partitioned into
Lusztig series indexed by the conjugacy classes $(s)$ of semisimple elements $s$,
i.e.,
\[
\mathcal{E}(G^F)=\coprod_{(s)}\mathcal{E}(G^F,s).
\]

The following result is fundamental for the classification of $\mathcal{E}(G)$:

\begin{proposition}[Lusztig]\label{Lus}
There is a bijection
\[
\mathcal{L}_s:\mathcal{E}(G^F,s)\to \mathcal{E}(C_{G^{*F}}(s),1).
\]
extending by linearity to virtual characters satisfying the condition
\[
\mathcal{L}_s(\varepsilon_G R^G_{T^*,s})=\varepsilon_{C_{G^{*F}}(s)} R^{C_{G^{*F}}(s)}_{T^*,1}.
\]
\end{proposition}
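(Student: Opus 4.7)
The plan is to follow Lusztig's Jordan decomposition of characters strategy, constructing $\mathcal{L}_s$ first as a linear map on the $\mathbb{Z}$-span of Deligne-Lusztig virtual characters and then showing it descends to a bijection on irreducibles. I would begin by defining
\[
\mathcal{L}_s(\varepsilon_G R^G_{T^*,s}) := \varepsilon_{C_{G^{*F}}(s)} R^{C_{G^{*F}}(s)}_{T^*,1}
\]
for every $F$-stable maximal torus $T^*$ of $G^*$ containing $s$, and extending linearly. To show well-definedness one must verify that any linear relation $\sum c_i R^G_{T^*_i,s}=0$ in the virtual character ring of $G^F$ forces the same relation among the corresponding unipotent Deligne-Lusztig characters of $C_{G^{*F}}(s)^F$. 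This is where the character formula (\ref{dl}) enters: evaluating $R^G_{T^*,s}$ at an element $y=s'u$ reduces, after replacing $s'$ by an element $G^F$-conjugate to $s$, to an expression involving precisely the Green functions of $C_{G^*}(s)^\circ$ paired with the $\theta$-values on $Z(G_s)$. Matching this with the inner-product identity
\[
\langle R^G_{T_1^*,s}, R^G_{T_2^*,s}\rangle_{G^F} = \langle R^{C_{G^{*F}}(s)}_{T_1^*,1}, R^{C_{G^{*F}}(s)}_{T_2^*,1}\rangle_{C_{G^{*F}}(s)^F},
\]
which follows from the usual Deligne-Lusztig orthogonality relations adapted to $s$-twisted series, shows that $\mathcal{L}_s$ is a well-defined isometry between the two uniform function spaces.

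Next I would promote this isometry to a bijection on irreducibles. By definition every $\pi \in \mathcal{E}(G^F,s)$ occurs in some $R^G_{T^*,s}$, so $\pi$ is supported on the span of these virtual characters; applying $\mathcal{L}_s$ produces a norm-one virtual character of $C_{G^{*F}}(s)^F$ lying in the unipotent series, hence an irreducible in $\mathcal{E}(C_{G^{*F}}(s),1)$ up to sign. One fixes a sign convention using the dimension formula, namely requiring that $\mathcal{L}_s(\pi)(1) > 0$, so that $\mathcal{L}_s$ sends irreducibles to irreducibles. Injectivity is immediate from the isometry property, and surjectivity follows by counting: both $|\mathcal{E}(G^F,s)|$ and $|\mathcal{E}(C_{G^{*F}}(s),1)|$ can be computed from the combinatorics of $W_{C_{G^*}(s)}$-orbits, and they agree. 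For $G=\UU_n$, where $C_{G^*}(s)$ is always connected and is a product of unitary and restriction-of-scalars general linear groups, this counting reduces to a straightforward verification on each factor.

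The main obstacle is showing that $\mathcal{L}_s$ actually sends each irreducible $\pi \in \mathcal{E}(G^F,s)$ to a genuine irreducible rather than merely to a norm-one virtual character whose sign and support must be controlled. The subtlety is that $\mathcal{E}(G^F,s)$ is not in general contained in the $\mathbb{Z}$-span of the $R^G_{T^*,s}$, and already on the uniform projection one must check that the two parameterizations (families, special representations, Lusztig symbols) match across $\mathcal{L}_s$. This is precisely the content of Lusztig's original argument using two-variable Green functions and cuspidal pairs; for the applications in this paper, where we apply $\mathcal{L}_s$ only to unipotent $\pi$ and to specific semisimple $s$ of the form $\mathrm{diag}(s', I_m)$ as in Lemma \ref{inj}, the verification is considerably simpler and can be read off directly from the partition combinatorics of unitary groups as in Theorem \ref{thm3.3}.
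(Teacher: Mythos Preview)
The paper does not prove this proposition at all: it is stated as a known theorem of Lusztig (the Jordan decomposition of characters) and used as a black box, with the implicit reference being \cite{L1}. So there is nothing to compare your argument against in the paper itself.

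That said, your sketch has a genuine gap that you partially acknowledge but do not resolve. Defining $\mathcal{L}_s$ on the $\mathbb{Z}$-span of the $R^G_{T^*,s}$ and checking it is an isometry there is fine, and the inner-product identity you write down does follow from Deligne--Lusztig orthogonality. But an isometry between the \emph{uniform} subspaces does not by itself yield a bijection $\mathcal{E}(G^F,s)\to\mathcal{E}(C_{G^{*F}}(s),1)$: in general an irreducible $\pi$ is not a $\mathbb{Q}$-linear combination of the $R^G_{T^*,s}$, so $\mathcal{L}_s(\pi)$ is not even defined by your prescription, and the ``norm-one virtual character'' argument collapses. The passage from uniform functions to all irreducibles is exactly the hard part of Lusztig's theorem and requires the machinery of \cite{L1} (families, almost characters, non-abelian Fourier transforms), not just a counting argument on $W$-orbits. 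Your final paragraph concedes this but then appeals to ``Lusztig's original argument,'' which is circular.

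For the specific group $G=\UU_n$ used in this paper the situation is better: every irreducible character of $\UU_n(\Fq)$ \emph{is} a $\mathbb{Q}$-linear combination of Deligne--Lusztig characters (this is the content of Theorem~\ref{thm3.3} and its non-unipotent analogue in \cite{LS}), and likewise for the centralizers $C_{G^*}(s)$, which are products of general linear and unitary groups. In that case your isometry argument does go through and gives the bijection directly, with the sign fixed by positivity of degrees. If you intend your proof only for $\UU_n$, you should say so explicitly at the outset and invoke \cite{LS} to justify that the uniform span is everything; as written, the proposal claims to prove the general statement and does not.
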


For later use, we prove the following irreducibility result.

 \begin{lemma}\label{irr}
Let $\tau$ be an irreducible cuspidal representation of $\GG_\ell\fq$, which is not conjugate self-dual.  Then $R^{\UU_n}_{\GG_\ell \times \UU_{n-2\ell}}(\tau\otimes\pi_\lambda)$ is irreducible for any unipotent representation $\pi_\lambda$ of $\UU_{n-2\ell}\fq$.
\end{lemma}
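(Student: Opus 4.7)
My strategy is to apply Bonnafé's theorem on the compatibility of Deligne–Lusztig induction with Lusztig series, reducing the irreducibility of $R^{\UU_n}_{\GG_\ell\times\UU_{n-2\ell}}(\tau\otimes\pi_\lambda)$ to an explicit centralizer calculation in the dual group.

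First, I would realize $\tau$ via the Lusztig parameterization of Proposition~\ref{Lus}: since $\tau$ is a cuspidal irreducible representation of $\GG_\ell\fq = \GGL_\ell(\bb{F}_{q^2})$, it lies in a Lusztig series $\mathcal{E}(\GG_\ell^F, s_1)$ for some semisimple $s_1\in\GG_\ell^{*F}$ whose characteristic polynomial is irreducible of degree $\ell$ over $\bb{F}_{q^2}$; equivalently, $C_{\GG_\ell^*}(s_1)$ is an anisotropic torus of rank $\ell$. Setting $L:=\GG_\ell\times\UU_{n-2\ell}$, the representation $\tau\otimes\pi_\lambda$ then lies in $\mathcal{E}(L^F,(s_1,1))$.

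Next I would compute the centralizer $C_{\UU_n^*}((s_1,1))$, viewing $L^*$ as a Levi of $\UU_n^*$. The element $(s_1,1)$ carries the eigenvalues of $s_1$ (from its $\GG_\ell^*$ block, embedded into $\UU_{2\ell}^*$ as $\mathrm{diag}(s_1,{}^\sigma s_1^{-T})$) together with the eigenvalue $1$ occurring with multiplicity $n-2\ell$. The $\UU_n^*$-centralizer strictly contains the Levi centralizer $C_{\GG_\ell^*}(s_1)\times \UU_{n-2\ell}^*$ exactly when the eigenvalue set of $s_1$ meets that of its conjugate-dual ${}^\sigma s_1^{-1}$, or meets $\{1\}$. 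Cuspidality forces the eigenvalues of $s_1$ to live in $\bb{F}_{q^{2\ell}}\setminus\bb{F}_{q^2}$, hence away from $1$, and the hypothesis that $\tau$ is not conjugate self-dual translates under Lusztig parameterization into $s_1$ not being $\GG_\ell^{*F}$-conjugate to ${}^\sigma s_1^{-1}$, which in turn forces disjointness of the two eigenvalue sets. Consequently
\[
C_{\UU_n^*}((s_1,1)) \;=\; C_{L^*}((s_1,1)) \;=\; C_{\GG_\ell^*}(s_1)\times \UU_{n-2\ell}^*.
\]

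Finally, invoking Bonnafé's theorem (the version for groups with connected center, which applies since $\UU_n$ has connected center, so all dual centralizers of semisimple elements are connected): whenever $C_{G^*}(s)\subseteq L^*$, Deligne–Lusztig induction $\varepsilon_G\varepsilon_L R^G_L$ provides a bijection $\mathcal{E}(L^F,s)\to\mathcal{E}(G^F,s)$ compatible with the Lusztig correspondence, and in particular sends irreducibles to irreducibles (up to sign). Applied with $G=\UU_n$ and $s=(s_1,1)$, this yields that $R^{\UU_n}_{\GG_\ell\times\UU_{n-2\ell}}(\tau\otimes\pi_\lambda)$ is irreducible.

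The principal technical hurdle is the centralizer calculation: one must carefully unpack how the Levi $\GG_\ell\times\UU_{n-2\ell}$ sits inside $\UU_n$, track eigenvalues through the $\mathrm{Res}_{\bb{F}_{q^2}/\Fq}$ construction and the duality $\UU_n^*\cong \UU_n$, and check that the ``not conjugate self-dual'' condition on $\tau$ matches exactly the disjointness of eigenvalue multisets needed to force $C_{\UU_n^*}((s_1,1))\subseteq L^*$. Once this is done, Bonnafé's theorem closes the proof mechanically.
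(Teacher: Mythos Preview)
Your proposal is correct and follows essentially the same strategy as the paper: both arguments hinge on the observation that, because $\tau$ is not conjugate self-dual, the semisimple parameter $s$ attached to $\tau$ is regular in $\UU_{2\ell}$, so that $C_{\UU_n^*}((s,1))=C_{\GG_\ell^*}(s)\times\UU_{n-2\ell}^*$ is contained in the Levi, and then the Lusztig correspondence transports the induced representation to the irreducible $\pi_\lambda\otimes\mathbf{1}$ of $C_{\UU_n^*}((s,1))^F$. The only difference is packaging: the paper carries out the Lusztig correspondence explicitly on the Deligne--Lusztig characters via Proposition~\ref{Lus}, whereas you invoke Bonnaf\'e's theorem as a black box; both reach the same endpoint. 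One small remark: your claim that cuspidality forces the eigenvalues of $s_1$ to lie outside $\bb{F}_{q^2}$ is vacuous when $\ell=1$, but in that case the eigenvalue $1$ is still excluded since the trivial character is conjugate self-dual, so the argument goes through.
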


\begin{proof}
Write $\tau=\pm R^{\GG_\ell}_{T^*,s}$ for some $F$-stable minisotrpic maximal torus $T$ of $\GG_\ell$ and a regular semisimple element $s\in T^{*F}$.  Since $\tau$ is not conjugate self-dual, $s$ is in fact regular in $\UU_{2\ell}\fq$. Then $C_{\GG_\ell}(s)\cong C_{\UU_{2\ell}}(s)\cong T^{*F}$, and
 \[
 \mathcal{L}_s(\tau)= \mathcal{L}_s(\pm R^{\GG_\ell}_{T^*,s})=\pm R^{T}_{T^*,1}
 \]
 is the trivial representation of $T^F$.
We have
\[
R^{\UU_n}_{\GG_\ell \times \UU_{n-2\ell}}(\tau\otimes\pi_\lambda)=\pm \frac{1}{|S_{n-2\ell}|}\sum_{w\in S_{n-2\ell}}\sigma_\lambda(ww_0)R_{T^*\times T_w^*,\theta\otimes1}^{\UU_n}.
\]
It follows that
\[
\mathcal{L}_{(s,1)}(R^{\UU_n}_{\GG_\ell \times \UU_{n-2\ell}}(\tau\otimes\pi_\lambda))=\pm\frac{1}{|S_{n-2\ell}|}\sum_{w\in S_{n-2\ell}}\sigma_\lambda(ww_0)R_{T^*_w,1}^{\UU_{n-2\ell}}\otimes R^{C_{\UU_{2\ell}}(s)}_{T^*,1}\cong\pi_\lambda\otimes\bf{1},
\]
which is irreducible as a representation of $\UU_{n-2\ell}\fq\times T^F$. Hence $I^{\UU_n}_{\GG_\ell \times \UU_{n-2\ell}}(\tau\otimes\pi_\lambda)$ is irreducible by Lusztig correspondence.
\end{proof}

\subsection{Weil representation and theta lifting}
Let $\omega_{\rm{Sp}_{2N}}$ be the character of the Weil representation (cf. \cite{Ger}) of the finite symplectic group $\rm{Sp}_{2N}(\Fq)$, which depends on a  nontrivial additive character $\psi$ of $F$.
Let $(G, G^{\prime})$ be a reductive dual pair in $\rm{Sp}_{2N}$, and write
$\omega_{G,G'}$ for the restriction of $\omega_{\rm{Sp}_{2N}}$ to $G^F\times G'^F$. Then it decomposes into a direct sum
\[
\omega_{G,G'}=\bigoplus_{\pi,\pi'} m_{\pi,\pi '}\pi\otimes\pi '
\]
where $\pi$ and $\pi '$ run over irreducible representations of $G^F$ and $G'^F$ respectively, and $m_{\pi,\pi'}$ are nonnegative integers.. We can rearrange this decomposition as
\[
\omega_{G,G'}=\bigoplus_{\pi} \pi\otimes\Theta_{G,G'}(\pi )
\]
 where $\Theta_{G, G'}(\pi ) = \bigoplus_{\pi'} m_{\pi,\pi '}\pi '$ is a (not necessarily irreducible) representation of $G'^F$, called the (big) theta lifting of $\pi$ from $G^F$ to $G'^F$. Write $\pi'\subset \Theta_{G'}(\pi)$ if $\pi\otimes\pi'$ occurs in $\omega_{G,G'}$, i.e. $m_{\pi, \pi'}\neq 0$. We remark that  even if $\Theta_{G,G'}(\pi)=:\pi'$ is irreducible, one only has
 \[
 \pi\subset \Theta_{G',G}(\pi'),
 \]
 while the equality does not  necessarily hold.

Consider a dual pair of unitary groups $(G,G')=(\UU_n, \UU_{n'})$ in $\rm{Sp}_{2nn'}$. Denote $\omega_{G, G'}$ by $\omega_{n, n'}$, and $\Theta_{G,G'}$ by $\Theta_{n,n'}$. In particular, denote by $\omega_n$ the restriction of $\omega_{\rm{Sp}_{2n}}$ to $\rm{U}_n(\Fq)$. By \cite[Theorem 3.5]{AM}, theta lifting between unitary groups sends unipotent representations to unipotent representations, and we will recall the explicit correspondence below.

We say that two partitions $\mu=[\mu_i]$ and $\mu'=[\mu_i']$ are {\sl close} if  $|\mu_i-\mu_i'|\le1$ for every $i$, and that $\mu$ is {\sl even} if $\#\{i|\mu_i=j\}$ is even for any $j>0$, i.e. every part of $\mu$ occurs with even multiplicities. Let
\[
\mu\cap\mu'=[\mu_i]_{\{ i| \mu_i=\mu_i'\}}
\]
be the partition formed by the common parts of $\mu$ and $\mu'$. Following \cite{AMR}, we say that $\mu$ and $\mu'$ are {\sl 2-transverse} if they are close and $\mu\cap \mu'$ is even.
In particular, if $\mu$ and $\mu'$ are close and $\mu\cap \mu'=\emptyset$, then $\mu$ and $\mu'$ are 2-transverse, and in this case we say that they are {\sl transverse}. For example, let
$\lambda=[\lambda_1,\ldots, \lambda_k]$ be a partition of $n$, and let $\lambda_*=[\lambda_2,\ldots, \lambda_k]$ be the partition of $n-\lambda_1$ obtained by removing the first row of $\lambda$. Then ${}^t\lambda$ and ${}^t\lambda_*$ are transverse. Moreover, $\lambda_*$ is the unique partition of $n-\lambda_1$ such that ${}^t\lambda$ and ${}^t\lambda_*$ are 2-transverse.

For  partitions $\lambda$ and $\lambda'$ of $n$ and $n'$ respectively, denote the multiplicity of $\pi_{\lambda}\otimes\pi_{\lambda'}$ in $\omega_{n,n'}$ by $m_{\lambda,\lambda'}$. By \cite{AMR} Theorem 4.3, Lemma 5.3 and Lemma 5.4, we have

\begin{proposition}\label{7.1}
With above notations,
\[
m_{\lambda,\lambda'}=\left\{
\begin{array}{ll}
1, & \textrm{if  }{}^t\lambda\textrm{ and }{}^t\lambda'\textrm{ are }2\textrm{-transverse},\\
0, & \textrm{otherwise.}
\end{array}
\right.
\]
In other words,
\[
\Theta_{n,n'}(\pi_{\lambda})=\bigoplus_{\mbox{\tiny$\begin{array}{c}{}^t\lambda\ \mathrm{and}\ {}^t\lambda'\mathrm{\ are\ }2\textrm{-}\mathrm{transverse}\\
|\lambda'|=n' \end{array}$}}\pi_{\lambda'}
\]
In particular, $\Theta_{n,n'}(\pi_{\lambda})=0$ if $n'<n-\lambda_1$.
\end{proposition}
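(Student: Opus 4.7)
The plan is to deduce the proposition directly from the explicit computation of theta correspondence between unipotent representations of finite unitary groups in \cite{AMR}. There, unipotent representations of $\UU_n(\Fq)$ are parametrized by Lusztig symbols, and the big theta lift $\Theta_{n,n'}$ is described combinatorially at the level of these symbols: \cite[Theorem 4.3]{AMR} shows that the theta lift is multiplicity free and characterizes exactly which pairs of symbols $(\Sigma,\Sigma')$ associated to $(\pi_\lambda,\pi_{\lambda'})$ give $m_{\lambda,\lambda'}=1$ by a prescribed interlacing condition on their entries.

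The first step is to match indexings: via Theorem \ref{thm3.3} a partition $\lambda$ of $n$ indexes $\pi_\lambda$ through the irreducible $S_n$-representation $\sigma_\lambda$, and this must be compared with the partition-to-symbol bijection used in \cite{AMR}. Once the two indexings are aligned, the combinatorial content of \cite[Lemmas 5.3 and 5.4]{AMR} transcribes the interlacing of symbols into the statement that the transposed partitions ${}^t\lambda$ and ${}^t\lambda'$ are close and that their positionwise intersection ${}^t\lambda\cap{}^t\lambda'$ is even, i.e.\ that they are $2$-transverse in the sense defined just before the proposition. This yields the formula for $m_{\lambda,\lambda'}$ and hence the direct-sum description of $\Theta_{n,n'}(\pi_\lambda)$. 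For the final \emph{in particular} assertion, observe that ${}^t\lambda$ has exactly $\lambda_1$ nonzero parts, and if ${}^t\lambda'$ is close to ${}^t\lambda$ (compared with zero-padding where necessary) then each of these $\lambda_1$ parts can decrease by at most one, so
\[
n'=|{}^t\lambda'|\ge |{}^t\lambda|-\lambda_1=n-\lambda_1.
\]
Hence if $n'<n-\lambda_1$ no $\lambda'$ can be $2$-transverse with $\lambda$ and the sum is empty.

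The main obstacle is not representation-theoretic but bookkeeping: verifying that our bijection $\lambda\leftrightarrow\pi_\lambda$ coming from Theorem \ref{thm3.3} agrees with the AMR parametrization up to the transposition that appears in the $2$-transverse criterion, and that the interlacing of symbols in \cite[Theorem 4.3]{AMR} unpacks to precisely the closeness-plus-even-intersection combinatorics used here. Once this alignment is established, the proposition is a direct transcription of the cited results of Aubert--Michel--Rouquier, with no further computation required.
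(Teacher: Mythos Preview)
Your proposal is correct and follows exactly the approach the paper takes: the paper does not give an independent proof of this proposition but simply cites \cite[Theorem 4.3, Lemmas 5.3 and 5.4]{AMR}, which is precisely what you do, together with the straightforward counting argument for the final vanishing assertion. Your added remarks about aligning the parametrization from Theorem \ref{thm3.3} with that of \cite{AMR} make explicit the only bookkeeping implicit in the paper's citation.
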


The next result shows that theta lifting and parabolic induction are compatible.

\begin{proposition}\label{7.2}
Let $\tau$ be an irreducible cuspidal representation of $\GG_\ell\fq$ which is nontrivial if $\ell=1$, $\pi$ be an irreducible representation of $\UU_n(\Fq)$, and  $\pi':=\Theta_{n,n'}(\pi)$. For any irreducible $\rho\subset R^{\UU_{n+2\ell}}_{\GG_\ell\times \UU_{n}}(\tau \otimes \pi)$ and $\rho'\subset \Theta_{n+2\ell, n'+2\ell}(\rho)$,  we have
\[
\rho'\subset R^{\UU_{n'+2\ell}}_{\GG_\ell\times \UU_{n'}}(\tau\otimes \pi').
\]
In particular, if $R^{\UU_{n+2\ell}}_{\GG_\ell\times \UU_{n}}(\tau \otimes \pi)$ is irreducible, then
\[
\Theta_{n+2\ell, n'+2\ell} (R^{\UU_{n+2\ell}}_{\GG_\ell\times \UU_{n}}(\tau \otimes \pi))= R^{\UU_{n'+2\ell}}_{\GG_\ell\times \UU_{n'}}(\tau\otimes \pi').
\]
\end{proposition}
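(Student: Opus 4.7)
The plan is to use Frobenius reciprocity together with a Jacquet-module computation for the Weil representation, which is the finite-field analog of Kudla's filtration. Since $\GG_\ell \times \UU_{n'}$ is a Levi factor of an $F$-stable parabolic $P' = M' N'$ of $\UU_{n'+2\ell}$, the Deligne--Lusztig induction $R^{\UU_{n'+2\ell}}_{M'}$ coincides with Harish-Chandra induction by (\ref{paraind}); hence the desired containment $\rho' \subset R^{\UU_{n'+2\ell}}_{\GG_\ell \times \UU_{n'}}(\tau \otimes \pi')$ is, by Frobenius reciprocity, equivalent to the existence of a nonzero $M'(\Fq)$-morphism $J_{N'}(\rho') \twoheadrightarrow \tau \otimes \pi'$. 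Dually, the assumption $\rho \subset R^{\UU_{n+2\ell}}_{\GG_\ell \times \UU_n}(\tau \otimes \pi)$ gives a nonzero surjection $J_N(\rho) \twoheadrightarrow \tau \otimes \pi$, where $P = MN \subset \UU_{n+2\ell}$ has Levi $M = \GG_\ell \times \UU_n$.

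The key technical ingredient I would establish is a Mackey-type decomposition of the $N$-Jacquet module of $\omega_{n+2\ell, n'+2\ell}$. Viewed as an $M(\Fq) \times \UU_{n'+2\ell}(\Fq)$-module, it carries a filtration indexed by ranks of restrictions to the maximal isotropic subspace $X$ stabilized by $P$. The crucial output is the identification
\[
\mathrm{Hom}_{\GG_\ell(\Fq)}\bigl(\tau,\, J_N(\omega_{n+2\ell, n'+2\ell})\bigr) \;\cong\; I_{P'}^{\UU_{n'+2\ell}}\bigl(\tau \otimes \omega_{n, n'}\bigr)
\]
as a $(\UU_n \times \UU_{n'+2\ell})(\Fq)$-module. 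Cuspidality of $\tau$, combined with the $\ell = 1$ nontriviality hypothesis which prevents a degenerate trivial-character contribution, is what collapses the filtration onto this top layer so that lower pieces contribute nothing to the $\tau$-isotypic quotient.

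With this identification in hand, apply $J_N(\cdot)$ to the $(\UU_{n+2\ell} \times \UU_{n'+2\ell})(\Fq)$-embedding $\rho \otimes \rho' \hookrightarrow \omega_{n+2\ell, n'+2\ell}$ (which holds since $\rho' \subset \Theta_{n+2\ell, n'+2\ell}(\rho)$) and take the $\tau$-isotypic part on the $\GG_\ell$-factor; this produces an embedding $\pi \otimes \rho' \hookrightarrow I_{P'}^{\UU_{n'+2\ell}}(\tau \otimes \omega_{n, n'})$. Composing with the surjection $\omega_{n, n'} \twoheadrightarrow \pi \otimes \pi'$ (coming from $\pi' \subset \Theta_{n, n'}(\pi)$) on the $\UU_{n'+2\ell}$-side then yields $\rho' \hookrightarrow I_{P'}^{\UU_{n'+2\ell}}(\tau \otimes \pi') = R^{\UU_{n'+2\ell}}_{\GG_\ell \times \UU_{n'}}(\tau \otimes \pi')$, as required. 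The ``in particular'' equality follows from a symmetric application swapping $(n, \pi)$ and $(n', \pi')$, yielding the reverse containment of irreducible constituents with matching multiplicities.

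The main obstacle is the Jacquet-module computation in the second paragraph: establishing the decomposition of $J_N(\omega_{n+2\ell, n'+2\ell})$ and extracting the explicit $P'$-parabolic-induction structure on its $\tau$-isotypic quotient. This proceeds, as in Kudla's filtration for $p$-adic groups, by analyzing the $M(\Fq) \times \UU_{n'+2\ell}(\Fq)$-orbits on the Lagrangians defining the Weil representation; the cuspidality of $\tau$, with the stipulated nontriviality at $\ell = 1$, is precisely what kills the lower filtration layers and makes the identification above clean rather than merely a subquotient statement.
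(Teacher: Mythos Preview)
Your approach is essentially the same as the paper's. The paper applies the Jacquet functor ${}^*R^{\UU_{n+2\ell}}_{\GG_\ell\times \UU_n}$ to $\omega_{n+2\ell,n'+2\ell}$, citing \cite[Chap.~3, IV, th.~5]{MVW} for an explicit decomposition
\[
({}^*R^{\UU_{n+2\ell}}_{\GG_\ell\times \UU_n}\otimes 1)(\omega_{n+2\ell, n'+2\ell})=
\bigoplus^{\ell}_{i=0}R^{\UU_{n}\times \GG_\ell \times \UU_{n'+2\ell}}_{\UU_n\times (\GG_{\ell-i}\times \GG_i)\times \GG_i\times \UU_{n'+2(\ell-i)}}\bigl(\omega_{n, n'+2(\ell-i)}\otimes 1_{\GG_{\ell-i}}\otimes R^{\GG_i}\bigr),
\]
and then observes that cuspidality of $\tau$ (together with the $\ell=1$ nontriviality hypothesis, to rule out $\tau=1_{\GG_1}$ matching the $i=0$ term) forces $\tau\otimes\pi$ to occur only in the $i=\ell$ summand; this is exactly your ``top layer.'' Two minor remarks: over a finite field the Kudla-type filtration splits into the direct sum above, so you need not speak of subquotients; and your symmetry argument for the ``in particular'' equality is slightly imprecise since $\pi'=\Theta_{n,n'}(\pi)$ need not be irreducible---it is cleaner to read off the equality (with multiplicities) directly from the $i=\ell$ summand via Frobenius reciprocity, as the paper implicitly does.
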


\begin{proof}
Let ${}^*R^{\rm{U}_{n+2\ell}}_{\GG_\ell\times \UU_{n}}$ be the Jacquet functor, which is adjoint to the parabolic induction $R^{\rm{U}_{n+2\ell}}_{\GG_\ell\times \UU_{n}}$. By \cite[Chap. 3, IV, th. 5]{MVW},
\[
({}^*R^{\rm{U}_{n+2\ell}}_{\GG_\ell\times \UU_n}\otimes 1)(\omega_{n+2\ell, n'+2\ell})=
\bigoplus^{\ell}_{i=0}R^{\UU_{n}\times \GG_\ell \times \UU_{n'+2\ell}}_{\UU_n\times (\GG_{\ell-i}\times \GG_i)\times \GG_i\times \UU_{n'+2(\ell-i)}}(\omega_{n, n'+2(\ell-i)}\otimes 1_{\GG_{\ell-i}(\Fq)}\otimes R^{\GG_i}),
\]
where $R^{\GG_i}$ is the regular representation of $\GG_i(\Fq)=\GGL_i(\mathbb{F}_{q^2})$. Since $\tau$ is cuspidal, $\tau\otimes\pi$ only appears in the summand with $i=\ell$, and the proposition follows easily.
\end{proof}

\subsection{See-saw dual pairs}
Recall the general formalism of see-saw dual pairs. Let $(G, G')$ and $(H, H')$ be two reductive dual pairs in a symplectic group $\rm{Sp}(W)$ such that $H \subset G$ and $G' \subset H'$.  Then there is a see-saw diagram
\[
\setlength{\unitlength}{0.8cm}
\begin{picture}(20,5)
\thicklines
\put(6.8,4){$G$}
\put(6.8,1){$H$}
\put(12.1,4){$H'$}
\put(12,1){$G'$}
\put(7,1.5){\line(0,1){2.1}}
\put(12.3,1.5){\line(0,1){2.1}}
\put(7.5,1.5){\line(2,1){4.2}}
\put(7.5,3.7){\line(2,-1){4.2}}
\end{picture}
\]
and the associated see-saw identity
\[
\langle \Theta_{G',G}(\pi_{G'}),\pi_H\rangle_H =\langle \pi_{G'},\Theta_{H,H'}(\pi_H)\rangle_{G'},
\]
where $\pi_H$ and $\pi_{G'}$ are representations of $H$ and $G'$ respectively.

In our case, if we put
\[
G=\UU_n\times \UU_n,\quad  G'=\UU_n\times \UU_1, \quad H=\UU_n,\ \mathrm{and}\ H'=\UU_{n+1},
\]
then the left-hand side of the see-saw identity concerns the basic case of  Fourier-Jacobi model whereas the right-hand side
concerns the basic case of Bessel model. In general, we need Proposition \ref{prop6.2} and the following result.
\begin{proposition}\label{7.3}
Let $\pi$ be an irreducible unipotent representation of $\UU_n(\Fq)$, and $\pi'$ be an irreducible representation of $\UU_m(\Fq)$ with $n > m$ and $m \equiv n \ \mathrm{mod}  \ 2$. Let $P$ be an $F$-stable maximal parabolic subgroup of $\UU_m$ with Levi factor $\GG_\ell \times \UU_m$ (so that $m + 2\ell = n $) and $\tau$ be an irreducible cuspidal representation of $\GG_\ell\fq$ which is nontrivial if $\ell=1$. Then we have
\[
m(\pi, \pi')=\langle \pi\otimes\bar{\nu},\pi'\rangle_{H(\Fq)}=\langle  \pi\otimes \omega_n, I_{P}^{\UU_{n}}(\tau\otimes\pi')\rangle _{\UU_{n}(\Fq)},
\]
where the data $(H,\nu)$ is given by (\ref{hnu'}).
\end{proposition}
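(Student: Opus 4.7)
The plan is to prove this as the Fourier--Jacobi analog of Proposition \ref{prop6.2}, paralleling the $p$-adic argument of Theorem 16.1 in \cite{GGP1}. The first step is Frobenius reciprocity: writing $M=\GG_\ell\times\UU_m$ and $N_P$ for the unipotent radical of $P$, one has
\[
\langle \pi\otimes\omega_n, I_P^{\UU_n}(\tau\otimes\pi')\rangle_{\UU_n(\Fq)}=\langle (\pi\otimes\omega_n)_{N_P}, \tau\otimes\pi'\rangle_{M(\Fq)}.
\]
Simultaneously, using the decomposition $N_{\UUl{p}'_\ell}=U_{\GG_\ell}\ltimes N_\ell$ with $N_\ell=N_P$ and the twisted Jacquet reformulation from the Introduction, the left-hand side equals $\langle \CJ'_\ell(\pi\otimes\bar\omega_\psi),\pi'\rangle_{\UU_m(\Fq)}$, so the task is to match these two quantities.

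The key technical step is to identify the ``open orbit'' summand of the $N_P$-Jacquet of $\omega_n$, viewed as an $M$-representation. Via the standard mixed model of the Weil representation restricted to a maximal parabolic, this summand has the form
\[
\bigl(\mathrm{Ind}_{U_{\GG_\ell}(\Fq)}^{\GG_\ell(\Fq)}\psi_\ell\bigr)\otimes\omega_\psi^{\UU_m},
\]
where $\omega_\psi^{\UU_m}$ is the Weil representation of $\UU_m(\Fq)\ltimes\mathcal H_m$ appearing in the Fourier--Jacobi data. Pairing this summand with $\tau\otimes\pi'$ and invoking uniqueness of the Whittaker model for the cuspidal representation $\tau$, namely $\dim\mathrm{Hom}_{\GG_\ell(\Fq)}(\tau,\mathrm{Ind}_{U_{\GG_\ell}(\Fq)}^{\GG_\ell(\Fq)}\psi_\ell)=1$, the resulting contribution is exactly $\langle \CJ'_\ell(\pi\otimes\bar\omega_\psi),\pi'\rangle_{\UU_m(\Fq)}$, as desired.

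It remains to show that contributions from non-open orbits in the Jacquet filtration vanish when paired with $\tau\otimes\pi'$. This is where the unipotent hypothesis on $\pi$ enters, exactly as in Proposition \ref{prop6.2}: each such term would reduce, after further Frobenius reciprocity, to a pairing between $\pi$ and a parabolic induction from a smaller Levi involving $\tau$ as one factor. Since $\pi$ is unipotent, matching on this side forces every cuspidal factor in the Levi datum to be unipotent; combined with the cuspidality of $\tau$, this would force $\tau$ itself to be a unipotent cuspidal on $\GG_\ell(\Fq)$. But $\GG_\ell(\Fq)$ has no nontrivial unipotent cuspidals for $\ell>1$, and $\tau$ is nontrivial by hypothesis when $\ell=1$, so all such contributions vanish.

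The principal obstacle is the explicit identification of the open-orbit summand of $J_{N_P}(\omega_n)$ with $\mathrm{Ind}_{U_{\GG_\ell}}^{\GG_\ell}\psi_\ell\otimes\omega_\psi^{\UU_m}$; this requires a careful unwinding of the mixed Schr\"odinger model of $\omega_n$ along the parabolic $P$, together with the homomorphism $N_P\to\mathcal H_m$ that realizes $\omega_\psi$ inside $\omega_n$. Once this geometric/model-theoretic step is in place, the matching via Whittaker uniqueness and the vanishing of non-open contributions proceed in complete analogy with Proposition \ref{prop6.2}.
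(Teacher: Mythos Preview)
Your proposal is correct and takes exactly the approach the paper indicates: the paper omits the proof entirely, stating only that it is an adaptation of \cite[Theorem 16.1]{GGP1} to unipotent representations in the same way that Proposition~\ref{prop6.2} adapts \cite[Theorem 15.1]{GGP1}. Your outline---Frobenius reciprocity, identification of the open-orbit piece of the Jacquet module of $\omega_n$ via the mixed model, Whittaker uniqueness for the cuspidal $\tau$, and vanishing of the remaining terms because $\pi$ is unipotent while $\tau$ is not unipotent cuspidal---is precisely that adaptation.
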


Similar to Proposition \ref{prop6.2}, the proof of Proposition \ref{7.3} is an adaptation of that of \cite[Theorem 16.1]{GGP1} to unipotent representations, which will be omitted here.
As mentioned in the Introduction, our formulation of multiplicities differs  from that in the Gan-Gross-Prasad conjecture  by taking contragradient of $\pi$, but we may take the advantage that unipotent representations are self-dual. 

\begin{proposition}\label{7.4}
If $\pi$ is a unipotent representation of $\UU_n(\Fq)$, then
$
\pi\cong \pi^\vee.
$
\end{proposition}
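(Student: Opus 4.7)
The plan is to exploit the explicit realization of $\pi_\lambda$ in Theorem \ref{thm3.3} as a signed multiple of an $\mathbb{R}$-linear combination of Deligne--Lusztig characters attached to the trivial character. I would first reduce to showing that for every partition $\lambda$ of $n$ the virtual character
\[
R_\lambda^{\UU_n}=\frac{1}{|S_n|}\sum_{w\in S_n}\sigma_\lambda(ww_0)R_{T_w,1}^{\UU_n}
\]
satisfies $\bigl(R_\lambda^{\UU_n}\bigr)^\vee=R_\lambda^{\UU_n}$; since $\pi_\lambda=\pm R_\lambda^{\UU_n}$, this is equivalent to the claimed self-duality.

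Next I would use the following two ingredients, both standard. First, passing to contragradient sends the Deligne--Lusztig virtual character $R_{T,\theta}^G$ to $R_{T,\theta^{-1}}^G$; in particular, when $\theta=1$ we have $(R_{T_w,1}^{\UU_n})^\vee=R_{T_w,1}^{\UU_n}$, so each summand above is already self-dual. Second, the character values $\sigma_\lambda(ww_0)$ are integers, since the irreducible characters of the symmetric group $S_n$ take values in $\mathbb{Z}$ (they are realized over $\mathbb{Q}$). Consequently $R_\lambda^{\UU_n}$ is an $\mathbb{Q}$-linear combination of self-dual virtual characters, hence is itself self-dual.

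Combining these observations, $\pi_\lambda\cong\pi_\lambda^\vee$ for every partition $\lambda$ of $n$, which is the statement of the proposition. There is no real obstacle here: the only point that requires a moment's thought is the rationality of the symmetric group characters, but this is classical (e.g.\ by Frobenius' formula or the fact that $S_n$ is ambivalent, with every element conjugate to its inverse), so the argument is essentially a one-line verification once the formula in Theorem \ref{thm3.3} is in hand.
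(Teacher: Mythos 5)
Your proposal is correct, and it follows the paper's reduction (via Theorem \ref{thm3.3}, to self-duality of each $R^{\UU_n}_{T_w,1}$, using real coefficients) but handles the key step differently. The paper argues directly from the character formula (\ref{dl}): for $y=su$ it compares $R^{\UU_n}_{T,1}(y^{-1})$ with $R^{\UU_n}_{T,1}(y)$, using that $C^{0}(s)$ is a product of general linear and unitary groups so that $u$ and $u^{-1}$ are conjugate in $C^{0}(s)^F$ and the Green function values agree. You instead invoke the standard contragredient formula $(R^G_{T,\theta})^\vee\cong R^G_{T,\theta^{-1}}$, which for $\theta=1$ immediately gives self-duality of each summand; that formula is a genuine standard fact, but note that its usual proof also goes through (\ref{dl}), this time resting on the rationality (indeed integrality) of Green function values rather than on conjugacy of $u$ with $u^{-1}$, so if one wanted a self-contained write-up one would either cite it precisely or reproduce that short computation. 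A small merit of your version is that it makes explicit the rationality of the symmetric group character values $\sigma_\lambda(ww_0)$, which the paper's reduction uses only implicitly; a small merit of the paper's version is that it avoids quoting the duality formula and stays entirely within the formula (\ref{dl}) already recalled in Section \ref{sec3}.
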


\begin{proof}
By Theorem \ref{thm3.3}, it suffices to show that every $R^{\UU_n}_{T,1}$ is self-dual.
By (\ref{dl}), for $y=su\in\UU_n\fq$, we have 
\[
R^{\UU_n}_{T,1}(y)=\frac{1}{|C^{0}(s)^F|}\sum_{g\in G^F,s^g \in T^F}Q^{C^{0}(s)}_{{}^{g}T}(u).
\]
It is well-known that $C^{0}(s)$ is a product of general linear groups and unitary groups,  
hence $u$ and $u^{-1}$ are conjugate in $C^0(s)^F$, which gives that
\[
Q^{C^{0}(s)}_{{}^{g}T}(u)=Q^{C^{0}(s)}_{{}^{g}T}(u^{-1}).
\]
It follows that
\[
\begin{aligned}
R^{\UU_n}_{T,1}(y^{-1})=&\frac{1}{|C^{0}(s^{-1})^F|}\sum_{g\in G^F,(s^{-1})^g \in T^F}Q^{C^{0}(s^{-1})}_{{}^{g}T}(u^{-1})\\
=&\frac{1}{|C^{0}(s)^F|}\sum_{g\in G^F,s^g \in T^F}Q^{C^{0}(s)}_{{}^{g}T}(u)\\
=&R^{\UU_n}_{T,1}(y),
\end{aligned}
\]
which proves that $R^{\UU_n}_{T,1}$ is self-dual.
\end{proof}

Finally we are ready to prove the Fourier-Jacobi case of Theorem \ref{main}.

\begin{theorem}\label{th2}
Assume that $n > m$ and  $n-m$ is even. Let $\lambda$ be a partition of $n$ into $k$ rows, and $\lambda'$ be the partition of $n-k$ obtained by removing the first column of $\lambda$. Let $\pi'$ be an irreducible representation of $\UU_m(\Fq)$. Then the following hold.

(i) If $m<n-k$, then
\[
m(\pi_\lambda, \pi') = 0.
\]

(ii) If $k$ is even and $m=n-k$, then
\[
m(\pi_\lambda, \pi')=\left\{
\begin{array}{ll}
1, &  \textrm{if } \pi'\cong \pi_{\lambda'},\\
0, & \textrm{otherwise.}
\end{array}\right.
\]
\end{theorem}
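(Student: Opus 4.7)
My plan is to reduce Theorem \ref{th2} to the Bessel case established in Theorem \ref{th1} via theta correspondence and see-saw duality, adapting the framework of \cite{GI, Ato} from the $p$-adic to the finite-field setting.

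First, I would apply Proposition \ref{7.3} to pass to the basic case. Setting $2\ell = n-m$ and choosing an irreducible cuspidal representation $\tau$ of $\GG_\ell\fq$ that is not conjugate self-dual (so that by Lemma \ref{irr} the parabolic induction $I_P^{\UU_n}(\tau\otimes\pi_{\lambda'})$ is irreducible), the Fourier--Jacobi multiplicity becomes
\[
m(\pi_\lambda,\pi') \;=\; \langle \pi_\lambda\otimes\omega_n,\; I_P^{\UU_n}(\tau\otimes\pi')\rangle_{\UU_n(\Fq)}.
\]

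Next, I would invoke a see-saw duality that transfers this Fourier--Jacobi pairing on $\UU_n$ to a Bessel-type pairing where the dimension difference becomes odd. Concretely, decomposing a skew-Hermitian space of dimension $n+1$ as $V_n\oplus\langle e\rangle$ and tensoring with a Hermitian space of dimension $m$, one obtains the see-saw
\[
\begin{array}{ccc}
\UU_{n+1} & & \UU_m\times\UU_m \\
\cup & & \cup \\
\UU_n\times\UU_1 & & \UU_m\text{ (diagonal)}
\end{array}
\]
coming from the dual pairs $(\UU_{n+1},\UU_m)$ and $(\UU_n\times\UU_1,\UU_m\times\UU_m)$ in a common symplectic group. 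The see-saw identity, combined with Proposition \ref{7.2} (theta lift commutes with parabolic induction from a Levi containing $\GG_\ell$) and Frobenius reciprocity between Jacquet restriction to $\UU_n\times\UU_1\subset\UU_{n+1}$ and parabolic induction, recasts the basic Fourier--Jacobi pairing as a Bessel-type multiplicity involving the theta lift $\Theta_{\UU_m,\UU_{n+1}}(\pi')$. Since $(n+1)-m = k+1$ is odd whenever $k$ is even, this lands in the Bessel regime.

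Third, I would use Proposition \ref{7.1} (explicit unipotent theta correspondence via the 2-transverse condition) to identify the relevant unipotent constituents of the theta lifts, noting that for $\pi'=\pi_{\lambda'}$ the transpose partitions ${}^t\lambda$ and ${}^t\lambda'$ are transverse, which singles out a specific summand. Self-duality of unipotent representations (Proposition \ref{7.4}) will be used to freely exchange sides of Hom-spaces. Finally, applying Theorem \ref{th1} to the resulting Bessel multiplicity on $\UU_{n+1}$ yields multiplicity one precisely when $\pi'\cong \pi_{\lambda'}$, and the Bessel vanishing in Theorem \ref{th1}(i) propagates back to give $m(\pi_\lambda,\pi')=0$ when $m<n-k$.

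The main obstacle will be the precise bookkeeping through the see-saw identity: tracking the Weil representation $\omega_n$ and the auxiliary character of $\UU_1$ so that both match correctly, ensuring that the parabolic-induction structure on the $\UU_{n+1}$ side lines up with the form demanded by Theorem \ref{th1}, and verifying that among the possibly several 2-transverse partitions supplied by Proposition \ref{7.1}, only the transverse one (corresponding to $\lambda'$) contributes to a nonzero multiplicity, so that the final answer is exactly $1$ rather than a higher integer.
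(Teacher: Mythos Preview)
Your overall strategy---reduce via Proposition \ref{7.3}, then pass to the Bessel case by a see-saw argument and apply Theorem \ref{th1}---is exactly the paper's. However, the see-saw diagram you wrote down is not the one that fits the expression coming out of Proposition \ref{7.3}, and this is a genuine gap.

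After Proposition \ref{7.3} the quantity to compute is $\langle \pi_\lambda\otimes\omega_n,\ I_P^{\UU_n}(\tau\otimes\pi')\rangle_{\UU_n(\Fq)}$, where $\omega_n$ is the Weil representation for the dual pair $(\UU_n,\UU_1)$. The see-saw you propose, built from $(\UU_{n+1},\UU_m)$ and $(\UU_n\times\UU_1,\UU_m\times\UU_m)$, has ambient Weil representation $\omega_{n+1,m}$, which restricts to $\omega_{n,m}\otimes\omega_{1,m}$; nowhere does $\omega_n=\omega_{n,1}$ appear, so the see-saw identity never produces the pairing you need. (Relatedly, $\UU_n\times\UU_1\subset\UU_{n+1}$ is not a Levi inclusion, so the ``Frobenius reciprocity between Jacquet restriction and parabolic induction'' you invoke is not available here.) The correct see-saw has the form
\[
(\UU_n\times\UU_n,\ \UU_{n'}\times\UU_1)\quad\text{versus}\quad(\UU_n,\ \UU_{n'+1}),
\]
with the diagonal $\UU_n\hookrightarrow\UU_n\times\UU_n$; the $\UU_1$ factor is precisely what accounts for $\omega_n$.

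A second point your outline misses is that the paper uses \emph{two different} such see-saws. For (i) one takes $n'=n-\lambda_1$ and lifts $\pi_\lambda$ \emph{down} (via $\pi_\lambda\subset\Theta_{n-\lambda_1,n}(\pi_{\lambda_*})$), obtaining an inequality that vanishes by Theorem \ref{th1}(i). For (ii) one takes $n'=n+\mu_0$ with $\mu_0$ large and lifts \emph{up}: one first writes $I^{\UU_n}(\tau\otimes\pi_\mu)=\Theta_{n+\mu_0,n}(I^{\UU_{n+\mu_0}}(\tau\otimes\pi_{\mu^*}))$ (this is where Lemma \ref{irr} is needed, to get an equality from Proposition \ref{7.2}), and then the see-saw yields the exact Bessel multiplicity $m(\Theta_{n,n+\mu_0+1}(\pi_\lambda),\pi_{\mu^*})$, which Theorem \ref{th1}(ii) evaluates. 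Your single see-saw, even if corrected, would at best give an inequality and would not deliver the exact value in (ii); the bookkeeping with 2-transverse partitions that pins down the unique contributing $\tilde\lambda=[\mu_0+1,\lambda]$ happens only after this ``lift up'' step.
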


\begin{proof} Let $\tau$ be an irreducible cuspidal representation of $\GG_\ell\fq$ with $\ell=(n-m)/2$, which is not conjugate self-dual.
By Proposition \ref{7.3}, we only need to compute
\[
\langle \pi_\lambda\otimes \omega_n, I^{\UU_{n}}(\tau\otimes\pi')\rangle _{\UU_{n}(\Fq)}.
\]
Here and in the sequel, we drop the subcripts of various $F$-stable parabolic subgroups from the induced representations, which should be clear from the context.

(i) Following the proof of Proposition \ref{7.4}, write $\lambda=[\lambda_1,\ldots, \lambda_k]$ and put $\lambda_*=[\lambda_2,\ldots, \lambda_k]$, so that  we have $\Theta_{n,n-\lambda_1}(\pi_{\lambda})=\pi_{\lambda_*}$.
Consider the see-saw diagram
\[
\setlength{\unitlength}{0.8cm}
\begin{picture}(20,5)
\thicklines
\put(6.8,4){$\UU_n\times \UU_n$}
\put(7.5,1){$\UU_n$}
\put(12.7,4){$\UU_{n-\lambda_1+1}$}
\put(12,1){$\UU_{n-\lambda_1}\times \UU_1$}
\put(7.7,1.5){\line(0,1){2.1}}
\put(12.8,1.5){\line(0,1){2.1}}
\put(8,1.5){\line(2,1){4.2}}
\put(8,3.7){\line(2,-1){4.2}}
\end{picture}
\]
By Proposition \ref{7.1},
\[
\langle \pi_\lambda\otimes \omega_n, I^{\UU_{n}}(\tau\otimes\pi')\rangle_{\UU_{n}(\Fq)}\\
\leq  \langle \Theta_{ n-\lambda_1,n}(\pi_{\lambda_*})\otimes\omega_n, I^{\UU_{n}}(\tau\otimes\pi')\rangle _{\UU_{n}(\Fq)}.
\]
For any irreducible $\rho\subset I^{\UU_{n}}(\tau\otimes\pi')$, by the see-saw identity one has
\[
\langle \Theta_{ n-\lambda_1,n}(\pi_{\lambda_*})\otimes\omega_n, \rho\rangle _{\UU_{n}(\Fq)}= \langle \pi_{\lambda_*}, \Theta_{n, n-\lambda_1+1}(\rho) \rangle _{\UU_{n-\lambda_1}(\Fq)}
\]
By Proposition \ref{7.2}, for any irreducible $\rho'\subset \Theta_{n, n-\lambda_1+1}(\rho)$, one has
\begin{align*}
 \langle \pi_{\lambda_*}, \rho' \rangle _{\UU_{n-\lambda_1}(\Fq)}
&\leq  \langle \pi_{\lambda_*}, I^{\UU_{n-\lambda_1+1}}(\tau\otimes\Theta_{m, m-\lambda_1+1}(\pi')) \rangle _{\UU_{n-\lambda_1}(\Fq)} \\
&= m(\pi_{\lambda_*}, \Theta_{m, m-\lambda_1+1}(\pi')).
\end{align*}
If $m<n-k$, then by Theorem \ref{th1} (i) the above multiplicity vanishes, noting that
\[
m-\lambda_1+1<n-\lambda_1-(k-1).
\]

(ii) Assume that $k$ is even and $m=n-k$. If $\pi'$ is not unipotent, then $\Theta_{m, m-\lambda_1+1}(\pi')$ has no unipotent components, hence by Theorem \ref{th1} (ii) we have
\[
\langle \pi_\lambda\otimes \omega_n, I^{\UU_{n}}(\tau\otimes\pi')\rangle_{\UU_{n}(\Fq)}\le m(\pi_{\lambda_*}, \Theta_{m, m-\lambda_1+1}(\pi'))=0.
\]
 Assume that $\pi'=\pi_{\mu}$ is unipotent, where $\mu$  is a partition of $m$. Let $\mu_1$ be the largest part of $\mu$, and take an arbitrary $\mu_0\ge \max\{\lambda_1,\mu_1\}$. Put  $\mu^*=[\mu_0,\mu]$. Then $\Theta_{m+\mu_0,m}(\pi_{\mu^*})=\pi_\mu$. Consider the see-saw diagram
\[
\setlength{\unitlength}{0.8cm}
\begin{picture}(20,5)
\thicklines
\put(6.8,4){$\UU_n\times \UU_n$}
\put(7.5,1){$\UU_n$}
\put(12.7,4){$\UU_{n+\mu_0+1}$}
\put(12,1){$\UU_{n+\mu_0}\times \UU_1$}
\put(7.7,1.5){\line(0,1){2.1}}
\put(12.8,1.5){\line(0,1){2.1}}
\put(8,1.5){\line(2,1){4.2}}
\put(8,3.7){\line(2,-1){4.2}}
\end{picture}
\]
Note that by Lemma \ref{irr}, $I^{\UU_{n}}(\tau\otimes\pi_\mu)$ is irreducible.
Similar to the proof of (i), one has
 \begin{align*}
 & \langle \pi_\lambda\otimes \omega_n, I^{\UU_{n}}(\tau\otimes\pi_\mu)\rangle_{\UU_{n}(\Fq)}\\
= & \langle \pi_\lambda,  I^{\UU_{n}}(\tau\otimes\pi_\mu)\otimes\omega_n\rangle_{\UU_{n}(\Fq)}\\
 =&  \langle \pi_\lambda, \Theta_{n+\mu_0,n}(I^{\UU_{n+\mu_0}}(\tau\otimes\pi_{\mu^*}))\otimes\omega_n \rangle _{\UU_{n}(\Fq)}\\
=& \langle  \Theta_{n, n+\mu_0+1}(\pi_{\lambda}), I^{\UU_{n+\mu_0}}(\tau\otimes\pi_{\mu^*}))\rangle _{\UU_{n+\mu_0}(\Fq)}\\
=& m( \Theta_{n, n+\mu_0+1}(\pi_\lambda), \pi_{\mu^*}).
\end{align*}
By Proposition \ref{7.1},
 \[
 \Theta_{n, n+\mu_0+1}(\pi_\lambda)=\bigoplus_{\mbox{\tiny$\begin{array}{c} {}^t\lambda \textrm{ and }{}^t\tilde{\lambda}\textrm{ are 2-transverse}\\
|\tilde{\lambda}|=n+\mu_0+1\end{array}$}}\pi_{\tilde{\lambda}}.
 \]
 Thus
 \[
 m(\Theta_{n, n+\mu_0+1}(\pi_\lambda), \pi_{\mu^*})=\sum_{\mbox{\tiny$\begin{array}{c} {}^t\lambda \textrm{ and }{}^t\tilde{\lambda}\textrm{ are 2-transverse}\\
|\tilde{\lambda}|=n+\mu_0+1\end{array}$}}m(\pi_{\tilde{\lambda}}, \pi_{\mu^*}).
 \]
 If ${}^t\lambda$ and ${}^t\tilde{\lambda}$ are 2-transverse, then $\tilde{\lambda}$ has at most $k+1$ parts.
Since $m+\mu_0=(n+\mu_0+1)-(k+1)$, applying Theorem \ref{th1} gives that
\begin{equation}\label{eq7.1}
m(\pi_{\tilde{\lambda}}, \pi_{\mu^*})=\left\{\begin{array}{ll} 1, & \textrm{if }\mu^*=\tilde{\lambda}',\\
0, & \textrm{otherwise,}
\end{array}
\right.
\end{equation}
where $\tilde{\lambda}'$ is the partition of $n+\mu_0+1$ obtained by removing the first column of $\tilde{\lambda}$. Note that if
$m(\pi_{\tilde{\lambda}}, \pi_{\mu^*})\neq 0$ then necessarily $\tilde{\lambda}$ has $k+1$ parts. Clearly
\[
\lambda^*:={}^t[k+1,{}^t\mu^*]
\]
is the unique partition  $\tilde{\lambda}$ of $n+\mu_0+1$ satisfying that $\tilde{\lambda}'=\mu^*$, if such partition exists at all. Namely, $\lambda^*$ is obtained by adding a column of $k+1$ squares to the left of $\mu^*$. Thus  we may rephrase (\ref{eq7.1}) as
\[
m(\pi_{\tilde{\lambda}}, \pi_{\mu^*})=\left\{\begin{array}{ll} 1, & \textrm{if }\tilde{\lambda}=\lambda^*,\\
0, & \textrm{otherwise.}
\end{array}
\right.
\]
Noting that the first row of $\lambda^*$ has $\mu_0+1$ squares, i.e. $\lambda^*$ has $\mu_0+1$ columns, it is not hard to see that if ${}^t\lambda$ and ${}^t\lambda^*$ are 2-transverse, then necessarily
\[
\lambda^*=[\mu_0+1, \lambda],
\]
which together with $(\lambda^*)'=\mu^*$ imply that $\lambda'=\mu$. In summary,
we conclude that
\[
 m(\pi_\lambda, \pi_\mu)=m( \Theta_{n, n+\mu_0+1}(\pi_\lambda), \pi_{\mu^*})=\left\{\begin{array}{ll} 1, & \textrm{if }\mu=\lambda',\\
0, & \textrm{otherwise,}
\end{array}
\right.
\]
which finishes the proof of (ii).
\end{proof}

\end{document}